\newtheorem{theorem}{Theorem}[section]
\newtheorem{lemma}[theorem]{Lemma}
\newtheorem{corollary}[theorem]{Corollary}
\newtheorem{proposition}[theorem]{Proposition}
\newtheorem{conjecture}{Conjecture}
\newtheorem{result}{Main Result}
\theoremstyle{definition}
\newtheorem{definition}[theorem]{Definition}
\theoremstyle{remark}
\newtheorem{remark}[theorem]{Remark}
\numberwithin{equation}{section}
\newtheorem{ex}[theorem]{Example}
\newtheorem*{notation}{Notation}
\newtheorem*{ack}{Acknowledgement}
\newcommand{\GG}{\mathscr{G}}
\newcommand{\OO}{\mathcal{O}} 
\newcommand{\tor}{\mathrm{tor}}
\newcommand{\Tr}{\mathrm{Tr}}
\newcommand{\rsw}{\operatorname{rsw}}
\newcommand{\sw}{\operatorname{Sw}}
\newcommand{\Res}{\operatorname{Res}}
\newcommand{\Spec}{\operatorname{Spec}}
\newcommand{\N}{\mathbb{N}}
\newcommand{\Z}{\mathbb{Z}}
\newcommand{\Q}{\mathbb{Q}}
\newcommand{\R}{\mathbb{R}}
\newcommand{\m}{\mathfrak{m}}
\DeclareMathOperator{\Coker}{Coker}
\begin{document}

\title{On ramification in transcendental extensions
	of local fields}

\author{Isabel Leal\thanks{Department of Mathematics, University of Chicago, 5734 S. University Avenue, Chicago, IL, 60637, USA. Electronic address: \texttt{isabel@math.uchicago.edu}}}

\date{}

\maketitle

\begin{abstract}
	Let $L/K$ be an extension of complete discrete valuation fields, and assume that the residue field of $K$ is perfect and of positive characteristic. The residue field of $L$ is not assumed to be perfect.
	
	In this paper, we prove a formula for the Swan conductor of the image of a character 
	$\chi \in H^1(K, \Q/\Z)$ in $H^1(L, \Q/\Z)$ for $\chi$ sufficiently ramified. Further, we define generalizations $\psi_{L/K}^{\mathrm{ab}}$ and  $\psi_{L/K}^{\mathrm{AS}}$ of the classical $\psi$-function and prove a formula for  $\psi_{L/K}^{\mathrm{ab}}(t)$ for sufficiently large $t\in \R$. 
\end{abstract}


\section{Introduction}

Let $K$ be a complete discrete valuation field. Classical ramification theory has extensively studied finite Galois extensions $L/K$ when the residue field of $K$ is perfect. Much progress has also been achieved when the residue field is no longer assumed to be perfect, such as K. Kato's generalization of the classical Swan conductor $\sw \chi\in \Z_{\geq0}$ for abelian characters $\chi:G(L/K)\to\Q/\Z$ (\cite{kato1989swan}) and A. Abbes and T. Saito's generalization of the upper ramification filtration $G(L/K)$ 
(\cite{abbessaito}).  Yet there are still many open questions, both when the residue field of $K$ is imperfect and when the extension $L/K$ is transcendental. 

Let $L/K$ be a finite Galois extension of complete discrete valuation fields with perfect residue fields.
Denote by  $e(L/K)$  the ramification index of $L/K$ and 
by   $D^\log_{L/K}$ the wild different of  $L/K$, i.e., $D^\log_{L/K}= D_{L/K}-e(L/K)+1$,  where $D_{L/K}$ is the different of $L/K$. 
It is classically known
that, if $\chi \in H^1(K,\Q/\Z)$ and $\chi_L$ is its image in $H^1(L,\Q/\Z)$, then, when $\sw \chi\gg0$, 
\begin{equation} \label{eq:psi}
\sw \chi_L = \psi_{L/K} (\sw\chi)= e(L/K) \sw \chi - D^\log_{L/K},
\end{equation}
where $\psi_{L/K}$ is the classical $\psi$-function 	 (see, for example, \cite{serre1979local}).

In this paper, we obtain a formula resembling (\ref{eq:psi}) for (possibly transcendental) extensions $L/K$ of complete discrete valuation fields when the residue field of $K$ is perfect but the residue field of $L$ is not necessarily perfect, and then define generalizations of the classical $\psi$-function. To be precise, we first prove the two following results, the first when $L$ is of equal positive characteristic and the second when $L$ is of mixed characteristic. Here $\hat{\Omega}^1_{\OO_L/\OO_K}(\log)$ denotes the completed $\OO_L$-module of relative differential forms with log poles, $\delta_\tor(L/K)$ the length of its torsion part, and $e_K$ the absolute ramification index of $K$. 
For a character $\chi\in H^1(L,\Q/\Z)$, $\sw \chi$ denotes Kato's Swan conductor of $\chi$ (defined in \cite{kato1989swan}).

\begin{result}[Theorem \ref{mainpositive}] \label{main re po}
	Let $L/K$ be an extension of complete discrete valuation fields of equal characteristic $p>0$. Assume that $K$ has perfect residue field and
	$\chi \in H^1(K,\Q/\Z)$ is such that
	\[\sw \chi> \dfrac{p}{p-1}\dfrac{\delta_\tor(L/K)}{e(L/K)}.\] 
	Denote  by $\chi_L$ its image in $H^1(L,\Q/\Z)$. Then
	\[\sw \chi_L= e(L/K)\sw\chi-\delta_\tor(L/K).\]
\end{result}
\begin{result}[Theorem \ref{main mixed}] \label{main re mix}
	Let $L/K$ be an  extension of complete discrete valuation fields of mixed characteristic. Assume that $K$ has perfect residue field of characteristic $p>0$ and  
	$\chi\in H^1(K,\Q/\Z)$ is such that \[\sw \chi \geq
	\dfrac{2e_K}{p-1}+\dfrac{1}{e(L/K)}+\left\lceil\dfrac{\delta_\tor(L/K)}{e(L/K)}\right\rceil.\] Denote by $\chi_L$ its image in $H^1(L,\Q/\Z)$. Then 
	\[\sw \chi_L = e(L/K) \sw \chi - \delta_\tor(L/K).\]
\end{result}

After proving these two main results, we relate this discussion to the $\psi_{L/K}$ function for $L/K$. More precisely,  we define two  $\psi$-functions $\psi_{L/K}^{\mathrm{AS}}$ and $\psi_{L/K}^{\mathrm{ab}}$ when $K$ has perfect residue field but $L$ has residue field not necessarily perfect. We then show that, in the classical case of finite $L/K$, both these definitions coincide with the classical $\psi_{L/K}$ function. Finally, we prove that we can regard our first two main theorems as formulas for $\psi_{L/K}^{\mathrm{ab}}(t)$ for $t\gg 0$:   
\begin{result}[Theorem \ref{main thm psi}] \label{main re psi}
	Let $L/K$ be an 
	extension of complete discrete valuation fields. Assume that $K$ has perfect residue field of characteristic $p>0$. Let $t\in\R_{\geq 0}$ be such that
	\[\begin{cases}
	t\geq \dfrac{2e_K}{p-1}+\dfrac{1}{e(L/K)}+\left\lceil\dfrac{\delta_\tor(L/K)}{e(L/K)}\right\rceil & \text{if $K$ is of characteristic $0$,} \\[.5cm] t> \dfrac{p}{p-1} \dfrac{\delta_\tor(L/K)}{e(L/K)} & \text{if $K$ is of characteristic $p$.}
	\end{cases}\]
	Then 
	\[\psi^{\mathrm{ab}}_{L/K}(t)= e(L/K)t-\delta_{\tor}(L/K). \]
\end{result} 

Our methods for the proof of Main Result \ref{main re po} 
differ greatly from those for the proof of Main Result \ref{main re mix}. 
In the equal characteristic case, we use Artin-Schreier-Witt theory. In the mixed characteristic case, we use M. Kurihara's exponential map (\cite{kurihara1998exponential}) and a modified version of higher dimensional local class field theory.  

We hope to apply the results of this paper to generalize a previous work (\cite{isabel1}) that studies the ramification of the action of the absolute Galois group $G_K$ on $H^1(U_{\overline{K}},\GG)$ (where $U$ is an open curve over $K$ and $\GG$ a smooth $\ell$-adic sheaf of rank $1$ on $U$) from the semi-stable case to a more general one.

The organization of this paper is the following: in Section \ref{sec:2}, we study the positive characteristic case and prove Main Result \ref{main re po}. In Section \ref{sec:3}, we introduce the discussion of the mixed characteristic case by studying the example of a two-dimensional local field whose last residue field is finite. In Section
\ref{sec gen mixed}, we study the general mixed characteristic case and prove Main Result \ref{main re mix}.  In Section \ref{sec5}, we define generalizations of the $\psi$-function when the residue field of $L$ is not necessarily perfect and prove Main Result \ref{main re psi}, which connects them with the other main results of this paper.

\begin{notation}
	Through this paper, for a complete discrete valuation field $K$, $\OO_K$ denotes its ring of integers, $\m_K$ the maximal ideal,  $\pi_K$ a prime element, and $G_K$ the absolute Galois group. Lowercase $k$  denotes the residue field of $K$, and $v_K$ the discrete valuation. We write $U_K^n=1+\m_K^n$.

	When we say that $K$ is a local field, we mean that  $K$ is a complete discrete valuation field with perfect (not necessarily finite) residue field. Similarly, when we say $K$ is a $q$-dimensional local field, we mean that there is a chain of fields  $K = K_q, K_{q-1}, \ldots, K_1, K_0$ such that, for each $1\leq i \leq q$, $K_{i}$ is a complete discrete valuation field with residue field $K_{i-1}$ and $K_0$ is a perfect field. When the last residue field $K_0$ is finite, we say that $K$ is a $q$-dimensional local field with  finite last residue field. 
	
	We write \[\hat{\Omega}^1_{\OO_K}(\log)= \varprojlim\limits_m \Omega^1_{\OO_K}(\log)/\m_K^m \Omega^1_{\OO_K}(\log),\] 
	where   
	\[
	\Omega^1_{\OO_K}(\log)= (\Omega_{\OO_K}^1\oplus(\OO_K\otimes_{\Z}K^\times))/(da - a\otimes a, \,  a \in \OO_K, \, a \neq 0).
	\]
	
	We shall denote by $P_\tor$ the torsion part of an abelian group $P$. Let $L/K$ an 
	extension of complete discrete valuation fields (of either mixed characteristic or positive characteristic $p>0$). Throughout this paper, $e(L/K)$ shall denote the ramification index of $L/K$ and $e_K$ the absolute ramification index of $K$. When $k$ is perfect,  $\delta_\tor(L/K)$ shall denote the length of $\left(\dfrac{\hat{\Omega}^1_{\OO_L}(\log)}{\OO_L\otimes_{\OO_K}\hat{\Omega}^1_{\OO_K}(\log)}\right)_\tor$.   
	
	The $r$-th Milnor $K$-group of $L$ shall be denoted by $K_r(L)$. 
	We denote by $U^nK_{r}(L)$ the subgroup of $K_{r}(L)$ generated by elements
	$\{a,b_1,\ldots,b_{r-1}\}$ where $a\in U_L^n$, $b_i\in L^\times$, and we write
	\[
	\hat{K}_{r}(L) = \varprojlim_n  K_r(L)/U^nK_r(L)	
	\]
	and
	\[
	U^n\hat{K}_{r}(L)= \varprojlim_{n'}U^nK_r(L)/U^{n'}K_r(L).
	\] 
	
	Following the notation in \cite{kato1989swan}, we write, for $A$ a ring over $\Q$ or a smooth ring over a field of characteristic $p>0$, and $n\neq 0$, 
	\[ H_n^q(A)=   H^q((\Spec A)_{\mathrm{et}}, \Z/n\Z(q-1)  )\]
	and
	\[H^q(A)= \varinjlim\limits_n   H_n^q(A).\]
\end{notation}

\section{Swan conductor in positive characteristic} \label{sec:2}

Let $L$ be complete discrete valuation field of equal characteristic $p>0$. In this section, we will study 
extensions $L/K$ where $K$ is a local field (and therefore $k$ is perfect). To be precise,   we shall 
show that, if $\chi\in H^1(K)$ has Swan conductor sufficiently large, then
\[\sw \chi_L= e\sw\chi-\delta_\tor(L/K),\]
where $\chi_L$ is the image of $\chi$ in $H^1(L)$ and $e=e(L/K)$.  For that goal, we will use valuations on differential forms and Witt vectors, as well as the notion of a Witt vector being ``best'', defined later. 

First of all, we review some concepts necessary for our discussion.\label{completed free}
By  completed free $\OO_L$-module with basis $\{e_\lambda\}_{\lambda\in\Lambda}$, we mean $\varprojlim\limits_m M/\m_L^mM$, where $M$ is the free $\OO_L$-module with basis $\{e_\lambda \}_{\lambda\in\Lambda}$. Write $L=l((\pi_L))$ for some  prime $\pi_L\in L$, where $l$ is the residue field of $L$. Let $\{b_\lambda\}_{\lambda\in \Lambda}$ be a lift of a  $p$-basis of $l$ to $\OO_L$. Then $\hat{\Omega}^1_{\OO_L}(\log)$ is the completed free $\OO_L$-module with basis $\{db_\lambda, d\log \pi_L:\lambda\in\Lambda\}$.  Write $\hat{\Omega}^1_L=L\otimes_{\OO_L}\hat{\Omega}^1_{\OO_L}(\log)$. 

Recall that, when $K$ is a local field of positive characteristic, 
$\hat{\Omega}^1_{\OO_{K}}(\log)$ is  free of rank one and, for an  
extension of complete discrete valuation fields $L/K$,    $\delta_\tor(L/K)$ is the length of the torsion part of $\hat{\Omega}^1_{\OO_L/\OO_K}(\log)$.

Denote by $W_s(L)$ the Witt vectors of length $s$. There is a homomorphism $d:W_s(L)\to \hat{\Omega}_{L}^1$ given by
\[a=(a_{s-1},\ldots, a_0)\mapsto  \sum_i a_i^{p^i-1}da_i.\]

\begin{remark} 
	In the literature, the operator $d:W_s(L)\to \hat{\Omega}_{L}^1(\log)$ is often denoted by $F^{s-1}d$. 
\end{remark}

We can define valuations on $\hat{\Omega}_{L}^1$ and $W_s(L)$ as follows. If $\omega \in 
\hat{\Omega}_{L}^1$ and $a \in W_s(L)$, let
\[
v_L^{\log} \omega = \sup\left\{n: \omega \in \pi_L^n\otimes_{\OO_L}\hat{\Omega}_{\OO_L}^1(\log)\right\},
\]
and
\[
v_L(a)=-\max_i\{- p^i v_L(a_i)\}=\min_i\{p^i v_L(a_i)\}.
\]

These valuations define increasing filtrations of   $\hat{\Omega}_{L}^1$ and $W_s(L)$ by the subgroups 
\[
F_n \hat{\Omega}_{L}^1 = \{\omega \in \hat{\Omega}_{L}^1:v_L^{\log} \omega \geq -n\}
\]
and
\[
F_n W_s(L) = \{a \in W_s(L):v_L(a) \geq -n\},
\]
respectively, where $n\in \Z_{\geq0}$. The latter filtration was defined by Brylinski in \cite{Brylinski1983}. 

By the theory of Artin-Schreier-Witt, there are isomorphisms
\[
W_s(L)/(F-1)W_s(L)\simeq H^1(L,\Z/p^s \Z),
\] 
where $F$ is the endomorphism of Frobenius. Kato defined in \cite{kato1989swan} the filtration $F_n H^1(L, \Z / p^s\Z)$
as the image of $F_n W_s(L)$ under this map. We recall that, for $\chi \in  H^1(L, \Z / p^s\Z)$, the Swan conductor $\sw \chi$ is the smallest $n$ such that $\chi \in F_n H^1(L, \Z / p^s\Z)$.

We shall now define what it means for a  Witt vector $a\in W_s(L)$ to be ``best'', as well as the notion of relevance length.

\begin{definition} \label{def best}
	Let	$a \in W_s(L)$, and $n$ be the smallest non-negative integer such that $a \in F_n W_s(L)$. 
	We say that $a$ is best if there is no $a'\in W_s(L)$ mapping to the same element as $a$ in $H^1(L, \Z / p^s\Z)$  such that $a'\in F_{n'}W_s(L)$ for some  non-negative integer $n'<n$.
\end{definition}

	When $v_L(a)\geq0$, $a$ is clearly best. When $v_L(a)<0$, $a$ is best if and only if  there are no $a', b \in W_s(L)$ satisfying
	\[
	a= a' + (F-1)b
	\]	
	and  $v_L(a)< v_L(a')$.
	
	Observe that  $a\in F_n W_s(L)\backslash F_{n-1} W_s(L)$ is best if and only if $n= \sw \chi$, where $\chi$ is the image of $a$ under $F_nW_s(L)\to H^1(L,\Z/p^s\Z)$. We remark that ``best $a$'' is not unique.

We shall start by deducing a simple criterion for determining when  $a$ is best.
When $s=1$ the characterization of ``best $a$'' is well-known: every $a\in \OO_L$ is best, and $a\in L \setminus \OO_L$ is best if and only if either $p\nmid v_L(a)$ or $p\mid v_L(a)$ but 
 $\bar{a}\notin l^p$, where $\bar{a}$ denotes the residue class of $a/\pi_L^{v_L(a)}$ for a prime element $\pi_L\in L$. In this section we will characterize best $a$ for arbitrary $s$. We shall prove that $a$ is best if and only if $a_i$ is best for some relevant position $i$, in the sense of the following definition.

\begin{definition}
	We shall say that the $i$-th position of $a$ is relevant if $v_L(a)=p^i v_L(a_i)$. Let $j=\max\{i:v_L(a)=p^i v_L(a_i)\}$. Then $j+1$ shall be called the relevance length of $a$.  
\end{definition}

\begin{lemma}\label{01}
	Let $a\in W_s(L)$ 
	be of negative valuation. We have $v_L(a)=v_L^\log (da)$ if and only if  
	there is some relevant position $k$ such that $v_L(a_k)=v_L^\log (da_k)$. 
\end{lemma}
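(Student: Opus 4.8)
The plan is to estimate the log-valuation of $da=\sum_i a_i^{p^i-1}\,da_i$ term by term, and then, for the nontrivial implication, to rule out cancellation among the leading terms. Write $m_i=v_L(a_i)$ and fix a prime $\pi_L\in L$. For each $i$ with $a_i\neq 0$, writing $a_i=u_i\pi_L^{m_i}$ with $u_i$ a unit gives $da_i=\pi_L^{m_i}(du_i+m_iu_i\,d\log\pi_L)$ with $du_i+m_iu_i\,d\log\pi_L\in\hat{\Omega}^1_{\OO_L}(\log)$; hence $v_L^{\log}(da_i)\geq m_i$, with equality exactly when the class of $du_i+m_iu_i\,d\log\pi_L$ in $\hat{\Omega}^1_{\OO_L}(\log)\otimes_{\OO_L}l$ (free over $l$ on $d\log\pi_L$ and on the differentials $d\bar b_\lambda$ of a $p$-basis of $l$) is nonzero. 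Since $a_i^{p^i-1}$ has valuation $(p^i-1)m_i$, this yields
\[v_L^{\log}\bigl(a_i^{p^i-1}\,da_i\bigr)=(p^i-1)m_i+v_L^{\log}(da_i)\geq p^im_i\geq v_L(a),\]
where the first inequality is an equality iff $v_L(a_i)=v_L^{\log}(da_i)$ and the second iff $i$ is relevant. Thus every term of $da$ has log-valuation $\geq v_L(a)$, and a term attains $v_L(a)$ precisely when its index lies in $S:=\{\,i : i\text{ is relevant and }v_L(a_i)=v_L^{\log}(da_i)\,\}$.

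The implication ``$v_L(a)=v_L^{\log}(da)\ \Rightarrow\ S\neq\emptyset$'' is then immediate from the ultrametric inequality for $v_L^{\log}$: if $S=\emptyset$, every term of $da$ has log-valuation strictly greater than $v_L(a)$, and hence so does $da$ itself, so $v_L^{\log}(da)\neq v_L(a)$.

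For the converse, assume $S\neq\emptyset$ and put $n=-v_L(a)>0$. Factor $\pi_L^{-n}$ out of $da$ and reduce modulo $\m_L$; the terms with index outside $S$ have log-valuation $>-n$ and so reduce to $0$, and it remains to show that
\[\bar\omega:=\sum_{i\in S}\bar u_i^{p^i-1}\bigl(d\bar u_i+m_i\bar u_i\,d\log\pi_L\bigr)\]
is nonzero in $\hat{\Omega}^1_{\OO_L}(\log)\otimes_{\OO_L}l$. Each summand is nonzero by the definition of $S$; the difficulty — and the main obstacle of the proof — is that several summands can attain the minimal valuation and might, a priori, cancel. Since $p^im_i=-n$ for relevant $i$, we have $m_i\equiv 0\pmod p$ unless $i=v_p(n)$, in which case $m_i=-n/p^{v_p(n)}\not\equiv 0\pmod p$. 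If $v_p(n)\in S$, then every other $i\in S$ has $m_i\equiv 0\pmod p$, so the $d\log\pi_L$-component of $\bar\omega$ is $m_{v_p(n)}\bar u_{v_p(n)}^{p^{v_p(n)}}\neq 0$, and we are done.

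It remains to treat the case $v_p(n)\notin S$, where every $i\in S$ has $m_i\equiv 0\pmod p$, so each $\bar u_i\notin l^p$ and $\bar\omega=\sum_{i\in S}\bar u_i^{p^i-1}\,d\bar u_i$ is a sum of closed $1$-forms on $l$. To exclude cancellation here I would use the Cartier operator $C$ and its identities $C(df)=0$, $C(f^{p-1}df)=df$, $C(f^p\eta)=fC(\eta)$: writing $p^i-1=(p^{i-1}-1)p+(p-1)$ gives $C\bigl(\bar u^{p^i-1}d\bar u\bigr)=\bar u^{p^{i-1}-1}d\bar u$ for $i\geq 1$, while $C(d\bar u)=0$. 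Hence, with $j=\max S$, the iterate $C^j$ annihilates every summand of index $<j$ and sends the index-$j$ summand to $d\bar u_j\neq 0$ (as $j\in S$). If $\bar\omega$ were $0$ we would get $0=C^j(\bar\omega)=d\bar u_j\neq 0$, a contradiction. Therefore $\bar\omega\neq 0$ in every case, so $v_L^{\log}(da)=-n=v_L(a)$, which completes the proof.
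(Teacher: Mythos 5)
Your proof is correct and follows essentially the same route as the paper's: isolate the terms indexed by $S$ (the paper's set $I$), observe the remaining terms have strictly larger log-valuation, reduce modulo $\m_L$, and split according to whether the $d\log\pi_L$-coefficient survives (your condition $v_p(n)\in S$ is equivalent to the paper's $p\nmid n$ after its normalization $v_L(a)=-np^j$), using the Cartier operator to exclude cancellation in the remaining case. The only difference is cosmetic: you apply $C^{\max S}$ once and read off the top term, whereas the paper iterates $C$ to conclude $\bar u_i\in l^p$ for all $i\in I$; both yield the same contradiction.
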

\begin{proof}
	Let $I$ denote the subset of $\{0,\ldots, s-1\}$ consisting of $i$ such that the $i$-th position is relevant and $v(a_i)=v_L^\log (da_i)$.
	Let $j+1$ denote the relevance length of $a$. 
	We have
	\[
	da = \sum_{i\in I} a_i^{p^i-1} da_i + \sum_{i \notin I} a_i^{p^i-1}da_i.
	\]
	Clearly 
	\[
	v_L^\log\left(\sum_{i \notin I} a_i^{p^i-1}da_i\right)>v_L(a),
	\]
	so it is enough to prove that 
	\[
	v_L^\log\left( \sum_{i\in I} a_i^{p^i-1} da_i \right)= v_L(a)
	\]
	if  $I$ is nonempty. 
	
	Assume $I$ nonempty. Since the relevance length of $a$ is $j+1$, we get that $p^j\mid v_L(a)$. We have $v_L(a)=-np^j$ for some $n\in\N$. For each $i\in I$, we have $v_L(a_i)=-np^{j-i}$. Write $a_i= \pi_L^{-np^{j-i}} u_i$, where $u_i\in\OO_L$ is a unit. 
	
	Then 
	\[
	\sum_{i\in I} a_i^{p^i-1} da_i=  \pi_L^{-np^j}\left(\sum_{i\in I} u_i^{p^i-1} du_i -
	n u_j^{p^j} \frac{d\pi_L}{\pi_L}\right).
	\]
	If $p\nmid n$, then 
	\[
	v_L^\log\left( \sum_{i\in I} a_i^{p^i-1} da_i \right)= v_L(a).
	\]
	On the other hand, if $p\mid n$, 
	\[
	\sum_{i\in I} a_i^{p^i-1} da_i=  \pi_L^{-np^j}\sum_{i\in I} u_i^{p^i-1} du_i.
	\]
	Let $\bar{u}_i$ denote the image of $u_i$ in the residue field $l$. Then
	\[
	v_L^\log\left(\pi_L^{-np^j}\sum_{i\in I} u_i^{p^i-1} du_i\right)> v_L(a)
	\]
	if and only if
	\[
	\sum_{i\in I} \bar{u}_i^{p^i-1} d\bar{u}_i=0.
	\]
	If
	\[
	\sum_{i\in I} \bar{u}_i^{p^i-1} d\bar{u}_i=0,
	\]
	then, by repeatedly applying the Cartier operator, we see that 
	$\bar{u}_i\in l^p$ for every $i\in I$. This implies $v_L(a_i)<v_L^\log(da_i)$ for every $i\in I$, a contradiction. Hence we must have 
	\[
	v_L^\log(da)=v_L(a).  	\qedhere 
	\] 
\end{proof}

\begin{lemma}\label{0}
	Let $a\in W_s(L)$ be of negative valuation. Assume that $v_L(a) < v_L^\log(da)$ and the relevance length of $a$ is $1$. Then $a$ is not best.
\end{lemma}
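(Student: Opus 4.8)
The plan is to reduce the problem to the already‑understood case $s=1$ and then to lift a reduction of the weight‑one Witt component of $a$ to a reduction of all of $a$.

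First I would invoke Lemma \ref{01}. Since the relevance length of $a$ is $1$, the only relevant position is $i=0$, so the hypothesis $v_L(a)<v_L^{\log}(da)$ together with Lemma \ref{01} forces $v_L(a_0)<v_L^{\log}(da_0)$. Writing $m:=-v_L(a)=-v_L(a_0)>0$ and $a_0=\pi_L^{-m}u$ with $u\in\OO_L^{\times}$, a direct computation gives $da_0=\pi_L^{-m}(du-mu\,d\log\pi_L)$ in $\hat{\Omega}^1_{\OO_L}(\log)$, and inspecting this form shows that $v_L(a_0)<v_L^{\log}(da_0)$ can occur only when $p\mid m$ and $\bar u\in l^{\,p}$. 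By the $s=1$ description of ``best'' recalled just before Lemma \ref{01}, this says precisely that $a_0$ is not best in $W_1(L)$. I would then fix $\bar c\in l$ with $\bar c^{\,p}=\bar u$, lift it to $c\in\OO_L^{\times}$, and set $\beta:=\pi_L^{-m/p}c$, so that $v_L(a_0-\beta^{p})>-m$ while $v_L(\beta)=-m/p>-m$.

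Next I would construct $b\in W_s(L)$ with $v_L\bigl(a-(F-1)b\bigr)>v_L(a)$, which by Definition \ref{def best} shows $a$ is not best. The first approximation is the Teichm\"uller representative $b^{(0)}:=(0,\dots,0,\beta)$: its image under $F-1$ has weight‑one component $\beta^{p}-\beta$, so the weight‑one component of $a-(F-1)b^{(0)}$ is $(a_0-\beta^{p})+\beta$, of valuation $>-m$. For the higher Witt components one has to control the carries produced by the Witt arithmetic; the decisive point is that, because the residue class of $a_0\pi_L^{m}$ agrees with that of $\beta^{p}\pi_L^{m}$, the potentially most singular contributions of these carries (those of valuation $p^{0}v_L=-m$) cancel, by identities of the type $\sum_{i=1}^{p-1}\tfrac1p\binom pi(-1)^{p-i}=0$. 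Since the relevance length is $1$, nothing in $a$ competes at valuation $-m$ away from position $0$, and the residual singularities of $a-(F-1)b^{(0)}$ in positions $\geq 1$ can then be cleared one position at a time by adding Verschiebung‑supported corrections to $b$: as $F-1$ commutes with $V$ (over a field of characteristic $p$ both composites $FV$ and $VF$ equal multiplication by $p$), a correction supported in position $j$ only affects components in positions $\geq j$, so after at most $s-1$ further steps one obtains the desired $b$.

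I expect the main obstacle to be exactly this last bookkeeping: naively the Witt‑addition carries coming from an entry of valuation $-m/p$ grow wildly, and it is the conjunction of ``relevance length $1$'' with ``the leading coefficient of $a_0$ is a $p$‑th power'' that forces the dominant carries to telescope; arranging the successive approximation so that each stage \emph{improves} rather than worsens the valuation of the running representative is where the real work sits.
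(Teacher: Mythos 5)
Your first paragraph and the choice $b^{(0)}=(0,\ldots,0,\beta)$ are essentially the paper's own proof: the paper likewise takes $b$ concentrated in the weight-one slot, with $a_0=a_0'+b_0^p-b_0$ and $v_L(a_0)<v_L(a_0')$ supplied by the $s=1$ criterion, and concludes from relevance length $1$ that $v_L(a-(F-1)b)>v_L(a)$. The obstacle you locate in your second paragraph, however, does not exist. In characteristic $p$ the Frobenius acts componentwise and the Verschiebung is additive, so $(F-1)b^{(0)}=(0,\ldots,0,\beta^p-\beta)$; moreover adding a Witt vector supported only in the last (weight-one) component changes only that component -- the addition polynomials $S_j$ reduce to $S_j=x_j$ for $j<s-1$ and $S_{s-1}=x_{s-1}+z$, as one sees immediately from the ghost components. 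Hence $a-(F-1)b^{(0)}=(a_{s-1},\ldots,a_1,(a_0-\beta^p)+\beta)$ with no carries whatsoever: the positions $i\geq 1$ are untouched and already satisfy $p^iv_L(a_i)>v_L(a)$ by the relevance-length hypothesis, so the proof is complete after your first step. The iterated Verschiebung corrections and the binomial-identity cancellations you sketch address a non-problem and, as written, are not actually justified; note that you implicitly used the very no-carry fact above when you asserted that the weight-one component of $a-(F-1)b^{(0)}$ is $(a_0-\beta^p)+\beta$. (Also, $(0,\ldots,0,\beta)$ is $V^{s-1}$ of $\beta$, not a Teichm\"uller representative, though nothing in your argument depends on the name.)
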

\begin{proof}
	Since the relevance length is $1$, we have $v_L^\log(a^{p^i-1}_i da_i)\geq p^i v_L(a_i)>v_L(a_0)$ for $i>0$. 
	Therefore we must have $v_L(a_0)<v_L^\log(da_0)$, which implies that there exist $a_0', b_0 \in L$ such that $a_0 = a_0' +b_0^p - b_0$ and $v_L(a_0)<v_L(a_0')$. Let $a'=(0,\ldots, 0, a_0')$ and $b=(0,\ldots, 0, b_0)$. We have
	\[
	a = a' + (F-1)b,
	\]
	and $v_L(a)=v_L(a_0)<v_L(a')$, so $a$ is not best. 
\end{proof}

\begin{lemma}\label{1}
	Let $a\in W_s(L)$ be an element of negative valuation. 
	Assume that $v_L(a) < v_L^\log(da)$.  
	Then $a$ is not best.
\end{lemma}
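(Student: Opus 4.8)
The plan is to exhibit, from $a$, an explicit Witt vector of strictly larger valuation representing the same class in $H^1(L,\Z/p^s\Z)$, which by definition shows that $a$ is not best. Write $v_L(a)=-m$ with $m\in\N$, let $I=\{i: p^iv_L(a_i)=-m\}$ be the set of relevant positions of $a$, and set $j=\max I$, so the relevance length of $a$ is $j+1$. When $j=0$ this is precisely Lemma \ref{0}, but the construction below will be uniform in $j$.

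First I would invoke Lemma \ref{01}: since $v_L(a)<v_L^\log(da)$, no relevant position $k$ satisfies $v_L(a_k)=v_L^\log(da_k)$, hence $v_L(a_i)<v_L^\log(da_i)$ for every $i\in I$. By the $s=1$ description of best elements recalled above this forces $p\mid v_L(a_i)$ and $\overline{a_i/\pi_L^{v_L(a_i)}}\in l^p$ for each $i\in I$; fix $c_i\in\OO_L^\times$ with $\overline{c_i}^{\,p}=\overline{a_i/\pi_L^{v_L(a_i)}}$. Then I would define $b\in W_s(L)$ by $b_i=c_i\pi_L^{v_L(a_i)/p}$ for $i\in I$ and $b_i=0$ otherwise (legitimate since $p\mid v_L(a_i)$ on $I$). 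Because $L$ has characteristic $p$, the Witt-vector Frobenius $F$ acts componentwise as the $p$-th power map, so $(Fb)_i=c_i^{\,p}\pi_L^{v_L(a_i)}$ for $i\in I$ and $(Fb)_i=0$ otherwise; consequently $v_L(Fb)=-m$, the components $a_i$ and $(Fb)_i$ have the same leading coefficient for every $i\in I$, while $(Fb)_i=0$ and $p^iv_L(a_i)>-m$ for $i\notin I$. Also $v_L(b)=\tfrac1p\min_{i\in I}p^iv_L(a_i)=-m/p>-m$.

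The crux is then the claim that $a-Fb\in F_{m-1}W_s(L)$, i.e.\ $v_L(a-Fb)>-m$. Granting it and using that the Brylinski filtration is a filtration by subgroups,
\[v_L\big(a-(F-1)b\big)=v_L\big((a-Fb)+b\big)\ge\min\{v_L(a-Fb),\,v_L(b)\}>-m=v_L(a),\]
so with $a'=a-(F-1)b$ we have $a=a'+(F-1)b$ and $v_L(a')>v_L(a)$, whence $a$ is not best.

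The hard part is exactly the claim $v_L(a-Fb)>-m$, and the difficulty is that Witt-vector subtraction is not performed componentwise. I would prove it by a carry estimate. Each component $(a-Fb)_k$ equals $a_k-(Fb)_k$ plus a carry polynomial in the components of $a$ and of $Fb$ at the lower Witt positions; these carry polynomials are weighted-homogeneous, so evaluated on elements of $F_mW_s(L)$ the carry at position $k$ is no more negative than what membership in $F_mW_s(L)$ already allows at position $k$, and its leading term depends only on the leading coefficients of its arguments. By the choice of the $c_i$, $a$ and $Fb$ have equal leading coefficients at every relevant position, and at every non-relevant position $i$ both $a_i$ and $(Fb)_i$ already satisfy the stronger bound $p^iv_L(\cdot)>-m$; on the other hand the carries occurring in the identically zero vector $Fb-Fb$ vanish. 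Comparing term by term, the carries in $a-Fb$ can therefore only contribute in valuation strictly above $-m$, which yields the claim. Equivalently, one may simply quote Brylinski's computation of $\mathrm{gr}_\bullet W_s(L)$ in terms of the leading coefficients at the relevant positions. This bookkeeping is the whole substance of the lemma; the remaining ingredients are just the $s=1$ theory together with Lemmas \ref{01} and \ref{0}.
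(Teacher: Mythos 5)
Your proposal is correct in outline but takes a genuinely different route from the paper, and the step you yourself flag as the crux is only sketched. The paper argues by induction on the relevance length: it clears only the top relevant position $j$, taking $b$ supported in that single position, so that coordinate $j$ changes by plain addition and the higher coordinates are untouched; for the lower coordinates it needs nothing finer than the fact that the $F_nW_s(L)$ are subgroups, and the lost precision is recovered from $da'=da+db$ together with Lemma \ref{01}, which restores the hypothesis $v_L(a')<v_L^{\log}(da')$ with strictly smaller relevance length. You instead clear all relevant positions at once (your use of Lemma \ref{01} and the $s=1$ criterion to get $p\mid v_L(a_i)$ and $\overline{a_i/\pi_L^{v_L(a_i)}}\in l^p$ at each relevant $i$ is fine, and your construction even makes Lemma \ref{0} superfluous), but this commits you to the refined claim $v_L(a-Fb)>-m$, which is a statement about $\mathrm{gr}_m$ of the Brylinski filtration rather than mere subgroup-ness. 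The claim is true, but as written it is asserted ("the leading term of the carry depends only on the leading coefficients"), and the fallback of quoting Brylinski is not clearly available in the form you need. To close it along your lines: the universal polynomial giving a fixed component of $x-y$ vanishes identically on the diagonal, hence lies in the ideal generated by the componentwise differences $X_k-Y_k$, with cofactors that may be taken weighted-homogeneous of complementary weight; evaluating at $a$ and $Fb$, both in $F_mW_s(L)$, each cofactor meets the bound forced by $F_m$-membership while each $a_k-(Fb)_k$ is strictly better (by the choice of $c_i$ at relevant positions, and since $(Fb)_k=0$ at non-relevant ones), so every component of $a-Fb$ beats weight $-m$ strictly. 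With that supplement your argument is complete; the trade-off is that the paper's induction avoids any refined analysis of Witt carries, whereas your route isolates a reusable structural fact about the graded pieces.
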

\begin{proof}
	We shall prove by induction on the relevance length. The case in which $a$ has relevance length $1$ has been proven in Lemma \ref{0}. Assume now that $a$ has relevance length $j+1$.  
	
	From Lemma \ref{01}, $v(a_j)<v_L^\log (da_j)$, so there exist $a_j', b_j \in L$ such that $a_j = a_j' +b_j^p - b_j$ and $v_L(a_j)<v_L(a_j')$. Observe that $v_L(a_j)=pv_L(b_j)$. Let $b=(0,\ldots, 0, b_j,0,\ldots, 0)$ and $a'=a-(F-1)b$. Then
	\[
	a'=a-Fb+b=(a_{s-1},\ldots, a_{j+1}, a_j', \tilde{a}_{j-1}, \ldots \tilde{a}_0),
	\]
	where $p^i v_L(\tilde{a}_i)\geq v_L(a)$ for every $0\leq i \leq j-1$.
	
	We have two cases.  If $p^i v_L(\tilde{a}_i)> v_L(a)$ for all $0\leq i \leq j-1$, then $v_L(a')>v_L(a)$, so $a$ is not best. 
	
	On the other hand, if   $v_L(\tilde{a}_i)= v_L(a)$ for some $0\leq i \leq j-1$, then $a'$ has relevance length at most $j$ and $v_L(a')=v_L(a)$.		
	Further, $da'=da+db$. Since $v_L(a) < v_L^\log(da)$ and $v_L(a)= pv_L(b)\leq p v_L^\log(db)$, we have $v_L(a)<v_L^\log(da')$. Thus $v_L(a')<v_L^\log(da')$ and $a'$ is of relevance length at most $j$. By induction, $a'$ is not best, i.e., there are $a'', c\in W_s(L)$ such that 
	\[
	a'= a'' + (F-1)c,
	\]
	with $v(a')<v(a'')$. Then
	\[
	a= a'+ (F-1)b= a'' +(F-1)(b+c),
	\]
	with $v_L(a)<v_L(a'')$. Thus $a$ is not best. 
\end{proof}

\begin{theorem}\label{besta}
	Let $a\in W_s(L)$. The following conditions are equivalent:
	\begin{enumerate}[(i)]
		\item $a$ is best.
		\item There exists some relevant position $i$ such that $a_i$ is best in the sense of length one. 
		\item  $v_L(a)=v_L^\log(da)$. 
	\end{enumerate}
\end{theorem}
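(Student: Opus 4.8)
The plan is to establish the cycle of implications $(iii)\Rightarrow(i)\Rightarrow(ii)\Rightarrow(iii)$, reserving the substantive arguments for the first and last of these and handling $(i)\Rightarrow(ii)$ as a contrapositive bookkeeping step. The cheap cases (when $v_L(a)\geq 0$) are disposed of immediately: then $da\in\hat\Omega^1_{\OO_L}(\log)$ so $v_L^{\log}(da)\geq 0=v_L(a)$, every such $a$ is best by the remark following Definition \ref{def best}, and position $0$ is relevant with $a_0\in\OO_L$ best in the length-one sense; all three conditions hold. So assume throughout that $v_L(a)<0$.

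For $(iii)\Rightarrow(i)$: this is exactly the contrapositive of Lemma \ref{1}. If $a$ were not best, there would be $a',b\in W_s(L)$ with $a=a'+(F-1)b$ and $v_L(a)<v_L(a')$; but then $a$ is not in $F_nW_s(L)$ for $n=-v_L(a)-1$ being forced — more directly, Lemma \ref{1} shows that $v_L(a)<v_L^{\log}(da)$ forces $a$ not best, so its contrapositive gives: $a$ best $\Rightarrow v_L(a)=v_L^{\log}(da)$ (equality, since $v_L^{\log}(da)\geq v_L(a)$ always, as each term $a_i^{p^i-1}da_i$ has $v_L^{\log}\geq p^iv_L(a_i)\geq v_L(a)$). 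Wait — I should be careful about the direction. Lemma \ref{1} gives $v_L(a)<v_L^{\log}(da)\Rightarrow$ not best, i.e. best $\Rightarrow\neg(v_L(a)<v_L^{\log}(da))$, i.e. best $\Rightarrow v_L(a)=v_L^{\log}(da)$; that is $(i)\Rightarrow(iii)$. So in fact Lemma \ref{1} hands me $(i)\Rightarrow(iii)$ for free, and I must supply $(iii)\Rightarrow(i)$ and close the loop through $(ii)$.

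For $(iii)\Rightarrow(ii)$: assuming $v_L(a)=v_L^{\log}(da)$, Lemma \ref{01} produces a relevant position $k$ with $v_L(a_k)=v_L^{\log}(da_k)$. By the length-one characterization recalled before the lemmas, $v_L(a_k)=v_L^{\log}(da_k)$ is precisely the statement that $a_k$ is best in the length-one sense (when $v_L(a_k)<0$; and if $v_L(a_k)\geq 0$ then $a_k\in\OO_L$ is automatically best and $v_L^{\log}(da_k)\geq 0\geq v_L(a_k)$, but relevance forces $p^kv_L(a_k)=v_L(a)<0$ so this subcase doesn't arise). Hence $(ii)$ holds. This is the main place I need the explicit length-one criterion, so I would state it as: for $b\in L$ with $v_L(b)<0$, $b$ is best iff $p\nmid v_L(b)$ or ($p\mid v_L(b)$ and $\bar b\notin l^p$), and observe this is equivalent to $v_L(b)=v_L^{\log}(db)$ by the computation $db=v_L(b)\pi_L^{v_L(b)}u\,d\log\pi_L+\pi_L^{v_L(b)}du$ for $b=\pi_L^{v_L(b)}u$.

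For $(ii)\Rightarrow(iii)$: suppose some relevant position $i$ has $a_i$ best in the length-one sense, i.e. (since relevance forces $v_L(a_i)<0$) $v_L(a_i)=v_L^{\log}(da_i)$. Then $v_L^{\log}(a_i^{p^i-1}da_i)=p^iv_L(a_i)+v_L^{\log}(da_i)$? — no; the term is $a_i^{p^i-1}da_i$ and $v_L^{\log}(a_i^{p^i-1}da_i)=(p^i-1)v_L(a_i)+v_L(a_i)=p^iv_L(a_i)=v_L(a)$ using $v_L(a_i)=v_L^{\log}(da_i)$. So this one term achieves $v_L(a)$, while every other term $a_m^{p^m-1}da_m$ has $v_L^{\log}\geq p^mv_L(a_m)\geq v_L(a)$ — but there could be cancellation. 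This is the real obstacle: ruling out cancellation among the terms of $da$ that all attain valuation $v_L(a)$. I would handle it exactly as in the last paragraph of the proof of Lemma \ref{01}: reduce to the case $v_L(a)=-np^j$ with $j$ the top relevant index; if $p\nmid n$ the $d\log\pi_L$ coefficient $-nu_j^{p^j}$ survives in $l$ and there is no cancellation; if $p\mid n$, cancellation would force $\sum_{i\in I}\bar u_i^{p^i-1}d\bar u_i=0$, and applying the Cartier operator repeatedly forces $\bar u_i\in l^p$ for all $i\in I$, contradicting that some $a_i$ ($i\in I$) is best in the length-one sense. Hence $v_L^{\log}(da)=v_L(a)$, which is $(iii)$. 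In short, $(ii)\Rightarrow(iii)$ is essentially a re-packaging of Lemma \ref{01}'s argument, and the whole theorem follows by combining Lemma \ref{1} ($(i)\Rightarrow(iii)$), the argument just given ($(iii)\Rightarrow(ii)$ via Lemma \ref{01} plus the length-one criterion, and $(ii)\Rightarrow(iii)$ via the Cartier-operator contradiction), and the converse direction $(iii)\Rightarrow(i)$ — for which I realize I actually need more than Lemma \ref{1}: I need that $v_L(a)=v_L^{\log}(da)$ implies no reduction lowers the valuation.

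Let me reconsider: $(iii)\Rightarrow(i)$ is the genuinely new content. Suppose $v_L(a)=v_L^{\log}(da)$ but $a$ is not best, so $a=a'+(F-1)b$ with $v_L(a')<v_L(a)$; I expect this is impossible because $da=da'+d(F-1)b=da'+pd b-db$... wait $d$ here is $F^{s-1}d$ and doesn't interact simply with $F$; rather, on Witt vectors $d\circ F=p\cdot(\text{something})$, more precisely $d$ kills $(F-1)W_s(L)$ up to the relevant twist — actually the key identity is that $d:W_s(L)\to\hat\Omega^1_L$ satisfies $d((F-1)b)=$ a form of valuation $>v_L(a')$ is false in general. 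I think the cleanest route is: $a$ best $\iff n=\sw\chi$ where $n=-v_L(a)$ (stated in the text after the definition of relevance length — "best $a$ is not unique" remark is preceded by exactly this), and then $(iii)\Rightarrow(i)$ can be extracted because if $v_L(a)=v_L^{\log}(da)$ then for any $a'$ equivalent to $a$ with $v_L(a')>v_L(a)$ we'd get $da'\equiv da$ modulo exact forms of higher valuation, forcing $v_L^{\log}(da')\geq v_L^{\log}(da)=v_L(a)$... this needs care. The main obstacle, then, is proving $(iii)\Rightarrow(i)$ cleanly: showing that attaining $v_L(da)=v_L(a)$ genuinely certifies minimality of the valuation in the Artin–Schreier–Witt class. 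I would do this by induction on $s$ using the structure of $(F-1)W_s(L)$ and the compatibility of $d$ with the truncation $W_s\to W_{s-1}$, mirroring the inductive bookkeeping already set up in the proof of Lemma \ref{1}, but run in the opposite direction.
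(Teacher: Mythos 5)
Your overall architecture matches the paper's: the non-negative valuation case is dispatched trivially, $(i)\Rightarrow(iii)$ is the contrapositive of Lemma \ref{1}, and $(ii)\Leftrightarrow(iii)$ is Lemma \ref{01} combined with the length-one criterion (your care about cancellation in $(ii)\Rightarrow(iii)$ is legitimate but is already the content of Lemma \ref{01}, so citing it suffices). The problem is that the one implication you correctly identify as ``the genuinely new content,'' namely $(iii)\Rightarrow(i)$, is never actually proved: you end by proposing an induction on $s$ ``mirroring the inductive bookkeeping of Lemma \ref{1} but run in the opposite direction'' without carrying it out, and along the way you also reverse the inequality in the definition of ``not best'' (it should be $v_L(a)<v_L(a')$, i.e.\ the competitor $a'$ has \emph{larger} valuation).

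The step is in fact much shorter than you fear, and no induction is needed. Suppose $v_L(a)=v_L^{\log}(da)$ but $a$ is not best, so $a=a'+(F-1)b$ with $v_L(a)<v_L(a')$. The key identity you were groping for is $d(Fb)=0$: indeed $d(Fb)=\sum_i (b_i^p)^{p^i-1}d(b_i^p)=p\sum_i b_i^{p^{i+1}-1}db_i=0$ in characteristic $p$, so $da=da'-db$ exactly. Moreover $(F-1)b=a-a'$ has valuation $v_L(a)$, and since $v_L((F-1)b)=pv_L(b)$ for $v_L(b)<0$, we get $pv_L(b)=v_L(a)$ (more generally $pv_L(b)\geq v_L(a)$), hence $v_L^{\log}(db)\geq v_L(b)=v_L(a)/p>v_L(a)$ because $v_L(a)<0$. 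Combined with $v_L^{\log}(da')\geq v_L(a')>v_L(a)$, this gives $v_L^{\log}(da)=v_L^{\log}(da'-db)>v_L(a)$, contradicting $(iii)$. With this two-line valuation estimate inserted, your proof closes and coincides with the paper's.
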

\begin{proof}
	Observe that, when $a$ has non-negative valuation,  $(i), (ii)$ and $(iii)$ are all simultaneously satisfied, so in the following   we assume $v_L(a)<0$. 
	
	$(ii)\Leftrightarrow (iii)$ by Lemma \ref{01}.
	
	Lemma \ref{1} proves $(i)\Rightarrow (iii)$.
	
	To prove $(iii)\Rightarrow (i)$, assume that $a$ is not best. Then there are $a',b\in W_s(L)$ such that $a=a' + (F-1)b$ and $v_L(a)<v_L(a')$.
	We have $p v_L^\log(db)\geq p v_L(b)= v_L(a)$, so both $v_L^\log(db)>v_L(a)$ and $v_L^\log(da')\geq v_L(a')>v_L(a)$. Since $da=da'-db$, we get that $v_L^\log(da)>v_L(a)$. 
\end{proof}

We shall now use the notion of ``best $a$'' to construct a homomorphism
$ F_n H^1(L, \Z / p^s \Z) \to F_n\hat{\Omega}^1_L/F_{\lfloor n/p\rfloor}\hat{\Omega}^1_L$ satisfying some useful properties. 
Given an element of $H^1(L, \Z/p^s \Z)$, it is easy to show the existence of a best $a\in W_s(L)$ in its preimage. We then have the following proposition:

\begin{proposition} 
	\label{prop:besta}
	\hspace{2em} 
	\begin{enumerate}[(i)] 
		\item There is a unique homomorphism
		\[
		\rsw: F_n H^1(L, \Z / p^s \Z) \to F_n\hat{\Omega}^1_L/F_{\lfloor n/p\rfloor}\hat{\Omega}^1_L,
		\]
		called refined Swan conductor, such that the composition 
		\begin{center}
			\begin{tikzcd}
				F_n W_s(L) \arrow[r] & F_n H^1(L, \Z / p^s \Z) \arrow[r] &   F_n\hat{\Omega}^1_L/F_{\lfloor n/p\rfloor}\hat{\Omega}^1_L
			\end{tikzcd}
		\end{center}
		coincides with
		\[
		d:  F_n W_s(L)  \to  F_n\hat{\Omega}^1_L/F_{\lfloor n/p\rfloor}\hat{\Omega}^1_L.
		\]
		\item For $\lfloor n/p \rfloor \leq m \leq n$, the induced map 
		\[
		\rsw:  F_n H^1(L, \Z / p^s \Z)/ F_m H^1(L, \Z / p^s \Z) \to F_n\hat{\Omega}^1_L/F_m\hat{\Omega}^1_L
		\]
		is injective.
	\end{enumerate}
\end{proposition}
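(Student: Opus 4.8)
The plan is to build $\rsw$ by descending the map $d\colon F_nW_s(L)\to F_n\hat\Omega^1_L$ along the surjection $F_nW_s(L)\twoheadrightarrow F_nH^1(L,\Z/p^s\Z)$, and the key structural input is the identity $d\circ F=0$ on $W_s(L)$, which holds because $L$ has characteristic $p$: from the explicit formulas $F(x)=(x_{s-1}^p,\ldots,x_0^p)$ and $d(y)=\sum_i y_i^{p^i-1}dy_i$ one gets $d(Fx)=\sum_i p\,x_i^{p^{i+1}-1}dx_i=0$, hence $d\big((F-1)b\big)=-db$ for all $b\in W_s(L)$.

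For part (i), first I would record that $d$ respects the filtrations, $d(F_nW_s(L))\subseteq F_n\hat\Omega^1_L$, which is termwise: $v_L^{\log}\big(a_i^{p^i-1}da_i\big)=(p^i-1)v_L(a_i)+v_L^{\log}(da_i)\ge p^iv_L(a_i)\ge v_L(a)$, using $v_L^{\log}(da_i)\ge v_L(a_i)$. Next, the kernel of $F_nW_s(L)\to F_nH^1(L,\Z/p^s\Z)$ is $F_nW_s(L)\cap(F-1)W_s(L)$, and I claim $d$ sends it into $F_{\lfloor n/p\rfloor}\hat\Omega^1_L$. An element is $(F-1)b$ with $(F-1)b\in F_nW_s(L)$; if $v_L(b)<0$ then $v_L(Fb)=p\,v_L(b)<v_L(b)$, and since Brylinski's filtration is by subgroups the function $v_L$ on $W_s(L)$ is non-archimedean with $v_L(-b)=v_L(b)$, so $v_L\big((F-1)b\big)=v_L(Fb)=p\,v_L(b)\ge -n$, i.e.\ $v_L(b)\ge -\lfloor n/p\rfloor$, so $b\in F_{\lfloor n/p\rfloor}W_s(L)$ (this is automatic when $v_L(b)\ge 0$); hence $db\in F_{\lfloor n/p\rfloor}\hat\Omega^1_L$ and $d\big((F-1)b\big)=-db\equiv 0$ modulo $F_{\lfloor n/p\rfloor}\hat\Omega^1_L$. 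Therefore $d$ factors through a homomorphism $\rsw$ as required, and uniqueness follows from surjectivity of $F_nW_s(L)\to F_nH^1(L,\Z/p^s\Z)$.

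For part (ii), let $\chi\in F_nH^1(L,\Z/p^s\Z)$ have zero image in $F_n\hat\Omega^1_L/F_m\hat\Omega^1_L$; I must deduce $\chi\in F_mH^1(L,\Z/p^s\Z)$. Pick a best representative $a\in W_s(L)$ of $\chi$ (these exist, as noted just before the proposition); then $a\in F_nW_s(L)$, so $\rsw(\chi)$ is represented by $da$. If $v_L(a)\ge 0$ then $\chi$ lies in the image of $F_0W_s(L)$, hence in $F_0H^1\subseteq F_mH^1$. Otherwise $v_L(a)=-\sw\chi<0$, and Theorem~\ref{besta} gives $v_L^{\log}(da)=v_L(a)=-\sw\chi$; the hypothesis forces $da\in F_m\hat\Omega^1_L$, i.e.\ $\sw\chi\le m$, so $\chi\in F_{\sw\chi}H^1\subseteq F_mH^1$, as desired.

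I expect the main obstacle to be the floor-function bookkeeping in part (i): one has to pin down that a coboundary living in $F_nW_s(L)$ comes from a Witt vector in $F_{\lfloor n/p\rfloor}W_s(L)$ — no larger — which is precisely what makes $F_n\hat\Omega^1_L/F_{\lfloor n/p\rfloor}\hat\Omega^1_L$ the correct target and feeds the injectivity in (ii). This rests on the clean vanishing $d\circ F=0$ and on $v_L(Fb)=p\,v_L(b)$, together with Brylinski's fact that $F_\bullet W_s(L)$ is a filtration by subgroups, which is what licenses manipulating $v_L$ like a valuation. The rest is Theorem~\ref{besta} plus routine estimates.
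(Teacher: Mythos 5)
Your proof is correct and follows essentially the same route as the paper's: part (i) rests on the same key estimate that $(F-1)b\in F_nW_s(L)$ forces $p\,v_L(b)\ge -n$ and hence $db\in F_{\lfloor n/p\rfloor}\hat{\Omega}^1_L$ (you phrase this as descending $d$ along the kernel, the paper as well-definedness on best representatives, but the computation is identical), and part (ii) is the paper's argument verbatim via a best representative and Theorem \ref{besta}. No gaps.
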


\begin{proof}
	To prove assertion \textit{(i)}, define $\rsw$ as follows. Given an element $\chi \in F_n H^1(L, \Z / p^s \Z)$, take $a\in F_n W_s(L)$ such that $a$ is best and the image of $a$ is $\chi$. Then put $\rsw \chi = da$.
	
	We must show that  this map is well-defined. Let $a'\in F_n W_s(L)$ be another element that is best and maps to $\chi$. Then 
	\[
	a= a' + (F-1)b
	\]
	for some $b\in W_s(L)$. We get that  $p v_L^\log(db)\geq p v_L(b)\geq -n$, so $db \in F_{\lfloor n/p\rfloor}\hat{\Omega}^1_L$. Since $da= da' - db$, $da$ and $da'$ define the same class in $ F_n\hat{\Omega}^1_L/F_{\lfloor n/p\rfloor}\hat{\Omega}^1_L$. Uniqueness of the map is clear. 
	
	We shall now prove \textit{(ii)}. Let $\chi \in  F_n H^1(L, \Z / p^s \Z)$ such that $\rsw \chi \in F_m\hat{\Omega}^1_L$. Take $a\in  F_n W_s(L)$ that is best and such that $da=\rsw \chi$. Since $a$ is best, we have
	\[v^\log_L(\rsw\chi)=v^\log_L(da)=v_L(a)\geq -m,\]
	so $a\in F_m W_s(L)$. It follows that $\chi \in  F_m H^1(L, \Z / p^s \Z)$.
\end{proof}

\begin{remark}
	Related results were obtained by Y. Yatagawa in \cite{yatagawa2016equality}, where the author compares the non-logarithmic filtrations of Matsuda (\cite{matsuda1997swan}) and Abbes-Saito (\cite{abbessaito}) in positive characteristic.
\end{remark}
\begin{remark}
	Our refined Swan conductor $\rsw$ is a refinement of the refined Swan conductor defined by K. Kato in \cite[\S 5] {kato1989swan}.
\end{remark}

Let $L/K$ be an 
extension of complete discrete valuation fields of positive characteristic $p>0$, and assume that $K$ has perfect residue field $k$. Let $\chi\in H^1(K)$  and $\chi_L$ its image in $H^1(L)$.  
We shall now use Proposition \ref{prop:besta} 
to compute the Swan conductor of $\chi_L$. We will need the following lemma:  

\begin{lemma}\label{differential} 
	Let $L/K$ be an 
	extension of complete discrete valuation fields of equal characteristic $p>0$. Write $e=e(L/K)$ and assume that $k$ is perfect. 
	
	Let $\omega \in \hat{\Omega}^1_K$, and $\omega_L$ be the image of $\omega$ in $\hat{\Omega}^1_{L}$.
	Then
	\[
	v_L^{\log}(\omega_L)= e v_K^{\log}(\omega) +\delta_\tor(L/K).
	\]		
\end{lemma}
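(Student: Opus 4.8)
\emph{Proof idea.} The plan is to reduce the statement to the behaviour of the canonical rank-one generator of $\hat{\Omega}^1_{\OO_K}(\log)$ under the map to $\hat{\Omega}^1_{\OO_L}(\log)$, and then to identify the log-valuation of its image with $\delta_\tor(L/K)$ by a structure/change-of-basis argument in the completed free $\OO_L$-module $\hat{\Omega}^1_{\OO_L}(\log)$.

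First I would use that, since $K$ is a local field of equal characteristic $p$ with perfect residue field, $\hat{\Omega}^1_{\OO_K}(\log)$ is free over $\OO_K$ of rank one with basis $d\log\pi_K$; hence $\hat{\Omega}^1_K=K\cdot d\log\pi_K$ and any $\omega\in\hat{\Omega}^1_K$ can be written uniquely as $\omega=f\,d\log\pi_K$ with $f\in K$, so that $v_K^{\log}(\omega)=v_K(f)$. Writing $\eta$ for the image of $d\log\pi_K$ in $\hat{\Omega}^1_{\OO_L}(\log)$, we have $\omega_L=f\eta$. Since $v_L$ restricted to $K^\times$ is $e\,v_K$ (this being the definition of $e=e(L/K)$) and $v_L^{\log}(g\mu)=v_L(g)+v_L^{\log}(\mu)$ for $g\in L$ and $\mu\in\hat{\Omega}^1_L$, this gives $v_L^{\log}(\omega_L)=e\,v_K^{\log}(\omega)+v_L^{\log}(\eta)$, so it remains to show $v_L^{\log}(\eta)=\delta_\tor(L/K)$.

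Next I would compute $\eta$: writing $\pi_K=u\pi_L^{e}$ with $u\in\OO_L^\times$, one has $\eta=d\log u+e\,d\log\pi_L\in\hat{\Omega}^1_{\OO_L}(\log)$. Put $v:=v_L^{\log}(\eta)$. Expanding $\eta$ in the completed $\OO_L$-basis $\{db_\lambda\}_{\lambda\in\Lambda}\cup\{d\log\pi_L\}$, membership $\eta\in\pi_L^{n}\hat{\Omega}^1_{\OO_L}(\log)$ is equivalent to all coordinates of $\eta$ lying in $\pi_L^{n}\OO_L$, so $v$ is the minimum of the valuations of the coordinates of $\eta$ and $\eta=\pi_L^{v}\eta_0$ with $\eta_0\in\hat{\Omega}^1_{\OO_L}(\log)$ having at least one coordinate a unit. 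Then I would replace the corresponding basis vector by $\eta_0$ to get a new completed $\OO_L$-basis containing $\eta_0$; since $\OO_L\eta=\pi_L^{v}\cdot\OO_L\eta_0$ and $\OO_L\eta_0$ is a direct summand,
\[
\hat{\Omega}^1_{\OO_L/\OO_K}(\log)\;=\;\hat{\Omega}^1_{\OO_L}(\log)\big/\OO_L\eta\;\cong\;\OO_L/\pi_L^{v}\OO_L\;\oplus\;M',
\]
with $M'$ completed free over $\OO_L$, hence torsion-free. The torsion submodule then has length $v$, giving $\delta_\tor(L/K)=v=v_L^{\log}(\eta)$.

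The hard part will be the last step: justifying that a ``primitive'' element $\eta_0$ (one with a unit coordinate) extends to a completed basis of the possibly infinite-rank module $\hat{\Omega}^1_{\OO_L}(\log)$, that dividing out $\OO_L\pi_L^{v}\eta_0$ produces exactly the displayed splitting, and that forming the torsion submodule is compatible with the projective limits involved. I would also want to record the range in which the lemma is used, namely that $\eta\neq 0$ in $\hat{\Omega}^1_L$ — equivalently $v<\infty$, equivalently $\pi_K\notin L^{p}$ — since this is what makes $\eta_0$ genuinely part of a basis and keeps $\delta_\tor(L/K)$ (and both sides of the claimed identity) finite.
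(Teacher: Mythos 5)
Your proposal is correct and follows essentially the same route as the paper: reduce to the generator $d\log\pi_K$ of the rank-one module $\hat{\Omega}^1_{\OO_K}(\log)$, expand its image in the completed basis $\{db_\lambda, d\log\pi_L\}$ of $\hat{\Omega}^1_{\OO_L}(\log)$, and identify the minimum coordinate valuation simultaneously with $v_L^{\log}$ of the image and with $\delta_\tor(L/K)$. The paper simply asserts this last identification, whereas you supply the change-of-basis/direct-summand justification; that is a reasonable elaboration, not a different argument.
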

\begin{proof}
	Since the residue field $k$ of $K$ is perfect, $\hat{\Omega}^1_{\OO_K}(\log)=\OO_K \frac{d\pi_K}{\pi_K}$. Let $\{b_\lambda\}_{\lambda\in\Lambda}$ be a lift of a $p$-basis of $l$ to $\OO_L$, so that $\hat{\Omega}^1_{\OO_L}(\log)$ is the completed free module with basis $\{db_\lambda, d\log\pi_L:\lambda \in \Lambda\}$.  Write $\frac{d\pi_K}{\pi_K}= \sum \alpha_\lambda db_\lambda + \alpha d\log \pi_L$, where $\alpha_\lambda, \alpha \in \OO_L$. Then \[\delta_\tor(L/K)=\min\{\{v_L(\alpha)\}\cup \{v_L(\alpha_\lambda): \lambda\in \Lambda\}\}= v_L^\log\left(\frac{d\pi_K}{\pi_K}\right).\] Writing $\omega = \gamma \frac{d\pi_K}{\pi_K}$ for some $\gamma \in K$, we see that 
	\[
	v_L^\log(\omega) = v_L(\gamma) + v_L^\log\left(\frac{d\pi_K}{\pi_K}\right)= ev_K(\gamma)+\delta_\tor(L/K) = e v_K^\log(\omega) + \delta_\tor(L/K).\qedhere
	\] 
\end{proof}

\begin{theorem} \label{mainpositive}
	Let $L/K$ be an 
	extension of complete discrete valuation fields of equal characteristic $p>0$. Assume that $K$ has perfect residue field. 
	
	Denote by $e(L/K)$ the ramification index of $L/K$. 
	Assume that 	  $\chi \in H^1(K)$ is such that
	\[\sw \chi> \dfrac{p}{p-1} \dfrac{\delta_\tor(L/K)}{e(L/K)}.\] 
	Let $\chi_L$ be its image in $H^1(L)$. Then
	\[\sw \chi_L= e(L/K)\sw\chi-\delta_\tor(L/K).\]
\end{theorem}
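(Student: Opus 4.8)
The plan is to keep everything on the level of Artin--Schreier--Witt representatives, using Theorem~\ref{besta} and Lemma~\ref{differential}; the refined Swan conductor of Proposition~\ref{prop:besta} will not be needed (though the argument can be rephrased through it). Write $e=e(L/K)$, $n=\sw\chi$, $\delta=\delta_\tor(L/K)$. Since the Swan conductor depends only on the $p$-primary part of a character, I would first reduce to $\chi\in H^1(K,\Z/p^s\Z)$ for some $s\geq1$, so that $\chi_L\in H^1(L,\Z/p^s\Z)$ and one may work inside $W_s$. Note the hypothesis forces $n\geq1$ and $en>\delta$ (as $\tfrac{p}{p-1}>1$); rearranged, it says $-en/p>-(en-\delta)$, and this is the only way it enters. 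Then choose a best $a\in W_s(K)$ with image $\chi$: since $a$ is best, $a\in F_nW_s(K)\setminus F_{n-1}W_s(K)$, so $v_K(a)=-n$, and Theorem~\ref{besta} gives $v_K^{\log}(da)=v_K(a)=-n$.

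Next, pass to $L$. Let $b\in W_s(L)$ be the image of $a$; it represents $\chi_L$. Comparing valuations componentwise gives $v_L(b)=e\,v_K(a)=-en$, while $db\in\hat\Omega^1_L$ is the image of $da\in\hat\Omega^1_K$, so Lemma~\ref{differential} yields $v_L^{\log}(db)=e\,v_K^{\log}(da)+\delta=-en+\delta$. Choose any best $a_L\in W_s(L)$ representing $\chi_L$. Since $a_L$ and $b$ have the same image under $W_s(L)\to W_s(L)/(F-1)W_s(L)=H^1(L,\Z/p^s\Z)$, we may write $a_L=b+(F-1)c$ for some $c\in W_s(L)$.

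The crux is a valuation estimate on $c$. As $\chi_L\in F_{en}H^1$ (because $b\in F_{en}W_s(L)$ maps to it), $\sw\chi_L\leq en$, so $v_L(a_L)\geq-en$; combined with $v_L(b)=-en$ this gives $v_L\bigl((F-1)c\bigr)=v_L(a_L-b)\geq-en$. Since $v_L(Fc)=p\,v_L(c)$, for $v_L(c)<0$ this forces $p\,v_L(c)\geq-en$, hence $v_L(c)\geq-en/p$ (trivially true when $v_L(c)\geq0$). By the rearranged hypothesis, $v_L(c)\geq-en/p>-(en-\delta)=v_L^{\log}(db)$, so $v_L^{\log}(dc)\geq v_L(c)>v_L^{\log}(db)$. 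Using the identity $d\bigl(x-(F-1)y\bigr)=dx+dy$ from the proof of Lemma~\ref{1} (with $x=a_L$, $y=c$, noting $a_L-(F-1)c=b$) we get $db=da_L+dc$, i.e. $da_L=db-dc$, and since $v_L^{\log}(dc)>v_L^{\log}(db)$ there is no cancellation: $v_L^{\log}(da_L)=v_L^{\log}(db)=-(en-\delta)$. This is negative (as $en>\delta$), so $a_L\notin W_s(\OO_L)$, i.e. $v_L(a_L)<0$; Theorem~\ref{besta} then gives $v_L(a_L)=v_L^{\log}(da_L)=-(en-\delta)$. Hence $a_L\in F_{en-\delta}W_s(L)\setminus F_{en-\delta-1}W_s(L)$, and because $a_L$ is best this means $\sw\chi_L=en-\delta=e(L/K)\sw\chi-\delta_\tor(L/K)$.

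The genuinely delicate point is in the third paragraph: one needs the error term $dc$ to be \emph{strictly} smaller than $db$ in the $v_L^{\log}$-filtration, and this is precisely what the bound $\sw\chi>\tfrac{p}{p-1}\tfrac{\delta_\tor(L/K)}{e(L/K)}$ guarantees. Without it, $db$ and $dc$ could have the same $v_L^{\log}$ and cancel, making $\sw\chi_L$ strictly smaller than $e\sw\chi-\delta_\tor$; so a lower bound of essentially this shape is unavoidable. Everything else — the reduction to $\Z/p^s\Z$, the componentwise valuation comparisons, and the identities for $d$ on Witt vectors — is routine given Theorem~\ref{besta} and Lemma~\ref{differential}.
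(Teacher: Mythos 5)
Your proof is correct and takes essentially the same route as the paper: both arguments rest on Theorem \ref{besta}, Lemma \ref{differential}, and the same numerical consequence of the hypothesis, namely $en/p < en-\delta_\tor(L/K)$, and your valuation estimate on the correcting term $(F-1)c$ is precisely the content of (the proof of) Proposition \ref{prop:besta}, which the paper invokes via a commutative diagram with injective horizontal arrows. In effect you have inlined that proposition at the level of Witt-vector representatives rather than citing it, which is a harmless repackaging.
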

\begin{proof}
	Write $e=e(L/K)$. It is enough to show that, for a character $\chi \in H^1(K, \Z / p^s \Z)$  corresponding to the Artin-Schreier-Witt  equation $(F-1)X= a$, we have that,  	
	if $\sw \chi> p(p-1)^{-1}e^{-1}\delta_\tor(L/K)$, then
	\[\sw \chi_L= e\sw\chi-\delta_\tor(L/K).\]
	
	To simplify notation, write $n=\sw \chi$, $\delta_\tor = \delta_\tor(L/K)$. The case $e=1$ is simple, so we assume $e>1$. Since  $\sw \chi> p(p-1)^{-1}e^{-1}\delta_\tor(L/K)$, we have that $\frac{en}{p}< en-\delta_\tor$, so $\lfloor\frac{en}{p}\rfloor \leq en-\delta_\tor -1$.  From that,  Theorem \ref{besta}, and Lemma \ref{differential}, we get that the   diagram 
	\begin{center}
		\begin{tikzcd}
			F_n H^1(K, \Z / p^s \Z)/ F_{n-1} H^1(K, \Z / p^s \Z)  \arrow[r]\arrow[d]&  F_n\hat{\Omega}^1_K/ F_{n-1}\hat{\Omega}^1_K \arrow[d]\\ F_{en} H^1(L, \Z / p^s \Z)/ F_{en-\delta_\tor-1} H^1(L, \Z / p^s \Z)  \arrow[r] &  F_{en}\hat{\Omega}^1_L/F_{en-\delta_\tor -1}\hat{\Omega}^1_L
		\end{tikzcd}
	\end{center}
	commutes, and the horizontal arrows are injective. Thus
	\[\sw \chi_L= e(L/K)\sw\chi-\delta_\tor(L/K).\qedhere\]
\end{proof}

\section{The example of a two-dimensional local field of  mixed characteristic with finite last residue field}
\label{sec:3}

In Section \ref{sec:2}, we proved Main Result \ref{main re po}. We shall now focus on 
proving Main Result \ref{main re mix}. Let $L/K$ be an extension of complete discrete valuation fields of mixed characteristic, and assume that $K$ has perfect residue field. We will 
show that, if $\chi\in H^1(K)$ has Swan conductor sufficiently large, then
\[\sw \chi_L= e\sw\chi-\delta_\tor(L/K),\]
where $\chi_L$ is the image of $\chi$ in $H^1(L)$ and $e=e(L/K)$ is the ramification index of $L/K$.

The proof of this result is based on two key ideas: the commutativity of a diagram of the form
\begin{center}
	\begin{tikzcd}
		\m_L^{en'-\delta_\tor(L/K)}	\hat{\Omega}^{q-1}_{\OO_{L}}(\log) \arrow[r, "\exp_\eta"] \arrow[d, "\Res_{L/K}"] 
		& \hat{K}_q(L) \arrow[d, "\Res_{L/K}"] \\
		\m_K^{n'} \arrow[r, "\exp_\eta"]
		& K^\times
	\end{tikzcd}
\end{center}
and a modified version of higher dimensional local class field theory.  In order to facilitate comprehension and illustrate the main ideas, in the present section we will consider, in a brief and expository way, the special case in which $L$ is a two-dimensional local field with finite last residue field. In this special case, the second key idea is simpler, since we can use two-dimensional local class field theory without any modification. In Section \ref{sec gen mixed} we consider the general case in which $L$ is a complete discrete valuation field of mixed characteristic.  

Through this section, we let $L$ be a two-dimensional local field of mixed characteristic with residue field $l$ of characteristic $p>0$, and $K\subset L$ a one-dimensional local field with finite residue field $k$. 

As a consequence of \cite{morrow2010explicit}, there is a residue homomorphism
\[
\Res_{L/K}: 	\hat{\Omega}^1_{\OO_L}\to \OO_{K}
\]
which induces
\[
\Res_{L/K}: 	\hat{\Omega}^1_{\OO_L}(\log)\otimes_{\OO_L}L\to K.		
\]
\begin{ex}
	When $L=K\{\{T\}\}$ (see page \pageref{kx}), 
	\[
	\Res_{L/K}\left(\sum\limits_{i=-\infty}^\infty a_i T^i\frac{dT}{T}\right)=a_0.
	\]
\end{ex}

From  \cite{kurihara1998exponential}, if $\eta\in\OO_L$ is such that $v_L(\eta)\geq \dfrac{2e_L}{p-1}+1$, there exists  an exponential map 
\[	\exp_\eta: 	\hat{\Omega}^1_{\OO_{L}}(\log) \to \hat{K}_2(L).  \]
This map is used in the following theorem, which is the first key step in the proof of the main result for the special case of a two-dimensional local field with finite last residue field. Its proof is omitted due to similarity with that of Theorem \ref{key2}.

\begin{theorem}\label{the:com}
	Let $L$ be a two-dimensional local field of mixed characteristic and with finite last residue field, and $K\subset L$  a local field. Write $e= e(L/K)$. Let $\eta \in \OO_K$ be such that 
	\[n=v_K(\eta)\geq
	\frac{2e_K}{p-1}+\frac{1}{e}.\]
	Then, if $n'\in \N$  satisfies
	\[
	n'\geq \frac{\delta_\tor(L/K)}{e},
	\] we have a commutative diagram 
	\begin{center}
		\begin{tikzcd}
			\m_L^{en'-\delta_\tor(L/K)}	\hat{\Omega}^1_{\OO_{L}}(\log) \arrow[r, "\exp_\eta"] \arrow[d, "\Res_{L/K}"] 
			& \hat{K}_2(L) \arrow[d, "\Res_{L/K}"] \\
			\m_K^{n'} \arrow[r, "\exp_\eta"]
			& K^\times
		\end{tikzcd}
	\end{center}
	where the right vertical arrow is the residue homomorphism from $K$-theory defined in \cite{kato1983residue} and the top and bottom horizontal maps are, respectively, the exponential maps $\exp_{\eta,2}$ and $\exp_{\eta,1}$ defined in \cite{kurihara1998exponential}.
\end{theorem}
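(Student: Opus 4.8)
The plan is to verify the commutativity of the square on a set of topological generators of the source module $\m_L^{en'-\delta_\tor(L/K)}\,\hat{\Omega}^1_{\OO_L}(\log)$ and then conclude by the additivity and continuity of all four maps (each of the two composites around the square is a continuous homomorphism from this module to $K^\times$, so testing on generators suffices). Before that, one checks that the two numerical hypotheses are exactly what is needed for every arrow to be defined on the indicated subobjects: since $e_L=e\,e_K$, the bound $v_K(\eta)\geq\frac{2e_K}{p-1}+\frac{1}{e}$ translates into $v_L(\eta)\geq\frac{2e_L}{p-1}+1$, which is Kurihara's hypothesis making $\exp_{\eta,2}$ defined on $\hat{\Omega}^1_{\OO_L}(\log)$; the same bound makes the ordinary exponential $\exp_{\eta,1}$ converge on $\m_K^{n'}$; and, by the valuation-shift formula of Lemma \ref{differential}, the condition $n'\geq\delta_\tor(L/K)/e$ is precisely what guarantees that $\Res_{L/K}$ carries $\m_L^{en'-\delta_\tor(L/K)}\,\hat{\Omega}^1_{\OO_L}(\log)$ into $\m_K^{n'}$.

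Fix a prime element $\pi_L$ of $L$ and a lift $t\in\OO_L$ of a prime element of the residue field $l$, so that $\hat{\Omega}^1_{\OO_L}(\log)$ is the completed free $\OO_L$-module on $d\log t$ and $d\log\pi_L$. The source module is then topologically generated by the elements $x\,d\log t$ and $x\,d\log\pi_L$ with $x\in\m_L^{en'-\delta_\tor(L/K)}$, and for such a generator Kurihara's formula reads $\exp_{\eta,2}(x\,d\log b)=\{\exp(\eta x),b\}$ in $\hat{K}_2(L)$, with $b\in\{t,\pi_L\}$ and $\exp(\eta x)=\sum_{i\geq 0}(\eta x)^i/i!$. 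Thus the theorem reduces to the identity
\[
\Res_{L/K}\bigl(\{\exp(\eta x),b\}\bigr)=\exp\!\bigl(\eta\cdot\Res_{L/K}(x\,d\log b)\bigr)
\]
in $K^\times$, for every such $x$ and $b$, where on the left $\Res_{L/K}$ is Kato's residue homomorphism on $K_2$ and on the right it is the residue homomorphism on $\hat{\Omega}^1_L$ from \cite{morrow2010explicit}.

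To prove this identity I would compute both sides explicitly, treating the two coordinates $b=t$ and $b=\pi_L$ separately, since the explicit shapes of Kato's $K$-theoretic residue (from \cite{kato1983residue}) and of Morrow's differential residue differ in the two cases — the $b=\pi_L$ case being essentially governed by the boundary/reciprocity formulas, and the $b=t$ case by the residue-field coordinate. In each case both sides lie in a principal-unit subgroup of $K^\times$ on which $\log$ is an isomorphism (again by the lower bound on $v_K(\eta)$), so one may apply $\log$ and reduce to the $\OO_K$-linear identity $\log\Res_{L/K}\{\exp(\eta x),b\}=\Res_{L/K}\bigl((\eta x)\,d\log b\bigr)$; since $\log\exp(\eta x)=\eta x$, this is exactly the assertion that Kato's $K$-theoretic residue of a principal-unit symbol is computed by Morrow's differential residue of its logarithm, which can be checked on the explicit local expansions of both residue maps.

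The part I expect to be the main obstacle is the bookkeeping that ties together the three filtrations — on $\hat{\Omega}^1_{\OO_L}(\log)$, on $\hat{K}_2(L)$, and on $K^\times$ — and the shifts they undergo under $\exp_\eta$ and under $\Res_{L/K}$: one must ensure that, with the given hypotheses on $\eta$ and $n'$, all four maps are simultaneously defined on the relevant subobjects and that the final identity can be tested modulo a power of $\m_K$ high enough that passing to logarithms is legitimate. As always in mixed characteristic, the lower bound $v_K(\eta)\geq\frac{2e_K}{p-1}+\frac{1}{e}$ is forced by the convergence of the exponential and logarithm series; and most of the genuine computational work lies in the explicit description of Morrow's residue when $L$ is not of the split form $K\{\{T\}\}$.
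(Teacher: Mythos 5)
Your overall shape --- check the square on generators $x\,db$, $x\,d\log\pi_L$ and compare the two composites --- is the right one, and your bookkeeping of the numerical hypotheses ($v_L(\eta)=e\,v_K(\eta)\geq \frac{2e_L}{p-1}+1$ for convergence, $en'-\delta_\tor(L/K)\geq 0$ for the residue map to land in $\m_K^{n'}$) matches the paper. But there is a genuine gap exactly where you yourself flag ``the main computational work'': you give no mechanism for computing $\Res_{L/K}$ when $L$ is not of the split form $K\{\{T\}\}$. In the paper this is not an obstacle to be computed around but the structural heart of the proof: $\Res_{L/K}$ on differentials is \emph{defined} as $\Res_{M/K}\circ\Tr_{L/M}$ for $M=K\{\{T\}\}$ (Definition \ref{def:residueM} and its two-dimensional antecedent in \cite{morrow2010explicit}), and Kato's residue on $\hat{K}_2$ likewise factors as $\Res_{M/K}\circ N_{L/M}$. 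The square is therefore split into two: the upper square (trace against norm) commutes by Kurihara's compatibility of $\exp_\eta$ with $\Tr_{L/M}$ and $N_{L/M}$, proved in \cite{kurihara1998exponential}, and only the lower square, for the standard field $M$ itself, is checked by explicit expansion. Without invoking this factorization your ``explicit local expansions of both residue maps'' do not exist for general $L$.

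Two further points in the explicit computation for $M=K\{\{T\}\}$ are glossed over by your ``apply $\log$ and compare'' step. First, writing $a=a_-+a_0+a_+$ with $a_\pm$ the negative/positive $T$-power parts, the identity $\Res_{M/K}\{\exp(\eta a_-),T\}=1$ is not a formal consequence of a logarithmic identity: Kato's Theorem 1 in \cite{kato1983residue} only kills the finite truncations $\{1+\eta a_kT^k+\cdots+(\eta a_k)^mT^{mk}/m!,\,T\}$, and one must pass to the limit using $v_K((\eta a_k)^m/m!)\to\infty$ and the continuity of the residue map --- this limit argument is an essential part of the paper's proof of Theorem \ref{key2}. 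Second, a small but real inaccuracy: in mixed characteristic $\hat{\Omega}^1_{\OO_L}(\log)$ is \emph{not} the completed free module on $d\log t$ and $d\log\pi_L$; it has the form $\hat{M}\oplus\OO_L/\m_L^a\OO_L$ (see \cite[Lemma 1.1]{kurihara1987two}), since $p=u\pi_L^{e_L}$ imposes a relation. Checking a homomorphism identity on generators is still legitimate, but the freeness claim as stated is false and should not be used to, e.g., define maps by prescribing values on $d\log t$ and $d\log\pi_L$ independently.
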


We observe that $\m_L^{en'-\delta_\tor(L/K)}	\hat{\Omega}^1_{\OO_{L}}(\log)\to \m_K^{n'}$ in the diagram above is surjective (see Proposition \ref{cor:2}) and the images of \[\exp_\eta:\m_L^{en'-\delta_\tor(L/K)}	\hat{\Omega}^1_{\OO_{L}}(\log)\to \hat{K}_2(L)\] and \[\exp_\eta:\m_K^{n'}\to K^\times\] are, respectively, $U^{e(n+n')-\delta_\tor(L/K)}\hat{K}_2(L)$ and $U_K^{n+n'}$ (see Lemma \ref{lemma:exp}).

Theorem \ref{the:com}  is then combined with two-dimensional local class field theory to prove the main result in the particular case of a two-dimensional local field of mixed characteristic with finite last residue field:
\begin{theorem} \label{the:maintwo}
	Let $L$ be a two-dimensional local field of mixed characteristic with finite last residue field, and $K\subset L$ be a local field. Assume that  $\chi\in H^1(K)$ is  such that \[\sw \chi \geq
	\dfrac{2e_K}{p-1}+\dfrac{1}{e(L/K)}+ \left\lceil\dfrac{\delta_\tor(L/K)}{e(L/K)}\right\rceil.\] Denote by $\chi_L$ its image in $H^1(L)$. Then 
	\[\sw \chi_L = e(L/K) \sw \chi - \delta_\tor(L/K).\]
\end{theorem}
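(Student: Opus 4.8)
The plan is to reduce the statement to the $p$-primary case and then combine the commutative diagram of Theorem \ref{the:com} with two-dimensional local class field theory. First I would recall that, since $\sw$ is compatible with direct limits and with the decomposition $H^1(K) = \bigoplus_\ell H^1(K)\{\ell\}$, and since for $\ell \neq p$ the Swan conductor is insensitive to tame base change in a way that makes the formula trivial (both sides reduce to $e(L/K)\sw\chi$ as $\delta_\tor$ plays no role in the prime-to-$p$ part — or, more carefully, one checks the tame/prime-to-$p$ contributions directly), it suffices to treat $\chi \in H^1(K,\Z/p^s\Z)$. So fix such a $\chi$ with $n := \sw\chi$ satisfying the hypothesis $n \geq \frac{2e_K}{p-1} + \frac{1}{e} + \lceil \delta_\tor/e\rceil$, where $e = e(L/K)$.

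Next I would exploit Kato's description of the Swan conductor via class field theory: for a local field $F$ with finite residue field, $H^1(F,\Q/\Z) \cong \operatorname{Hom}(F^\times, \Q/\Z)$ (continuous), and $\sw\chi$ is the smallest $m$ such that $\chi$ kills $U_F^{m+1}$; equivalently, via the pairing with $K_2$ in the two-dimensional case, the filtration $F_m H^1(L,\Q/\Z)$ is the annihilator of $U^{m+1}\hat{K}_2(L)$ under the canonical pairing $H^1(L,\Q/\Z) \times \hat K_2(L) \to \Q/\Z$ coming from two-dimensional local class field theory, and similarly for $K$ with $\hat K_1(L) = \hat{K^\times}$ — compatibly with the residue/corestriction maps. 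The point is that $\chi_L$ is obtained from $\chi$ by precomposing the reciprocity pairing with $\Res_{L/K}: \hat K_2(L) \to \hat K_1(K) = K^\times$ (this is the standard compatibility of reciprocity maps with the residue map in $K$-theory, as in \cite{kato1983residue}). So I would write $\sw\chi_L = m$ where $m+1$ is the least integer with $\chi \circ \Res_{L/K}$ trivial on $U^{m+1}\hat K_2(L)$.

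Now the computation becomes: determine the largest filtration step of $\hat K_2(L)$ on which $\chi\circ\Res_{L/K}$ is nontrivial. Here I would plug in Theorem \ref{the:com} together with the two remarks following it: for $n' \geq \delta_\tor/e$, the map $\Res_{L/K}$ sends $\exp_\eta(\m_L^{en'-\delta_\tor}\hat\Omega^1_{\OO_L}(\log)) = U^{e(n+n')-\delta_\tor}\hat K_2(L)$ onto $\exp_\eta(\m_K^{n'}) = U_K^{n+n'}$, and the bottom horizontal $\exp_\eta$ is surjective onto $U_K^{n+n'}$ while $\m_L^{en'-\delta_\tor}\hat\Omega^1_{\OO_L}(\log) \to \m_K^{n'}$ is surjective (Proposition \ref{cor:2}). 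Combining these: $\Res_{L/K}(U^{e(n+n')-\delta_\tor}\hat K_2(L)) = U_K^{n+n'}$ for all sufficiently large $n'$, and hence (running $n'$ down to the smallest allowed value and tracking that the filtration jumps match up) $\Res_{L/K}$ identifies the quotient $U^{en-\delta_\tor}\hat K_2(L)/U^{en-\delta_\tor+1}$-type graded pieces with those of $K^\times$ around level $n$. Since $\chi$ is nontrivial on $U_K^n/U_K^{n+1}$ but trivial on $U_K^{n+1}$ (that is what $\sw\chi = n$ means), surjectivity of $\Res_{L/K}$ at the relevant levels forces $\chi\circ\Res_{L/K}$ to be nontrivial on $U^{e n - \delta_\tor}\hat K_2(L)$ and trivial on $U^{en-\delta_\tor+1}\hat K_2(L)$, i.e. $\sw\chi_L = en - \delta_\tor$, as desired.

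The main obstacle I anticipate is bookkeeping the filtration indices precisely: one must be sure that the surjectivity statements of Theorem \ref{the:com} and its corollaries hold all the way down to the \emph{exact} threshold $n' = \lceil\delta_\tor/e\rceil$ and $n = v_K(\eta)$ meeting the stated bound, and that no filtration jump of $K^\times$ near level $n$ fails to be hit by $\Res_{L/K}$ from the corresponding level $en - \delta_\tor$ of $\hat K_2(L)$. In other words, the delicate point is not the commutativity of the diagram (granted) but confirming that $\Res_{L/K}$ is \emph{strictly compatible} with the two filtrations in the range that matters — that $\Res_{L/K}(U^{en-\delta_\tor}\hat K_2(L)) = U_K^n$ and $\Res_{L/K}(U^{en-\delta_\tor+1}\hat K_2(L)) \subseteq U_K^{n+1}$ — which is where the hypothesis on $\sw\chi$ (equivalently on $v_K(\eta)$) is used to guarantee $\exp_\eta$ is an isomorphism onto the relevant unit groups rather than merely surjective with uncontrolled kernel. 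Once that strict compatibility is in hand, the conclusion about $\sw\chi_L$ is immediate from the definition of the Swan conductor as the annihilator threshold.
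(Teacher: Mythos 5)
Your proposal is correct and follows essentially the same route as the paper: combine the commutative diagram of Theorem \ref{the:com} with the compatibility of two-dimensional local class field theory with $\Res_{L/K}$, and use Lemma \ref{lemma:exp} together with Proposition \ref{cor:2} (at both levels $en'-\delta_\tor$ and $en'-\delta_\tor+1$) to get the strict compatibility of $\Res_{L/K}$ with the unit filtrations, which is exactly the ``delicate point'' you flag. The only cosmetic differences are your preliminary reduction to $p$-primary characters, which the paper does not bother with here, and your remark that $\exp_\eta$ must be an isomorphism --- in fact only the exact identification of its image (Lemma \ref{lemma:exp}) is needed, not injectivity.
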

\begin{proof}
	Write $e= e(L/K)$.
	Let $n'=\left\lceil\frac{\delta_\tor(L/K)}{e}\right\rceil$ and $n= \sw \chi - n'$. Pick $\eta \in \OO_K$ with $v_K(\eta)=n$. 
	By two-dimensional local  class field theory, the diagram 
	\begin{center}
		\begin{tikzcd}
			\hat{K}_2(L) \arrow[d, "\Res_{L/K}"] \arrow[r] & G_L^{\mathrm{ab}} \arrow[d]  \\
			K^\times \arrow[r] & G_K^{\mathrm{ab}}
		\end{tikzcd}
	\end{center}
	commutes. 
	Together with Theorem \ref{the:com}, this gives us a commutative diagram 
	\begin{center}
		\begin{tikzcd}
			\m_L^{en'-\delta_\tor(L/K)}	\hat{\Omega}^1_{\OO_{L}}(\log) \arrow[r, "\exp_\eta"] \arrow[d, "\Res_{L/K}"] 
			& \hat{K}_2(L) \arrow[d, "\Res_{L/K}"] \arrow[r] & G_L^{\mathrm{ab}} \arrow[d]  \\
			\m_K^{n'} \arrow[r, "\exp_\eta"]
			& K^\times \arrow[r] & G_K^{\mathrm{ab}} 
		\end{tikzcd}
	\end{center}
	
	From Proposition \ref{cor:2}, the left vertical arrow is surjective.  We know that $\sw \chi =m$ if and only if $\chi$ kills $U_K^{m+1}$ but not  $U_K^{m}$, and $\sw \chi_L =m$ if and only if $\chi_L$ kills $U^{m+1}\hat{K}_2(L)$ but not  $U^{m}\hat{K}_2(L)$ (see the proof of Proposition \ref{most} for details). Then it follows from the commutative diagram above and Lemma \ref{lemma:exp}
	that 
	\[\sw \chi_L = e (n'+n) - \delta_\tor(L/K)=e \sw \chi - \delta_\tor(L/K).\qedhere\]
\end{proof}

As a guide for Section \ref{sec gen mixed}, we will use Theorem \ref{the:maintwo} to get the same result for a complete discrete valuation field of mixed characteristic $L$ which has residue field that is a function field in one variable over a finite field. In Section \ref{sec gen mixed}, Proposition \ref{most} will be used to obtain Theorem \ref{main mixed} in an analogous way.

\begin{corollary}
	Let $L$ be a complete discrete valuation field of mixed characteristic, and $K\subset L$ be a local field.  Assume that the residue field $l$ of $L$ is a function field in one variable over the finite residue field $k$ of $K$.
	
	Assume that  $\chi\in H^1(K)$ is  such that \[\sw \chi \geq
	\dfrac{2e_K}{p-1}+\dfrac{1}{e(L/K)}+\left\lceil\dfrac{\delta_\tor(L/K)}{e(L/K)}\right\rceil.\] Denote by $\chi_L$ its image in $H^1(L)$. Then 
	\[\sw \chi_L = e(L/K) \sw \chi - \delta_\tor(L/K).\]
\end{corollary}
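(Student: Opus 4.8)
The plan is to reduce to Theorem \ref{the:maintwo} by completing the residue field $l$ of $L$ at a suitably chosen place. Let $b\in\OO_L$ be a lift of a $p$-basis element $\bar b$ of $l$, as in Section \ref{sec:2}, so that $\hat{\Omega}^1_{\OO_L}(\log)$ is the completed free module on $\{db,\,d\log\pi_L\}$. Since $k$ is perfect we have $k\subset l^p$, so $\bar b$ is a separating element of $l/k$ and $d\bar b$ is a nonzero meromorphic differential on the curve with function field $l$; it is therefore nonzero at all but finitely many closed points. I would fix a place $w$ of $l/k$ outside this finite set; its residue field $k_w$ is then finite (a finite separable extension of $k$), the completion of $l$ at $w$ is $l_w\cong k_w((t))$, and the image of $d\bar b$ in $\Omega^1_{l_w/k_w}$ is nonzero, i.e.\ $\bar b\notin l_w^{\,p}$.

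Next I would attach to $(L,w)$ a two-dimensional local field $L'$ of mixed characteristic with finite last residue field $k_w$, together with an embedding $L\hookrightarrow L'$ fixing the uniformizer, $\pi_{L'}=\pi_L$, and inducing $l\hookrightarrow l_w$ on residue fields. Concretely, if $\tilde t\in\OO_L$ lifts a uniformizer of $w$, then $\OO_{L'}$ is built from $\OO_L$ and $\tilde t$ exactly as $K\{\{T\}\}$ (page \pageref{kx}) is built from $K$ and $T$: one completes the appropriate localization of $\OO_L$ so that its residue field becomes the completion $l_w$. Checking that $\OO_{L'}$ is a complete discrete valuation ring with residue field $l_w$ is routine, and then $K\subset L\subset L'$ realizes $L'$ as a two-dimensional local field over the local field $K$, so that Theorem \ref{the:maintwo} applies to the extension $L'/K$.

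It remains to match the data entering Theorem \ref{the:maintwo}. From $\pi_{L'}=\pi_L$ we get $e(L'/K)=e(L'/L)\,e(L/K)=e(L/K)$, and $e_K$ is unchanged. For the log-different, a tower argument gives $\delta_\tor(L'/K)=\delta_\tor(L/K)+\delta_\tor(L'/L)$, the base-changed term contributing with multiplicity $e(L'/L)=1$; moreover $\hat{\Omega}^1_{\OO_{L'}/\OO_L}(\log)\cong\OO_{L'}/(f)$, where $f$ is the $d\tilde t$-coefficient in the expansion of $db$ in the basis $\{d\tilde t,\,d\log\pi_L\}$ of $\hat{\Omega}^1_{\OO_{L'}}(\log)$, and the reduction of $f$ modulo $\pi_L$ is, up to a unit, the image of $d\bar b$ in $\Omega^1_{l_w/k_w}$, which is nonzero by the choice of $w$. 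Hence $\delta_\tor(L'/L)=0$, so $\delta_\tor(L'/K)=\delta_\tor(L/K)$ and the natural map $\hat{\Omega}^1_{\OO_L}(\log)\otimes_{\OO_L}\OO_{L'}\to\hat{\Omega}^1_{\OO_{L'}}(\log)$ is an isomorphism. Together with $e(L'/L)=1$ this forces $\sw\chi_{L'}=\sw\chi_L$ --- e.g.\ via Kato's refined Swan conductor (\cite[\S 5]{kato1989swan}), which is functorial and whose $v_L^{\log}$ recovers the Swan conductor; the tame prime-to-$p$ part of $\chi$ plays no role. Since $e$, $e_K$, and $\delta_\tor$ agree for $L'/K$ and $L/K$, the hypothesis on $\sw\chi$ is exactly the one required for Theorem \ref{the:maintwo} applied to $L'/K$, which then gives $\sw\chi_L=\sw\chi_{L'}=e(L/K)\sw\chi-\delta_\tor(L/K)$.

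The step I expect to be the main obstacle is the equality $\sw\chi_{L'}=\sw\chi_L$: one must know that Kato's Swan conductor is unchanged under the extension $L'/L$, which, although induced by the transcendental residue extension $l\hookrightarrow l_w$, is ``log-unramified'' in the sense that $e(L'/L)=1$ and $\hat{\Omega}^1_{\OO_{L'}/\OO_L}(\log)=0$. A second, purely technical, point is to verify carefully that the construction of $\OO_{L'}$ yields a two-dimensional local field with the stated residue field.
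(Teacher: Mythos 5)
Your proposal is correct and follows essentially the same route as the paper: the paper embeds $l$ into a finite separable extension $E$ of $k((T))$ (for $T$ a separating element), forms the extension $L(E)/L$ with $e(L(E)/L)=1$ and $\OO_{L(E)}\otimes_{\OO_L}\hat{\Omega}^1_{\OO_L}(\log)\simeq\hat{\Omega}^1_{\OO_{L(E)}}(\log)$, deduces $\sw\chi_{L(E)}=\sw\chi_L$ from \cite[Lemma 6.2]{kato1989swan}, and applies Theorem \ref{the:maintwo}. Your completion $l\hookrightarrow l_w$ at a well-chosen place is one concrete realization of such an $E$, and the Swan-conductor invariance you flag as the main obstacle is exactly Kato's Lemma 6.2, applicable here since $e(L'/L)=1$ and $l_w/l$ is separable (equivalently $\bar b\notin l_w^{\,p}$), so no new argument via the refined Swan conductor is needed.
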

\begin{proof}
	It is sufficient to prove that this case can be reduced to that of  a two-dimensional local field with finite last residue field.
	
	Since $l$ is a function field in one variable over $k$, $l$ is a finite separable extension of $k(T)$ for some transcendental element $T$. Then there is an embedding of $l$ into a finite separable extension $E$ of $k((T))$. Note that $\{T\}$ is a $p$-basis for both $l$ and $E$. Then there is a complete discrete valuation field $L(E)$ which is an extension of $L$ satisfying $\OO_L\subset \OO_{L(E)}$, $\OO_{L(E)}\m_L= \m_{L(E)}$,  and the residue field of $L(E)$ is isomorphic to $E$ over $l$.
	
	From \cite[Lemma 6.2]{kato1989swan}, we get that  $\sw \chi_{L(E)}=\sw \chi_{L}$. Further, since $E$ is a one-dimensional local field, $L(E)$ is a two-dimensional local field.
	Finally, since $E$ and $l$ have the same $p$-basis $\{T\}$, and $\pi_L$ is a prime for both $L$ and $L(E)$, the map $\OO_{L(E)}\otimes_{\OO_L} \hat{\Omega}^1_{\OO_{L}}(\log)\to \hat{\Omega}^1_{\OO_{L(E)}}(\log)$ is an isomorphism and we get  $\OO_{L(E)}\otimes_{\OO_L} \hat{\Omega}^1_{\OO_{L}}(\log)_\tor\simeq \hat{\Omega}^1_{\OO_{L(E)}}(\log)_\tor$. Therefore, by definition, $\delta_\tor(L(E)/K)=\delta_\tor(L/K)$.
	
	Thus it is sufficient to prove that
	\[\sw \chi_{L(E)} = e(L(E)/K) \sw \chi - \delta_\tor(L(E)/K),\]
	which follows from Theorem \ref{the:maintwo}. 
\end{proof}

\section{Swan conductor in the general mixed characteristic case} \label{sec gen mixed}

In this section, we shall generalize the results of the previous section to the more general case in which $L$ is any complete discrete valuation field of mixed characteristic.  
We start by briefly reviewing some necessary background and proving some preliminary results.

Let $L$ be a complete discrete valuation field of mixed characteristic. Let $B$ be a lift of a $p$-basis of the residue field $l$ to $\OO_L$. Write $\{e_\lambda\}_{\lambda\in\Lambda} = \{db : b\in B\} \cup \{d\log \pi_L\}$. The $\OO_L$-module $\hat{\Omega}^1_{\OO_L}(\log)$ has the structure 
\[
\hat{M}\oplus \OO_L/\m_L^a\OO_L
\]
for some $a\in\Z_{\geq0}$ (see \cite[Lemma 1.1]{kurihara1987two} and   \cite[4.3]{kato2010modulus}). Here $\hat{M}$ is the completed free $\OO_L$-module with basis $\{e_\lambda \}_{\lambda \in \Lambda-\{\mu \}}$, i.e., $\hat{M}= \varprojlim\limits_{m} M/\m_L^m M$ where $M$ is the free $\OO_L$-module with basis $\{e_\lambda \}_{\lambda \in \Lambda-\{\mu \}}$ for some $\mu \in \Lambda$.

We have, from \cite[Theorem 0.1]{kurihara1998exponential}, the existence of an exponential map 
\[
\exp_{\eta,r+1}:	\hat{\Omega}^{r}_{\OO_{L}}(\log) \to  \hat{K}_{r+1}(L) 
\]
when  $\eta\in\OO_L$ satisfies  
\[v_L(\eta)\geq \frac{2e_L}{p-1}+1.\]
This exponential map satisfies
\[
a \frac{db_1}{b_1}\wedge\cdots\wedge\frac{db_{r}}{b_{r}} \mapsto \{\exp(\eta a),b_1,\ldots,b_{r}\}
\]
for $a\in \OO_L$, $b_i\in \OO_L^\times$. We shall denote $\exp_{\eta,r+1}$ simply by $\exp_{\eta}$ through this paper.
\begin{remark}
	More precisely, in \cite{kurihara1998exponential}, M. Kurihara proved the existence of an exponential map 
	\[
	\exp_{\eta,r+1}:	\hat{\Omega}^{r}_{\OO_{L}} \to  \hat{K}_{r+1}(L) 
	\]
	when
	$\eta\in\OO_L$ satisfies  
	\[v_L(\eta)\geq \frac{2e_L}{p-1}.\] 
	Considering the existence of a map $\hat{\Omega}^r_{\OO_{L}}(\log)\to	\hat{\Omega}^r_{\OO_{L}}$ satisfying the commutative diagram 
	\begin{center}
		\begin{tikzcd}
			\hat{\Omega}^r_{\OO_{L}}(\log) \arrow[r, "\pi_L"] 
			& \hat{\Omega}^r_{\OO_{L}}\\
			\hat{\Omega}^r_{\OO_{L}} \arrow[ur, "\pi_L"]\arrow[u] 
		\end{tikzcd}
	\end{center}
	we can define, for 
	\[v_L(\eta)\geq \frac{2 e_L}{p-1}+1,\]
	an exponential map
	\[
	\exp^{\log}_{\eta,r+1}:	\hat{\Omega}^{r}_{\OO_{L}}(\log) \to  \hat{K}_{r+1}(L) 
	\]
	by taking the composite
	\[\exp^{\log}_{\eta,r+1}= \exp_{\frac{\eta}{\pi_L},r+1}\circ \pi_L. \]
	Through this paper, we omit the superscript $\log$ when we write this exponential map.  
\end{remark}

\begin{lemma} Let $L$ be a complete discrete valuation field of mixed characteristic, with residue field $l$ of characteristic $p>0$.
	Assume that $\eta \in\OO_L$ satisfies  
	\[n = v_L(\eta)\geq \frac{2e_L}{p-1}+1.\]
	Then the image of the exponential map
	\[
	\exp_\eta:\m_L^{n'}\hat{\Omega}^{r}_{\OO_{L}}(\log) \to  \hat{K}_{r+1}(L) 
	\]
	is $U^{n+n'}\hat{K}_{r+1}(L)$. \label{lemma:exp}
\end{lemma}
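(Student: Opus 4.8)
The plan is to establish the two inclusions separately. Beyond the properties of $\exp_\eta$ already recalled, the only external input I need is the classical fact that the (truncated) exponential series gives a topological group isomorphism $\exp\colon\m_L^{m}\xrightarrow{\sim}U_L^{m}$ with inverse $\log$ whenever $m>\tfrac{e_L}{p-1}$; this applies in particular with $m=n+n'$, since $n+n'\ge n\ge\tfrac{2e_L}{p-1}+1>\tfrac{e_L}{p-1}$. I would also first record the explicit formula in the form I actually use it: for $a\in\OO_L$ and arbitrary $c_1,\dots,c_r\in L^\times$,
\[
\exp_\eta\bigl(a\,d\log c_1\wedge\cdots\wedge d\log c_r\bigr)=\{\exp(\eta a),c_1,\dots,c_r\}.
\]
For $c_i\in\OO_L^\times$ this is the formula stated before Lemma \ref{lemma:exp}; for a slot equal to $\pi_L$ it follows from the definition $\exp^{\log}_{\eta,r+1}=\exp_{\eta/\pi_L,r+1}\circ\pi_L$ recalled in the preceding remark, using $\pi_L\cdot d\log\pi_L=d\pi_L$ and $a\,d\pi_L=a\pi_L\,d\log\pi_L$; and the general case follows by multilinearity, expanding $d\log c_i=v_L(c_i)\,d\log\pi_L+d\log\bigl(c_i\pi_L^{-v_L(c_i)}\bigr)$ and using that $\exp_\eta$ is a homomorphism, that $\exp(kz)=\exp(z)^k$ for $k\in\Z$, and that $d\log\colon L^\times\to\hat\Omega^1_{\OO_L}(\log)$ is multiplicative.

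For the inclusion $\exp_\eta\bigl(\m_L^{n'}\hat\Omega^r_{\OO_L}(\log)\bigr)\subseteq U^{n+n'}\hat K_{r+1}(L)$, I would observe that $\m_L^{n'}\hat\Omega^r_{\OO_L}(\log)$ is topologically generated by elements $a\,e_{\lambda_1}\wedge\cdots\wedge e_{\lambda_r}$ with $a\in\m_L^{n'}$, where each $e_{\lambda_j}$ is $d\log\pi_L$ or $db$ for $b$ a lift of a $p$-basis element; the torsion summand $\OO_L/\m_L^{a}$ of $\hat\Omega^r_{\OO_L}(\log)$ needs no special treatment, as its elements have the same shape. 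By the displayed formula such a generator maps to a symbol $\{\exp(\eta a),c_1,\dots,c_r\}$ with $v_L(\eta a)\ge n+n'$, hence with $\exp(\eta a)\in U_L^{n+n'}$, so it lies in $U^{n+n'}K_{r+1}(L)$ by definition of the filtration. Since $\exp_\eta$ is continuous and $U^{n+n'}\hat K_{r+1}(L)$ is closed, the whole image is contained in it.

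For the reverse inclusion, recall that $U^{n+n'}\hat K_{r+1}(L)$ is the closure of the subgroup generated by the symbols $\{u,c_1,\dots,c_r\}$ with $u\in U_L^{n+n'}$ and $c_i\in L^\times$. Given such a symbol, I would write $u=\exp(x)$ with $x\in\m_L^{n+n'}$ via the isomorphism $\exp\colon\m_L^{n+n'}\xrightarrow{\sim}U_L^{n+n'}$, and set $y=x/\eta$. Since $v_L(x)\ge n+n'$ and $v_L(\eta)=n$, we have $v_L(y)\ge n'$, so $y\in\m_L^{n'}$ and $\omega:=y\,d\log c_1\wedge\cdots\wedge d\log c_r\in\m_L^{n'}\hat\Omega^r_{\OO_L}(\log)$; moreover $\exp_\eta(\omega)=\{\exp(\eta y),c_1,\dots,c_r\}=\{\exp(x),c_1,\dots,c_r\}=\{u,c_1,\dots,c_r\}$. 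Thus every such symbol is in the image of $\exp_\eta$; as this image is the continuous image of the complete module $\m_L^{n'}\hat\Omega^r_{\OO_L}(\log)$, it is a complete—hence closed—subgroup, and therefore contains the closure $U^{n+n'}\hat K_{r+1}(L)$. Combining with the first inclusion gives equality.

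The arithmetic here (that multiplication by $\eta$ shifts filtration degrees by exactly $n$) is elementary; the two points I expect to require genuine care are: (i) pinning down the explicit formula for $\exp_\eta$ on wedge products involving $d\log\pi_L$ and then extending it multilinearly to all $c_i\in L^\times$, which is where the $\log$-vs-non-log bookkeeping from the preceding remark enters; and (ii) the topological argument—continuity of $\exp_\eta$, closedness of the filtration steps, and completeness of the source—needed to pass from a generating set to the full closed subgroup $U^{n+n'}\hat K_{r+1}(L)$ on both sides.
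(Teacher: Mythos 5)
There is a genuine gap, and it sits exactly at the point you flagged as needing care: your claimed formula $\exp_\eta(a\,d\log c_1\wedge\cdots\wedge d\log c_r)=\{\exp(\eta a),c_1,\ldots,c_r\}$ for arbitrary $c_i\in L^\times$ is not what the map does when some $c_i$ has nonzero valuation. Kurihara's defining property gives this formula only for units $b_i\in\OO_L^\times$, and your derivation for a $\pi_L$-slot is circular: after composing with multiplication by $\pi_L$ you must evaluate $\exp_{\eta/\pi_L}$ on $a\,d\pi_L\wedge\cdots$, and rewriting $a\,d\pi_L=a\pi_L\,d\log\pi_L$ just returns you to the original expression without ever reducing to the unit case. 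The correct evaluation, which is where \cite[Proposition 3.2]{kurihara1998exponential} enters in the paper's proof, is $\exp_\eta\bigl(a\,\tfrac{d\pi_L}{\pi_L}\wedge\tfrac{db_1}{b_1}\wedge\cdots\wedge\tfrac{db_{r-1}}{b_{r-1}}\bigr)=\{\exp(pa\eta),\pi_L,b_1,\ldots,b_{r-1}\}$, with an extra factor $p$ in the exponent. This does not affect your first inclusion (the symbol lies in $U^{n+n'+e_L}\hat{K}_{r+1}(L)\subset U^{n+n'}\hat{K}_{r+1}(L)$), but it breaks your reverse inclusion: for a symbol $\{u,\pi_L,b_1,\ldots,b_{r-1}\}$ with $u\in U_L^{n+n'}$, your candidate preimage $y\,d\log\pi_L\wedge\cdots$ with $y=\log(u)/\eta$ actually maps to $\{\exp(p\eta y),\pi_L,\ldots\}=\{u^p,\pi_L,\ldots\}$, not to the symbol you want; solving $\exp(p\eta y)=u$ instead gives $y=\log(u)/(p\eta)$ with only $v_L(y)\geq n'-e_L$, so $y$ need not lie in $\m_L^{n'}$. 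Thus the symbol-by-symbol preimage construction fails precisely on the classes with a slot of nonzero valuation, which is most of $U^{n+n'}\hat{K}_{r+1}(L)$.

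The paper avoids this by never constructing exact preimages of individual symbols: containment follows from the two explicit formulas above, and surjectivity is proved by a graded, successive-approximation argument — for every $\tilde n\geq n+n'$ the maps $\m_L^{\tilde n}/\m_L^{\tilde n+1}\otimes\Omega^r_{\OO_L}(\log)\to U^{\tilde n}K_{r+1}(L)/U^{\tilde n+1}K_{r+1}(L)$, $\alpha\otimes\beta\,\tfrac{db_1}{b_1}\wedge\cdots\wedge\tfrac{db_r}{b_r}\mapsto\{1+\alpha\beta,b_1,\ldots,b_r\}$ (with $b_i\in L^\times$), are surjective, and one passes to the limit using that the source and $\hat{K}_{r+1}(L)$ are complete with respect to the relevant filtrations. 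If you want to repair your argument you should replace the exact-preimage step by approximation modulo $U^{\tilde n+1}$ and iterate; note also, as a secondary point, that your appeal to ``the continuous image of a complete module is closed'' is not automatic for filtration topologies, and the successive-approximation scheme is exactly what replaces it.
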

\begin{proof} Let $a\in \m_L^{n'}$, $b_i\in \OO_L^\times$. Observe that, from the definition of the exponential map and \cite[Proposition 3.2]{kurihara1998exponential},
	\[
	a \frac{d\pi_L}{\pi_L}\wedge \frac{db_1}{b_1}\wedge\cdots\wedge\frac{db_{r-1}}{b_{r-1}} \mapsto \{\exp(pa\eta),\pi_L,  b_1, \ldots, b_{r-1}\}
	\]
	and
	\[
	a   \frac{db_1}{b_1}\wedge\cdots\wedge\frac{db_{r}}{b_{r}} \mapsto \{\exp(a\eta), b_1, \ldots, b_{r}\}.
	\]
	
	Then the image is contained in   $U^{n+n'}\hat{K}_{r+1}(L)$. Let $\tilde{n}\geq n+n'$. Observe that the maps 
	\[
	\frac{\m_L^{\tilde{n}}}{\m_L^{\tilde{n}+1}} \otimes \Omega_{\OO_L}^r(\log)\to U^{\tilde{n}}K_{r+1}(L)/U^{\tilde{n}+1}K_{r+1}(L)
	\]
	given by 
	\[
	\alpha\otimes \beta   \frac{db_1}{b_1}\wedge\cdots\wedge\frac{db_{r}}{b_{r}} \mapsto \{1+\alpha\beta,b_1,\ldots, b_{r}\},
	\]
	where $\alpha\in\m_L^{\tilde{n}}$, $\beta \in \OO_L, b_i \in L^\times$,  are surjective. Passing to the limit, we get that $\exp_\eta:\m_L^{n'}\hat{\Omega}^r_{\OO_L}(\log)\to U^{n+n'}\hat{K}_{r+1}(L)$ is surjective.
\end{proof}

We shall now construct some tools and intermediate steps necessary for the obtainment of the main result. For an extension of complete discrete valuation fields of mixed characteristic $L/K$, where $k$ is not necessarily perfect, denote by	$\delta_\tor(L/K)$  the length of 
\[\dfrac{\hat{\Omega}^1_{\OO_L}(\log)_{\mathrm{tor} }}{\OO_L\otimes_{\OO_K}\hat{\Omega}^1_{\OO_K}(\log)_{\tor }}.\]
\begin{remark}
	When $k$ is perfect,   the $\OO_K$-module 
	$\hat{\Omega}^1_{\OO_{K}}(\log)$ is a torsion module, and therefore   $\delta_\tor(L/K)$ is simply the length of 
	\[
	\left(\hat{\Omega}^1_{\OO_L/\OO_K}(\log)\right)_\tor,
	\]
	which coincides with the definition of  $\delta_\tor(L/K)$ introduced previously.
\end{remark}

We have the following property:

\begin{lemma} \label{lemma:trace}
	Let $L/M$ be a finite extension of complete discrete valuation fields of characteristic zero. Assume that the residue field $l$ of $L$ has characteristic $p>0$ and $[l:l^p]=p^{r}$. Write $e= e(L/M)$. 
	Then
	\[
	\Tr_{L/M}\left(\m_L^{en-\delta_\tor(L/M)}\dfrac{\hat{\Omega}^{r}_{\OO_L}(\log)}{\hat{\Omega}^{r}_{\OO_L}(\log)_\tor}\right)=\m_M^n\dfrac{\hat{\Omega}^{r}_{\OO_M}(\log)}{\hat{\Omega}^{r}_{\OO_M}(\log)_\tor}
	\] 		
	and
	\[
	\Tr_{L/M}\left(\m_L^{en-\delta_\tor(L/M)+1}\dfrac{\hat{\Omega}^{r}_{\OO_L}(\log)}{\hat{\Omega}^{r}_{\OO_L}(\log)_\tor}\right)= \m_M^{n+1}\dfrac{\hat{\Omega}^{r}_{\OO_M}(\log)}{\hat{\Omega}^{r}_{\OO_M}(\log)_\tor}
	\] 		
	for every integer $n$. 
\end{lemma}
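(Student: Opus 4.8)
The plan is to reduce the statement to a computation of the trace map on modules of differential forms, for which the key input is an explicit description of $\hat{\Omega}^{r}_{\OO_L}(\log)$ and $\hat{\Omega}^{r}_{\OO_M}(\log)$ modulo torsion in terms of bases, together with the standard relationship between the trace $\Tr_{L/M}$ and the different $D_{L/M}$. Since $[l:l^p]=p^{r}$, fixing a lift $B=\{b_1,\ldots,b_r\}$ of a $p$-basis of $l$ to $\OO_L$, the free part of $\hat{\Omega}^{1}_{\OO_L}(\log)$ has rank $r$ with basis among $\{db_1,\ldots,db_r,d\log\pi_L\}$ (one element going into the torsion part, as in the decomposition $\hat{M}\oplus\OO_L/\m_L^a$ recalled above), so $\hat{\Omega}^{r}_{\OO_L}(\log)/\hat{\Omega}^{r}_{\OO_L}(\log)_\tor$ is free of rank one; the same holds for $M$ since $l$ is also the residue field story for $M$ up to the ramification, and $[l:l^p]$ is unchanged in the extension (the residue field extension $l/l_M$ is such that $\{b_i\}$ remains a $p$-basis). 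So both modules in the statement are free of rank one over $\OO_L$ and $\OO_M$ respectively, generated by a top log-form $\omega_L$ (resp.\ $\omega_M$), and the problem becomes: compute $v_M^{\log}\big(\Tr_{L/M}(\m_L^{j}\omega_L)\big)$ for each $j$.

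First I would pin down the comparison between $\omega_L$ and $\omega_M$ under the natural map $\hat{\Omega}^{r}_{\OO_M}(\log)\to\hat{\Omega}^{r}_{\OO_L}(\log)$: by the analogue of Lemma \ref{differential}/the definition of $\delta_\tor$, the image of $\omega_M$ equals (a unit times) $\pi_L^{\delta_\tor(L/M)}\omega_L$ modulo torsion — this is precisely what $\delta_\tor(L/M)$ measures, being the length of the quotient of the torsion-free parts. Next, the core computation: $\Tr_{L/M}$ on $\hat{\Omega}^{r}_{\OO_L}(\log)$ (torsion-free part), viewed through the generator $\omega_L$, should satisfy $\Tr_{L/M}(\OO_L\cdot\omega_L)=\m_M^{-c}\cdot\OO_M\cdot\omega_M$ for an appropriate integer $c$ related to the different; combining with $\omega_M = (\mathrm{unit})\,\pi_L^{\delta_\tor(L/M)}\omega_L$ one reads off that $\Tr_{L/M}(\m_L^{j}\omega_L)=\m_M^{\lceil (j+\delta_\tor(L/M))/e\rceil - \delta_\tor(L/M) - (\text{correction})}\cdot(\ldots)$, and one checks that with $j=en-\delta_\tor(L/M)$ and $j=en-\delta_\tor(L/M)+1$ the exponent lands on $n$ and $n+1$ respectively. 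The cleanest route is probably to use the projection formula $\Tr_{L/M}(x\cdot\iota(y))=\Tr_{L/M}(x)\cdot y$ for $y$ a form on $M$ and reduce to the known statement $\Tr_{L/M}(\m_L^{m})=\m_M^{\lceil (m - d)/e\rceil}$ where $d$ is the valuation-theoretic different exponent, then match $d$ against $\delta_\tor$ via the log-different identity — this is exactly the mixed-characteristic analogue of Lemma \ref{differential}, and I expect it was established (or is immediate from the cited \cite{kurihara1987two}, \cite{kato2010modulus}) earlier.

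The two displayed equations differ only by shifting $n\mapsto n+1$ in the first, or equivalently by replacing the exponent $en-\delta_\tor(L/M)$ by $en-\delta_\tor(L/M)+1$; so once the first is proved for all integers $n$, the second is \emph{not} automatic (shifting by one inside $\m_L$ need not shift the image by one inside $\m_M$ when $e>1$), and this is the subtle point: it asserts that the trace map is ``surjective with the expected jump'' at consecutive integers, i.e.\ that $\Tr_{L/M}(\m_L^{en-\delta_\tor(L/M)}\omega_L)$ and $\Tr_{L/M}(\m_L^{en-\delta_\tor(L/M)+1}\omega_L)$ are genuinely $\m_M^n\omega_M$ and $\m_M^{n+1}\omega_M$ and not equal. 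I would handle this by a length/dimension count: the quotient $\m_L^{en-\delta_\tor(L/M)}\omega_L/\m_L^{en-\delta_\tor(L/M)+1}\omega_L$ is one-dimensional over $l$, it surjects onto $\m_M^n\omega_M/\Tr_{L/M}(\m_L^{en-\delta_\tor(L/M)+1}\omega_L)$, and one shows the latter is nonzero by exhibiting an explicit element $\pi_L^{en-\delta_\tor(L/M)}\omega_L$ whose trace has valuation exactly $n$ — which follows from the standard fact that $\Tr_{L/M}(\pi_L^{-d}\OO_L)=\OO_M$ with $\Tr_{L/M}(\pi_L^{-d}u)$ a unit for suitable unit $u$, where $d$ is the different exponent; the arithmetic $en - \delta_\tor(L/M) = e\cdot n - \delta_\tor(L/M)$ versus $d$ then forces equality on the nose. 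The main obstacle, then, is verifying this exactness/surjectivity at both ends simultaneously and correctly identifying the log-different exponent with $\delta_\tor(L/M)$ in mixed characteristic (where $\hat{\Omega}^1_{\OO_K}(\log)$ is no longer torsion, so one must work with the torsion-free quotient throughout and be careful that the trace descends to it); everything else is a routine unwinding of the one-dimensional-module picture via the projection formula.
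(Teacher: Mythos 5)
Your overall architecture is the same as the paper's: both torsion-free quotients are free of rank one, one compares generators, and the projection formula reduces everything to the classical computation of $\Tr_{L/M}$ on powers of $\m_L$ via the different. The paper indeed proceeds exactly this way, and your treatment of the second displayed equality (it is not formally implied by the first, but follows from the same trace computation with the exponent shifted by one) is fine.

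However, there is a genuine error in the central bookkeeping step. You assert that the image of $\omega_M$ is a unit times $\pi_L^{\delta_\tor(L/M)}\omega_L$ because ``this is precisely what $\delta_\tor(L/M)$ measures, being the length of the quotient of the torsion-free parts.'' That is not the definition: $\delta_\tor(L/M)$ is the length of the quotient of the \emph{torsion submodules}, $\hat{\Omega}^1_{\OO_L}(\log)_{\tor}/\bigl(\OO_L\otimes_{\OO_M}\hat{\Omega}^1_{\OO_M}(\log)_{\tor}\bigr)$. Writing $\delta(L/M)$ for the length of the full relative module $\hat{\Omega}^1_{\OO_L/\OO_M}(\log)$, additivity of length shows that the quotient of the torsion-free parts has length $\delta(L/M)-\delta_\tor(L/M)$, so the correct comparison of rank-one generators is
\[
\dfrac{\hat{\Omega}^{r}_{\OO_L}(\log)}{\hat{\Omega}^{r}_{\OO_L}(\log)_\tor}=\m_L^{\delta_\tor(L/M)-\delta(L/M)}\,\dfrac{\hat{\Omega}^{r}_{\OO_M}(\log)}{\hat{\Omega}^{r}_{\OO_M}(\log)_\tor},
\]
which reduces the claim to $\Tr_{L/M}\bigl(\m_L^{en-\delta(L/M)}\bigr)=\m_M^n$; this follows from the classical formula together with the identity $\tilde{\delta}(L/M)=\delta(L/M)+e-1$ relating the log and non-log relative differentials (equivalently, the different exponent). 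Your version, taken literally, replaces $en-\delta(L/M)$ by $en-2\delta_\tor(L/M)$, which does not yield $\m_M^n$ in general, and the unresolved ``$(\text{correction})$'' placeholder is exactly where this discrepancy hides. Until you correctly identify the length of the quotient of the torsion-free parts as $\delta(L/M)-\delta_\tor(L/M)$ and match it against the different, the exponents do not close up, so the proof as written is incomplete.
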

\begin{proof}
	We shall prove the first equality. 
	Let $\delta(L/M)$ be the length of 
	the $\OO_L$-module
	$\hat{\Omega}^1_{\OO_L/\OO_M}(\log)$.
	Observe that $\dfrac{\hat{\Omega}^1_{\OO_L}(\log)}{\hat{\Omega}^1_{\OO_L}(\log)_\tor}$ and $\dfrac{\hat{\Omega}^1_{\OO_M}(\log)}{\hat{\Omega}^1_{\OO_M}(\log)_\tor}$ are free of rank $r$. We  have an exact sequence
	\begin{center}
		\begin{tikzcd}[column sep=8pt]
			0 \arrow[r] & \OO_L\stackbin[\OO_{M}]{}{\otimes} \dfrac{\hat{\Omega}^1_{\OO_M}(\log)}{\hat{\Omega}^1_{\OO_M}(\log)_\tor} \arrow[r]&\dfrac{\hat{\Omega}^1_{\OO_L}(\log)}{\hat{\Omega}^1_{\OO_L}(\log)_\tor}\arrow[r]& \dfrac{\dfrac{\hat{\Omega}^1_{\OO_L}(\log)}{\hat{\Omega}^1_{\OO_L}(\log)_\tor}}{
				\OO_L\stackbin[\OO_{M}]{}{\otimes}\dfrac{\hat{\Omega}^1_{\OO_M}(\log)}{\hat{\Omega}^1_{\OO_M}(\log)_\tor}}\arrow[r] & 0.
		\end{tikzcd}
	\end{center} 
	Since the length of
	\[
	\dfrac{\hat{\Omega}^1_{\OO_L}(\log)}{\hat{\Omega}^1_{\OO_L}(\log)_\tor}\Bigg/{
		\left(\OO_L\otimes_{\OO_M}\dfrac{\hat{\Omega}^1_{\OO_M}(\log)}{\hat{\Omega}^1_{\OO_M}(\log)_\tor}\right)}
	\] 
	is $\delta(L/M)-\delta_\tor(L/M)$, 	we have that  
	the length of
	\[
	\dfrac{\hat{\Omega}^{r}_{\OO_L}(\log)}{\hat{\Omega}^{r}_{\OO_L}(\log)_\tor}\Bigg/{
		\left(\OO_L\otimes_{\OO_M}\dfrac{\hat{\Omega}^{r}_{\OO_M}(\log)}{\hat{\Omega}^{r}_{\OO_M}(\log)_\tor}\right)}
	\]
	is also $\delta(L/M)-\delta_\tor(L/M)$.
	Since 
	$\dfrac{\hat{\Omega}^{r}_{\OO_L}(\log)}{\hat{\Omega}^{r}_{\OO_L}(\log)_\tor}$ and 
	$\dfrac{\hat{\Omega}^{r}_{\OO_M}(\log)}{\hat{\Omega}^{r}_{\OO_M}(\log)_\tor}$ are both free of rank one, we have
	\[
	\dfrac{\hat{\Omega}^{r}_{\OO_L}(\log)}{\hat{\Omega}^{r}_{\OO_L}(\log)_\tor}=\m_L^{\delta_\tor(L/M) - \delta(L/M)} 
	\dfrac{\hat{\Omega}^{r}_{\OO_M}(\log)}{\hat{\Omega}^{r}_{\OO_M}(\log)_\tor}.
	\] 
	Therefore
	\begin{gather*}	
	\Tr_{L/M}\left(\m_L^{en-\delta_\tor(L/M)}\dfrac{\hat{\Omega}^{r}_{\OO_L}(\log)}{\hat{\Omega}^{r}_{\OO_L}(\log)_\tor}\right)= \\ \Tr_{L/M}\left(\m_L^{en-\delta_\tor(L/M)}\m_L^{\delta_\tor(L/M) - \delta(L/M)}\dfrac{\hat{\Omega}^{r}_{\OO_M}(\log)}{\hat{\Omega}^{r}_{\OO_M}(\log)_\tor}\right)=\\
	\Tr_{L/M}\left(\m_L^{en-\delta(L/M)}\right)\dfrac{\hat{\Omega}^{r}_{\OO_M}(\log)}{\hat{\Omega}^{r}_{\OO_M}(\log)_\tor}.\end{gather*} 
	Let $\tilde{\delta}(L/M)$ be the length of the $\OO_L$-module  $\hat{\Omega}^1_{\OO_L/\OO_M}$. Since 
	\[
	\Tr_{L/M}\left(\m_L^{e(n+1)-\tilde{\delta}(L/M)-1}\right)=\m_M^n
	\]
	and $\tilde{\delta}(L/M)=\delta(L/M) +e -1$, we get
	\[
	\Tr_{L/M}\left(\m_L^{en-\delta(L/M)}\right)=\m_M^n.
	\]
	Hence
	\[
	\Tr_{L/M}\left(\m_L^{en-\delta_\tor(L/M)}\dfrac{\hat{\Omega}^{r}_{\OO_L}(\log)}{\hat{\Omega}^{r}_{\OO_L}(\log)_\tor}\right)=\m_M^n\dfrac{\hat{\Omega}^{r}_{\OO_M}(\log)}{\hat{\Omega}^{r}_{\OO_M}(\log)_\tor}.
	\] 	
	
	The second equality is obtained similarly.
\end{proof}

We shall now review $q$-dimensional local fields (for more on this subject, see \cite{zhukov2000, morrow2010explicit}). Subsequently, we shall use $q$-dimensional local fields to construct some residue maps. 

Let $K$ be a complete discrete valuation field. The field $K\{\{T\}\}$  is defined as the set
\begin{align*}
K&\{\{T\}\}=\\&\left\{\sum\limits_{i=-\infty}^\infty a_i T^i : a_i \in K, \ \inf v_K(a_i) > - \infty, \text{ and } v_K(a_i)\to \infty \text{ as } i \to - \infty\right\}
\end{align*}\label{kx}
with addition and multiplication as follows:
\[
\sum\limits_{i=-\infty}^\infty a_i T^i + \sum\limits_{i=-\infty}^\infty b_i T^i = \sum\limits_{i=-\infty}^\infty (a_i+b_i) T^i
\]
and
\[
\sum\limits_{i=-\infty}^\infty a_i T^i \sum\limits_{i=-\infty}^\infty b_i T^i = \sum\limits_{i=-\infty}^\infty\sum\limits_{j=-\infty}^\infty a_jb_{i-j}  T^i.
\]
We can define a discrete valuation on $K\{\{T\}\}$ by setting 
\[
v_{K\{\{T\}\}}\left(\sum\limits_{i=-\infty}^\infty a_i T^i\right)= \min v_K(a_i).
\]
Endowed with this valuation, $K\{\{T\}\}$ becomes a complete discrete valuation field with residue field $k((T))$. 

When $K$ is a local field, the field
\[
K\{\{T_1\}\}\cdots\{\{T_{m}\}\}((T_{m+1}))\cdots((T_{q-1})),
\]
where $1\leq m \leq q-1$, is a $q$-dimensional local field. Fields of this form are called standard
$q$-dimensional local fields.  

We shall now make the constructions necessary for defining
a residue map
\[
\Res_{L/K}:\hat{\Omega}^{q-1}_{\OO_L} (\log)\to \OO_{K}
\]
for a finite extension $L$ of 
$K\{\{T_1\}\}\cdots\{\{T_{q-1}\}\}$, where $K$ is a local field of mixed characteristic.

\begin{definition} Let $K$ be a complete discrete valuation field and $L_0=K, L_1=K\{\{T_1\}\}, \ldots$, $L= L_{q-1}= K\{\{T_1\}\}\cdots\{\{T_{q-1}\}\}$. Define 
	\[
	c_{L_i/L_{i-1}}:L_i\to L_{i-1}
	\]
	by 
	\[
	c_{L_i/L_{i-1}}\left(\sum_{k\in\Z}a_k T_i^k\right)=a_0.
	\]
	Then define $c_{L/K}= c_{L_1/L_0}\circ\cdots\circ c_{L_{q-1}/L_{q-2}}$.
\end{definition}

\begin{definition} \label{standard residue} Let $K$ be a local field of mixed characteristic and $L_0=K, L_1=K\{\{T_1\}\}, \ldots$, $L= L_{q-1}= K\{\{T_1\}\}\cdots\{\{T_{q-1}\}\}$. Define the residue map $\Res_{L_i/L_{i-1}}$ as the composition \[\hat{\Omega}^{i}_{\OO_{L_i}}(\log)\to\hat{\Omega}^{i}_{\OO_{L_i}/\OO_{L_{i-1}}}(\log) \to \hat{\Omega}^{i-1}_{\OO_{L_{i-1}}}(\log),\] where $\hat{\Omega}^{i}_{\OO_{L_i}/\OO_{L_{i-1}}}(\log) \to \hat{\Omega}^{i-1}_{\OO_{L_{i-1}}}(\log)$ is the homomorphism that satisfies 			
	\[
	a d\log T_1\wedge\cdots\wedge d\log T_{i} \mapsto c_{L_i/L_{i-1}}(a)d\log T_1\wedge\cdots\wedge d\log T_{i-1}
	\]
	for $a\in\OO_{L_i}$.
	Then define	
	\[
	\Res_{L/K}:\hat{\Omega}^{q-1}_{\OO_L} (\log)\to \OO_{K}
	\]
	as the composition \[\Res_{L/K} = \Res_{L_1/L_0}\circ \cdots\circ\Res_{L_{q-1}/L_{q-2}}.\]
	It induces 
	\[
	\Res_{L/K}:\hat{\Omega}^{q-1}_{\OO_L} (\log)\otimes_{\OO_L}L\to K.
	\]
\end{definition}

\begin{definition} \label{def:residueM}Let $L$ be a finite extension of $M=K\{\{T_1\}\}\cdots\{\{T_{q-1}\}\}$, where $K$ is a local field of mixed characteristic.   Define the residue map 
	\[
	\Res_{L/K}:\hat{\Omega}^{q-1}_{\OO_L} (\log)\otimes_{\OO_L}L\to K
	\]
	by
	\[
	\Res_{L/K}=\Res_{M/K}\circ \Tr_{L/M}.
	\]	
\end{definition}

\begin{remark}
	In Definition \ref{def:residueM}, $\Res_{L/K}$ is expected to be independent of $M$. Independence has been proven when $L$ is a two-dimensional local field (\cite[2.3.3]{morrow2010explicit}), but appears to remain open in the general case. This property shall not be necessary for us.   
\end{remark}

We will now start to obtain some properties of the trace and residue maps that will be necessary for the proof of the main theorem of this section.

\begin{proposition}\label{the:res}
	Let $L$ be a complete discrete valuation field that is a finite extension of $M=K\{\{T_1\}\}\cdots\{\{T_{q-1}\}\}$, where $K$ is a local field of mixed characteristic. Write $e= e(L/K)$.
	Then, for any integer $n$,
	\[
	\Res_{L/K}\left(\m_L^{ne-\delta_\tor(L/K)}\dfrac{\hat{\Omega}^{q-1}_{\OO_L}(\log)}{\hat{\Omega}^{q-1}_{\OO_L}(\log)_\tor}
	\right) = \m_K^n
	\]
	and
	\[
	\Res_{L/K}\left(\m_L^{ne-\delta_\tor(L/K)+1}\dfrac{\hat{\Omega}^{q-1}_{\OO_L}(\log)}{\hat{\Omega}^{q-1}_{\OO_L}(\log)_\tor}\right) = \m_K^{n+1}.
	\]
\end{proposition}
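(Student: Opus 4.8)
The plan is to peel the standard tower $M=K\{\{T_1\}\}\cdots\{\{T_{q-1}\}\}$ off the residue map using its very definition $\Res_{L/K}=\Res_{M/K}\circ\Tr_{L/M}$ (Definition \ref{def:residueM}), so that the assertion for $L/K$ splits into the trace computation for the \emph{finite} extension $L/M$ (which is precisely Lemma \ref{lemma:trace}) followed by an explicit evaluation of $\Res_{M/K}$ on the standard tower. Before doing this I would record the two compatibilities that make the bookkeeping work. Since the valuation of $K\{\{T\}\}$ restricts to $v_K$ and carries a prime of $K$ to a prime, every step of the tower has ramification index $1$; hence $e(M/K)=1$ and $e(L/K)=e(L/M)=:e$. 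Moreover, the residue field extension $l_L/l_M$ is finite, so $[l_L:l_L^p]=[l_M:l_M^p]=p^{q-1}$, and Lemma \ref{lemma:trace} applies to $L/M$ with $r=q-1$. The more delicate point is that $\delta_\tor(L/K)=\delta_\tor(L/M)$: from the structure description of $\hat{\Omega}^1_{\OO_M}(\log)$ recalled above (with lifted $p$-basis $\{T_1,\dots,T_{q-1}\}$) the natural map $\OO_M\otimes_{\OO_K}\hat{\Omega}^1_{\OO_K}(\log)_\tor\to\hat{\Omega}^1_{\OO_M}(\log)_\tor$ is an isomorphism, i.e.\ $\delta_\tor(M/K)=0$; tensoring up to $\OO_L$ then identifies $\OO_L\otimes_{\OO_K}\hat{\Omega}^1_{\OO_K}(\log)_\tor$ with $\OO_L\otimes_{\OO_M}\hat{\Omega}^1_{\OO_M}(\log)_\tor$ inside $\hat{\Omega}^1_{\OO_L}(\log)_\tor$, so the two cokernels, namely $\delta_\tor(L/K)$ and $\delta_\tor(L/M)$, coincide.

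Granting these, I would apply the first equality of Lemma \ref{lemma:trace} to $L/M$ with the given integer $n$ (using $r=q-1$, $e(L/M)=e$, $\delta_\tor(L/M)=\delta_\tor(L/K)$) to obtain
\[
\Tr_{L/M}\!\left(\m_L^{ne-\delta_\tor(L/K)}\frac{\hat{\Omega}^{q-1}_{\OO_L}(\log)}{\hat{\Omega}^{q-1}_{\OO_L}(\log)_\tor}\right)=\m_M^{n}\,\frac{\hat{\Omega}^{q-1}_{\OO_M}(\log)}{\hat{\Omega}^{q-1}_{\OO_M}(\log)_\tor},
\]
so it only remains to show $\Res_{M/K}\!\left(\m_M^{n}\,\hat{\Omega}^{q-1}_{\OO_M}(\log)/\hat{\Omega}^{q-1}_{\OO_M}(\log)_\tor\right)=\m_K^n$. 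The torsion-free quotient $\hat{\Omega}^{q-1}_{\OO_M}(\log)/\hat{\Omega}^{q-1}_{\OO_M}(\log)_\tor$ is free of rank one on $\omega_0:=d\log T_1\wedge\cdots\wedge d\log T_{q-1}$ (the top wedge of the free part), and unwinding Definition \ref{standard residue} — each $\Res_{L_i/L_{i-1}}$ kills the torsion forms, which all involve $d\log\pi_K$, and sends $a\,d\log T_1\wedge\cdots\wedge d\log T_i$ to $c_{L_i/L_{i-1}}(a)\,d\log T_1\wedge\cdots\wedge d\log T_{i-1}$ — gives $\Res_{M/K}(a\,\omega_0)=c_{M/K}(a)$ for $a\in\OO_M$. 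Thus the claim reduces to the elementary fact $c_{M/K}(\m_M^n)=\m_K^n$: an element of $\m_M^n$ has, by the definition of the valuation on each $K\{\{T\}\}$, all of its $K$-coefficients in $\m_K^n$, so its $T_1^0\cdots T_{q-1}^0$-coefficient lies in $\m_K^n$; conversely, any $x\in\m_K^n\subseteq\OO_M$ satisfies $c_{M/K}(x)=x$.

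The second equality is proved in exactly the same way, using instead the second equality of Lemma \ref{lemma:trace} (which gives $\Tr_{L/M}(\m_L^{ne-\delta_\tor(L/K)+1}\cdot(-))=\m_M^{n+1}\cdot(-)$) together with $c_{M/K}(\m_M^{n+1})=\m_K^{n+1}$. I expect the only genuinely non-formal ingredient to be the pair of compatibilities $e(L/K)=e(L/M)$ and $\delta_\tor(L/K)=\delta_\tor(L/M)$ — the statement that the transcendental tower $M/K$ is invisible to these ramification invariants — which rests on the explicit structure of $\hat{\Omega}^1_{\OO_M}(\log)$ and the fact that the new generators $d\log T_i$ sit in the free part; once these are in hand, everything else is Lemma \ref{lemma:trace} plus manipulation of the standard residue and of Laurent-type coefficients.
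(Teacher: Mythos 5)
Your proposal is correct and follows essentially the same route as the paper: decompose $\Res_{L/K}=\Res_{M/K}\circ\Tr_{L/M}$, identify $\delta_\tor(L/K)=\delta_\tor(L/M)$ from the structure of $\hat{\Omega}^{1}_{\OO_M}(\log)$ (whose torsion is generated by $d\log\pi_K$), apply Lemma \ref{lemma:trace} to the finite extension $L/M$, and evaluate $\Res_{M/K}$ on the rank-one free quotient. The extra details you supply (that $e(M/K)=1$ and the explicit computation $c_{M/K}(\m_M^n)=\m_K^n$) are consistent with, and merely flesh out, the paper's argument.
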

\begin{proof}We shall prove the first equality; the second is obtained in a similar way. 
	
	Observe that 
	$\Res_{L/K}=\Res_{M/K}\circ \Tr_{L/M}$. Further, $\hat{\Omega}^{q-1}_{\OO_M}(\log)$ is generated by $dT_i$ and $d\log\pi_K$, and its torsion part is generated by $d\log\pi_K$. Thus we have an isomorphism $\OO_M \otimes_{\OO_K} \hat{\Omega}^1_{\OO_K}(\log) \simeq \hat{\Omega}^{q-1}_{\OO_M}(\log)_\tor$. We get, by definition,
	\[
	\delta_\tor(L/K)= \delta_\tor(L/M).
	\]
	Then, using Lemma \ref{lemma:trace}, we get
	\begin{align*}
	\Res_{L/K}\left(\m_L^{ne-\delta_\tor(L/K)}\dfrac{\hat{\Omega}^{q-1}_{\OO_L}(\log)}{\hat{\Omega}^{q-1}_{\OO_L}(\log)_\tor}
	\right) &=\\ \Res_{M/K}\left(\Tr_{L/M}\left(\m_L^{ne-\delta_\tor(L/K)}\dfrac{\hat{\Omega}^{q-1}_{\OO_L}(\log)}{\hat{\Omega}^{q-1}_{\OO_L}(\log)_\tor}\right)
	\right)&=\\
	\Res_{M/K}\left(\m_M^n\dfrac{\hat{\Omega}^{q-1}_{\OO_M}(\log)}{\hat{\Omega}^{q-1}_{\OO_M}(\log)_\tor}
	\right)&=\m_K^n.
	\qedhere \end{align*}
\end{proof}

\begin{proposition} \label{cor:2}
	Let $L$, $K$,  and $e$ be as in Proposition  \ref{the:res}.    Then, if $n\in\N$ satisfies  
	\[n\geq \frac{\delta_\tor(L/K)}{e},\]
	we have 
	\[
	\Res_{L/K}\left(\m_L^{ne-\delta_\tor(L/K)}\hat{\Omega}^{q-1}_{\OO_L}(\log)
	\right) = \m_K^n
	\]
	and
	\[
	\Res_{L/K}\left(\m_L^{ne-\delta_\tor(L/K)+1}\hat{\Omega}^{q-1}_{\OO_L}(\log)\right) = \m_K^{n+1}.
	\]
\end{proposition}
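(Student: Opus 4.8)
The plan is to reduce Proposition \ref{cor:2} to Proposition \ref{the:res} by controlling the effect of the torsion submodule $\hat{\Omega}^{q-1}_{\OO_L}(\log)_\tor$ on the residue map. The inclusion $\m_L^{ne-\delta_\tor(L/K)}\tfrac{\hat{\Omega}^{q-1}_{\OO_L}(\log)}{\hat{\Omega}^{q-1}_{\OO_L}(\log)_\tor}$ gives, upon applying $\Res_{L/K}$, exactly $\m_K^n$ by Proposition \ref{the:res}; so the first thing I would note is that $\Res_{L/K}\bigl(\m_L^{ne-\delta_\tor(L/K)}\hat{\Omega}^{q-1}_{\OO_L}(\log)\bigr) \supseteq \m_K^n$, and likewise for the "$+1$" statement. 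It therefore remains to prove the reverse inclusion, i.e.\ that the torsion does not enlarge the image.

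The key step is to understand $\Res_{L/K}$ restricted to the torsion part. Recall that $\hat{\Omega}^1_{\OO_L}(\log) \cong \hat M \oplus \OO_L/\m_L^a\OO_L$ with $\hat M$ completed free, so $\hat{\Omega}^{q-1}_{\OO_L}(\log)_\tor$ is itself cyclic torsion; concretely, after choosing the lift $B$ of a $p$-basis, the torsion of $\hat{\Omega}^{q-1}_{\OO_L}(\log)$ is generated by a wedge of $d\log$'s involving the lifted $p$-basis element $b_\mu$ that becomes torsion. Since $\Res_{L/K}=\Res_{M/K}\circ\Tr_{L/M}$ and, as shown inside the proof of Proposition \ref{the:res}, $\hat{\Omega}^{q-1}_{\OO_M}(\log)_\tor \cong \OO_M\otimes_{\OO_K}\hat{\Omega}^1_{\OO_K}(\log)$ maps under $\Res_{M/K}$ into $\OO_K$ — in fact the whole torsion sits inside the image of $\OO_L\otimes_{\OO_K}\hat{\Omega}^1_{\OO_K}(\log)_\tor$ — I would trace through valuations to show that an element of $\m_L^{ne-\delta_\tor(L/K)}\hat{\Omega}^{q-1}_{\OO_L}(\log)_\tor$ has residue already in $\m_K^n$. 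The cleanest way is to split $\hat{\Omega}^{q-1}_{\OO_L}(\log) = F \oplus T$ with $F$ the free complement appearing in the Kurihara–Kato decomposition and $T$ the torsion; then $\Res_{L/K}$ on $F$ is handled by Proposition \ref{the:res} (noting $F$ is a lattice in $\tfrac{\hat{\Omega}^{q-1}_{\OO_L}(\log)}{(\cdot)_\tor}$ and the two lattices $F$ and the torsion-free quotient differ by a bounded amount absorbed into $\delta_\tor$), and $\Res_{L/K}$ on $T$ factors through the structure $T\cong \OO_L\otimes_{\OO_K}\hat{\Omega}^1_{\OO_K}(\log)_\tor$, whose residues are manifestly in $\OO_K$ with the expected valuation bound because $\Tr_{L/M}$ on $\m_L^{ne-\delta_\tor}\cdot(\text{torsion lattice})$ lands in $\m_M^n\cdot(\text{torsion of }M)$ by the same length/trace bookkeeping used in Lemma \ref{lemma:trace} and Proposition \ref{the:res}. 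Combining the two summands gives $\Res_{L/K}\bigl(\m_L^{ne-\delta_\tor(L/K)}\hat{\Omega}^{q-1}_{\OO_L}(\log)\bigr) \subseteq \m_K^n$, and together with the reverse inclusion already noted this is the first claimed equality. The "$+1$" equality follows by the identical argument applied with $n$ shifted, using the second equality of Proposition \ref{the:res}.

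I expect the main obstacle to be bookkeeping the decomposition $\hat{\Omega}^{q-1}_{\OO_L}(\log)=F\oplus T$ compatibly with the residue map: one must check that choosing a completed-free complement $F$ does not cost more valuation than $\delta_\tor(L/K)$ already accounts for (so that Proposition \ref{the:res}, which is stated for the torsion-free quotient, can be applied to $F$ verbatim), and that $\Res_{L/K}$ genuinely respects the direct sum in the sense needed — a priori the splitting is only $\OO_L$-linear, not canonical, and $\Tr_{L/M}$ need not preserve it. The hypothesis $n\geq \delta_\tor(L/K)/e$ is precisely what guarantees that $\m_L^{ne-\delta_\tor(L/K)}$ is an honest (non-negative-power) ideal of $\OO_L$, so that the filtration statements make sense; I would invoke it at the outset and keep it in force throughout.
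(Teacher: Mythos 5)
Your reduction to Proposition \ref{the:res} is the right starting point, but the bulk of your argument --- the splitting $\hat{\Omega}^{q-1}_{\OO_L}(\log)=F\oplus T$ and the trace/length bookkeeping meant to show that the torsion ``does not enlarge the image'' --- attacks a problem that does not exist, and you yourself flag its central difficulty (that the splitting is non-canonical and need not be respected by $\Tr_{L/M}$) without resolving it. The point you are missing is that $\Res_{L/K}$ is by Definition \ref{def:residueM} a map on $\hat{\Omega}^{q-1}_{\OO_L}(\log)\otimes_{\OO_L}L$, namely $\Res_{M/K}\circ\Tr_{L/M}$ with the trace taken on forms tensored up to $L$. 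Torsion elements die in $\hat{\Omega}^{q-1}_{\OO_L}(\log)\otimes_{\OO_L}L$, so every torsion class has residue $0$ and the residue map factors through the torsion-free quotient. Once $en-\delta_\tor(L/K)\geq 0$ --- which is exactly what the hypothesis $n\geq \delta_\tor(L/K)/e$ provides --- the set $\m_L^{en-\delta_\tor(L/K)}\hat{\Omega}^{q-1}_{\OO_L}(\log)$ is an honest submodule whose image in $\hat{\Omega}^{q-1}_{\OO_L}(\log)\otimes_{\OO_L}L$ is precisely $\m_L^{en-\delta_\tor(L/K)}\bigl(\hat{\Omega}^{q-1}_{\OO_L}(\log)/\hat{\Omega}^{q-1}_{\OO_L}(\log)_\tor\bigr)$, so both displayed equalities are literally the statement of Proposition \ref{the:res}. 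That is the paper's entire (one-line) proof; both of your inclusions, not just ``$\supseteq$'', are immediate.

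Beyond being unnecessary, the torsion analysis you sketch is not sound as written: the identification of the torsion with $\OO_L\otimes_{\OO_K}\hat{\Omega}^1_{\OO_K}(\log)_\tor$ is established in the paper only for the standard field $M$, not for a general finite extension $L$ of $M$, and ``$\Tr_{L/M}$ applied to the torsion lattice'' is not defined, since the trace on differential forms is only used after tensoring with $L$, where the torsion has already vanished. Replacing that whole discussion by the observation that torsion maps to $0$ closes the gap you acknowledge and collapses your proof to the intended corollary of Proposition \ref{the:res}.
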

\begin{proof}In this case $en-\delta_\tor(L/K)\geq 0$, so this follows from  Proposition \ref{the:res}.
\end{proof}

We will now use the previous properties of residue and trace maps,  the exponential map defined by M. Kurihara (\cite{kurihara1998exponential}), and a modification of higher dimensional class field theory to prove that, when $L$ is a $q$-dimensional local field that is  a finite extension  of  $K\{\{T_1\}\}\cdots\{\{T_{q-1}\}\}$, Main Result \ref{main re mix} holds. This will then be used to prove the general result. We start with the following theorem:

\begin{theorem} \label{key2}
	Let $L$ be a $q$-dimensional local field that is a finite extension of  $M=K\{\{T_1\}\}\cdots\{\{T_{q-1}\}\}$, where $K$ is a local field of mixed characteristic with residue field $k$ of characteristic $p>0$. 
	Write $e= e(L/K)$.  
	Assume that $n\in\N$ satisfies  
	\[n\geq \frac{2e_K}{p-1}+\frac{1}{e}\]
	and let $n'\in\N$ be such that $n'\geq \frac{\delta_\tor(L/K)}{e}$ . Take $\eta\in\OO_K$ such that $v_K(\eta)=n$. 
	
	Then we have a commutative diagram 
	\begin{center}
		\begin{tikzcd}
			\m_L^{en'-\delta_\tor(L/K)}	\hat{\Omega}^{q-1}_{\OO_{L}}(\log) \arrow[r, "\exp_\eta"] \arrow[d, "\Res_{L/K}"] 
			& \hat{K}_q(L) \arrow[d, "\Res_{L/K}"] \\
			\m_K^{n'} \arrow[r, "\exp_\eta"]
			& K^\times
		\end{tikzcd}
	\end{center}
	where the right vertical arrow is the residue homomorphism from $K$-theory defined in \cite{kato1983residue} and the top and bottom horizontal maps are, respectively, the exponential maps $\exp_{\eta,q}$ and $\exp_{\eta,1}$ defined in \cite{kurihara1998exponential}. Further, the left vertical arrow is surjective. 
\end{theorem}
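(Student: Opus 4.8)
The plan is to reduce the statement to the two-dimensional case already handled in Theorem \ref{the:com} by an inductive dévissage along the chain $M = L_{q-1} \supset L_{q-2} \supset \cdots \supset L_1 \supset L_0 = K$, together with a final reduction of the finite extension $L/M$. First I would establish the commutativity of the ``standard'' squares: for each $i$, the diagram relating $\exp_{\eta,i+1}\colon \hat{\Omega}^i_{\OO_{L_i}}(\log)\to \hat{K}_{i+1}(L_i)$ and $\exp_{\eta,i}\colon \hat{\Omega}^{i-1}_{\OO_{L_{i-1}}}(\log)\to \hat{K}_i(L_{i-1})$, with vertical maps $\Res_{L_i/L_{i-1}}$ (on differentials, as in Definition \ref{standard residue}) and the $K$-theoretic residue of \cite{kato1983residue}, should commute. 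This is a direct computation on generators: $\Res_{L_i/L_{i-1}}$ sends $a\, d\log T_1\wedge\cdots\wedge d\log T_i$ to $c_{L_i/L_{i-1}}(a)\, d\log T_1\wedge\cdots\wedge d\log T_{i-1}$, while the $K$-theory residue sends $\{\exp(\eta a), T_1,\ldots, T_i\}$ to the symbol obtained by ``removing'' $T_i$ and picking out the relevant coefficient; the explicit formula for $\exp$ on symbols (from \cite{kurihara1998exponential}) together with the defining property of the residue symbol makes the two routes agree. Iterating over $i = 1,\ldots, q-1$ and composing gives the commutative square for $M/K$, i.e. the case $L = M$.

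Next I would handle the finite extension $L/M$. By Definition \ref{def:residueM} the residue $\Res_{L/K}$ factors as $\Res_{M/K}\circ \Tr_{L/M}$ on differentials, and the $K$-theoretic residue factors through the transfer (norm) map $N_{L/M}\colon \hat{K}_q(L)\to \hat{K}_q(M)$ by functoriality of the residue homomorphism of \cite{kato1983residue} under finite extensions. So it suffices to check that the square relating $\exp_\eta$ on $\hat{\Omega}^{q-1}_{\OO_L}(\log)$ and on $\hat{\Omega}^{q-1}_{\OO_M}(\log)$, with vertical maps $\Tr_{L/M}$ and $N_{L/M}$, commutes. This is the key compatibility of Kurihara's exponential map with trace/norm, which is exactly what is needed (and is the mixed-characteristic analogue of the projection formula); I would invoke or re-derive it from \cite{kurihara1998exponential}, using that $\eta\in\OO_K\subset\OO_M$ so the same $\eta$ governs both exponentials and that the ramification bound $n\geq \tfrac{2e_K}{p-1}+\tfrac1e$ forces $v_L(\eta)= en \geq \tfrac{2e_L}{p-1}+1$ (using $e_L = e\,e_K$ and $e \geq 1$), so both $\exp_{\eta}$ are defined. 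Combining the $L/M$ square with the already-established $M/K$ square gives the desired commutative diagram for $L/K$.

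It remains to address the domains: the top-left corner is $\m_L^{en'-\delta_\tor(L/K)}\hat{\Omega}^{q-1}_{\OO_L}(\log)$ and the bottom-left is $\m_K^{n'}$, and the hypothesis $n'\geq \delta_\tor(L/K)/e$ guarantees $en'-\delta_\tor(L/K)\geq 0$ so the submodule makes sense. Surjectivity of the left vertical arrow $\Res_{L/K}\colon \m_L^{en'-\delta_\tor(L/K)}\hat{\Omega}^{q-1}_{\OO_L}(\log)\to \m_K^{n'}$ is precisely the content of Proposition \ref{cor:2} (applied with $n=n'$), so I would simply cite it. One must also check that the maps restrict correctly to these submodules: the differential residue maps their filtration pieces as recorded in Proposition \ref{the:res}, and by Lemma \ref{lemma:exp} $\exp_\eta$ carries $\m_L^{en'-\delta_\tor(L/K)}\hat{\Omega}^{q-1}_{\OO_L}(\log)$ into $U^{e(n+n')-\delta_\tor(L/K)}\hat{K}_q(L)$ and $\m_K^{n'}$ into $U_K^{n+n'}$, so all four corners and all four arrows of the diagram are as claimed.

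I expect the main obstacle to be the compatibility of Kurihara's exponential map with the transfer map in $K$-theory (the $L/M$ square): unlike the ``standard'' squares, which are explicit on generators, this requires tracking how $\exp_\eta$ interacts with $\Tr_{L/M}$ on differential forms versus $N_{L/M}$ on symbols, and verifying that the compatibility persists after passing to the inverse limits defining $\hat{K}_q$. If \cite{kurihara1998exponential} does not state this in the exact form needed, I would prove it by reducing modulo $\m_L^N$ for large $N$, where $\exp_\eta$ becomes a polynomial map on the graded pieces $\m_L^j/\m_L^{j+1}$ described by the surjections in the proof of Lemma \ref{lemma:exp}, and then checking the projection-formula identity $N_{L/M}\{1+\alpha\beta, b_1,\ldots,b_{q-1}\}$ against $\{1+(\Tr_{L/M}\,\text{-image}), \ldots\}$ level by level, which is a standard (if tedious) computation with the norm map on Milnor $K$-theory.
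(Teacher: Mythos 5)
Your proposal follows essentially the same route as the paper: the diagram is split into a top square for the finite extension $L/M$ (whose commutativity, relating $\exp_\eta$ to $\Tr_{L/M}$ and $N_{L/M}$, is simply cited from \cite{kurihara1998exponential}) and a bottom square for $M/K$ handled by dévissage along the chain $K\{\{T_1\}\}\subset\cdots\subset M$ using Kato's residue formula on symbols, with surjectivity coming from Proposition \ref{cor:2}. The only cosmetic difference is your framing of this as a reduction to Theorem \ref{the:com}, whereas the paper proves the general case directly (and omits the proof of Theorem \ref{the:com} as a special case); also note that the generator computation for the standard squares requires a truncation-and-continuity argument for the negative-index part $a_-$, which your sketch passes over but which is routine.
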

\begin{proof}
	First, observe that the condition
	\[
	n\geq \frac{2e_K}{p-1} + \frac{1}{e}
	\]
	implies
	\[
	en \geq \frac{2e_K e}{p-1}+1=\frac{2e_L}{p-1}+1.
	\] 
	Therefore this condition guarantees the convergence of both the top and the bottom exponential maps (by Theorem 0.1 in \cite{kurihara1998exponential}). Furthermore, the condition 
	\[
	n'\geq   \frac{\delta_\tor(L/K)}{e}
	\] 
	guarantees that we can apply Proposition \ref{cor:2}.

	We need to prove that the diagram  
	\begin{center}
		\begin{tikzcd}
			\m_L^{en'-\delta_\tor(L/K)}	\hat{\Omega}^{q-1}_{\OO_{L}}(\log) \arrow[r,"\exp_\eta"] \arrow[d, "\Tr_{L/M}"] 
			& \hat{K}_q(L) \arrow[d, "N_{L/M}"] 								
			\\
			\m_M^{n'}	\hat{\Omega}^{q-1}_{\OO_{M}}(\log) \arrow[r, "\exp_\eta"] \arrow[d, "\Res_{M/K}"] 
			& \hat{K}_q(M) \arrow[d, "\Res_{M/K}"] \\
			\m_K^{n'} \arrow[r, "\exp_\eta"]
			& K^\times
		\end{tikzcd}
	\end{center}
	commutes. 
	
	By Proposition \ref{cor:2}, the map $\Res_{L/K}:	\hat{\Omega}^{q-1}_{\OO_{L}}(\log)\to \OO_K$ induces a surjection \[\Res_{L/K}:	\m_L^{en'-\delta_\tor(L/K)}	\hat{\Omega}^{q-1}_{\OO_{L}}(\log)\twoheadrightarrow  \m_K^{n'},\] and the map  $\Res_{M/K}:		\hat{\Omega}^{q-1}_{\OO_{M}}(\log)\to  \OO_K$ induces a surjection \[\Res_{M/K}:	\m_M^{n'}	\hat{\Omega}^{q-1}_{\OO_{M}}(\log)\twoheadrightarrow  \m_K^{n'}.\] A similar argument shows that
	$\Tr_{L/M}:	\hat{\Omega}^{q-1}_{\OO_{L}}(\log)\to	\hat{\Omega}^{q-1}_{\OO_{M}}(\log)$ induces 
	a surjection  \[\Tr_{L/M}:	\m_L^{en'-\delta_\tor(L/K)}	\hat{\Omega}^{q-1}_{\OO_{L}}(\log)\twoheadrightarrow	\m_M^{n'}	\hat{\Omega}^{q-1}_{\OO_{M}}(\log).\]

	The commutativity of the top square is shown in
	\cite{kurihara1998exponential}. 
	The commutativity of the bottom square can be checked explicitly as follows. Let $M_0=K, M_1=K\{\{T_1\}\}, \ldots,  M_{q-1}=M= K\{\{T_1\}\}\cdots\{\{T_{q-1}\}\}$. It is enough to show that each one of the squares in the diagram 
	\begin{center}
		\begin{tikzcd}
			\hat{\Omega}^{q-1}_{\OO_{M}}(\log) \arrow[r, "\exp_\eta"] \arrow[d,  "\Res_{M/M_{q-2}}"] 
			& \hat{K}_q(M) \arrow[d, "\Res_{M/M_{q-2}}"] \\
			\hat{\Omega}^{q-2}_{\OO_{M_{q-2}}}(\log) \arrow[r, "\exp_\eta"] \arrow[d,  "\Res_{M_{q-2}/M_{q-3}}"] 
			& \hat{K}_{q-1}(M_{q-2}) \arrow[d, "\Res_{M_{q-2}/M_{q-3}}"]\\
			\vdots \arrow[d, "\Res_{M_{2}/M_{1}}"]& \vdots \arrow[d, "\Res_{M_{2}/M_{1}}"]
			\\
			\hat{\Omega}^{1}_{\OO_{M_{1}}}(\log)\arrow[r, "\exp_\eta"] \arrow[d, "\Res_{M_{1}/K}"] 
			&  \hat{K}_{2}(M_{1})  \arrow[d, "\Res_{M_{1}/K}"]\\
			
			\OO_K \arrow[r, "\exp_\eta"]
			& K^\times
		\end{tikzcd}
	\end{center}
	commutes. 
	
	Let $a\in\OO_{M_i}$ and write \[a= \sum\limits_{k<0} a_k T_i^k + a_0 + \sum\limits_{k>0} a_k T_i^k,\]
	where $a_k \in \OO_{M_{i-1}}$ for every $k\in\Z$. Put $a_{-}=\sum\limits_{k<0} a_k T_i^k$ and $a_+=\sum\limits_{k>0} a_k T_i^k$.
	Observe first that, since
	\begin{center}
		\begin{tikzcd}
			\hat{K}_{i}(M_{i-1}) \arrow[rr, "\{ \, {,}  T_i\}"] && \hat{K}_{i+1}(M_{i}) \arrow[rr, "\Res_{M_i/M_{i-1}}"] && 	\hat{K}_{i}(M_{i-1})
		\end{tikzcd}
	\end{center} 
	is the identity map (\cite[Theorem 1]{kato1983residue}), we get
	\begin{align*} \Res_{M_i/M_{i-1}}\circ\exp_\eta\left(a_0 \frac{d T_1}{T_1}\wedge\cdots\wedge \frac{d T_{i}}{T_{i}}\right)&= \\ \Res_{M_i/M_{i-1}}\{\exp(\eta a_0),T_1,\ldots,T_{i}\}=\{\exp(\eta a_0),T_1,\ldots,T_{i-1}\}&=\\\exp_\eta\circ\Res_{M_i/M_{i-1}}\left(a_0 \frac{d T_1}{T_1}\wedge\cdots\wedge \frac{d T_{i}}{T_{i}}\right).
	\end{align*}
	Further, the same theorem gives 
	\begin{align*}
	\Res_{M_i/M_{i-1}}\circ\exp_\eta\left(a_+ \frac{d T_1}{T_1}\wedge\cdots\wedge \frac{d T_{i}}{T_{i}}\right)
	&= \\
	\exp_\eta\circ\Res_{M_i/M_{i-1}}\left(a_+ \frac{d T_1}{T_1}\wedge\cdots\wedge \frac{d T_{i}}{T_{i}}\right)&=0.
	\end{align*}
	We will now show that we also have
	\begin{align*}
	\Res_{M_i/M_{i-1}}\circ\exp_\eta\left(a_- \frac{d T_1}{T_1}\wedge\cdots\wedge \frac{d T_{i}}{T_{i}}\right)
	&=\\
	\exp_\eta\circ\Res_{M_i/M_{i-1}}\left(a_- \frac{d T_1}{T_1}\wedge\cdots\wedge \frac{d T_{i}}{T_{i}}\right)&=0.
	\end{align*}
	From Theorem 1 in \cite{kato1983residue}, 
	we have,  for $k\in\Z_{<0}$ and $m\in \N$,
	\[	\Res_{M_i/M_{i-1}}\left\{1 + \eta a_kT_i^k+\cdots + \frac{(\eta a_k)^mT_i^{mk}}{m!},T_1,\ldots,T_{i}\right\}= 0.\] 
	Since $v_{M_{i-1}}((\eta a_k)^m/m!)\to \infty$ and the residue map is continuous, we have 
	\[ \label{equation}\tag{$\ast$}
	\Res_{M_i/M_{i-1}}\left\{\exp(\eta a_k T_i^k),T_1,\ldots,T_{i}\right\}= 0.
	\]	 
	Given $k\in \Z_{<0}$, write $s_{k}= \sum\limits_{k\leq k' <0} a_{k'}T_i^{k'}$. 
	 From  (\ref{equation}) we have  that  	\[\Res_{M_i/M_{i-1}}\left\{\exp(\eta s_k),T_1,\ldots,T_{i}\right\}= 0.\]
	By continuity 
	and $s_k\to a_-$, we get 
	\[
	\Res_{M_i/M_{i-1}}\left\{\exp(\eta a_-),T_1,\ldots,T_{i}\right\}= 0.
	\]	 	
	Hence we conclude that 
	\begin{align*}
	\Res_{M_i/M_{i-1}}\circ\exp_\eta\left(a \frac{d T_1}{T_1}\wedge\cdots\wedge \frac{d T_{i}}{T_{i}}\right)&=\\
	\exp_\eta\circ\Res_{M_i/M_{i-1}}\left(a \frac{d T_1}{T_1}\wedge\cdots\wedge \frac{d T_{i}}{T_{i}}\right)&.
	\end{align*}
	A similar argument shows that  
	\begin{align*}
	\Res_{M_i/M_{i-1}}\circ\exp_\eta\left(a \frac{d T_1}{T_1}\wedge\cdots\wedge \frac{d T_{i-1}}{T_{i-1}}\wedge \frac{d \pi_K}{\pi_K}\right)&=\\
	\exp_\eta\circ\Res_{M_i/M_{i-1}}\left(a \frac{d T_1}{T_1}\wedge\cdots\wedge \frac{d T_{i-1}}{T_{i-1}}\wedge \frac{d\pi_K}{\pi_K}\right)&=0,
	\end{align*}
	so we conclude that each square in the diagram is commutative. 
\end{proof}

We have now developed all the necessary tools in order to prove Proposition  \ref{most}, which states that Main Result \ref{main re mix} holds when 	  $L$ is a $q$-dimensional local field that is a  finite extension  of  $K\{\{T_1\}\}\cdots\{\{T_{q-1}\}\}$. We will then use
Proposition \ref{most} to prove Theorem \ref{main mixed}, which gives  Main Result \ref{main re mix} in full generality.

\begin{proposition} \label{most}
	Let $L$ be a $q$-dimensional local field that is a finite extension of   $M=K\{\{T_1\}\}\cdots\{\{T_{q-1}\}\}$, where $K$ is a local field of mixed characteristic with residue field $k$ of characteristic $p>0$.
	Assume that  $\chi\in H^1(K)$ is   such that \[\sw \chi \geq
	\dfrac{2e_K}{p-1}+\dfrac{1}{e(L/K)}+\left\lceil\dfrac{\delta_\tor(L/K)}{e(L/K)}\right\rceil.\] Denote by $\chi_L$ its image in $H^1(L)$. Then 
	\[\sw \chi_L = e(L/K) \sw \chi - \delta_\tor(L/K).\]
\end{proposition}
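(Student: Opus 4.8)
The plan is to run the argument of Theorem \ref{the:maintwo} essentially verbatim, with Theorem \ref{key2} playing the role of Theorem \ref{the:com} and with a suitable form of higher-dimensional local class field theory (the ``modified'' version alluded to in the introduction) playing the role of two-dimensional class field theory. Write $e=e(L/K)$, set $n'=\lceil \delta_\tor(L/K)/e\rceil$ and $n=\sw\chi-n'$, and choose $\eta\in\OO_K$ with $v_K(\eta)=n$. The hypothesis on $\sw\chi$ is exactly what makes $n\geq \frac{2e_K}{p-1}+\frac1e$ and $n'\geq \delta_\tor(L/K)/e$; since $n'$ is a ceiling we also have $en'-\delta_\tor(L/K)\geq 0$, and all of these persist with $n'$ replaced by $n'+1$. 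Thus Theorem \ref{key2} (applied to the pairs $(\eta,n')$ and $(\eta,n'+1)$) and Proposition \ref{cor:2} all apply.

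First I would assemble the residue computation. By Theorem \ref{key2}, for each $m'\in\{en'-\delta_\tor(L/K),\,en'-\delta_\tor(L/K)+1\}$ the square
\begin{center}
\begin{tikzcd}
\m_L^{m'}\hat\Omega^{q-1}_{\OO_L}(\log) \arrow[r,"\exp_\eta"] \arrow[d,"\Res_{L/K}"] & \hat{K}_q(L) \arrow[d,"\Res_{L/K}"] \\
\m_K^{m''} \arrow[r,"\exp_\eta"] & K^\times
\end{tikzcd}
\end{center}
commutes, where $m''=n'$ (resp.\ $n'+1$), the left vertical map is surjective by Proposition \ref{cor:2}, and by Lemma \ref{lemma:exp} the horizontal maps are surjective onto $U^{en+m'}\hat{K}_q(L)$ and $U_K^{n+m''}$ respectively (using $v_L(\eta)=en$). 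Since $n+n'=\sw\chi$, a diagram chase then yields
\begin{gather*}
\Res_{L/K}\bigl(U^{e\sw\chi-\delta_\tor(L/K)}\hat{K}_q(L)\bigr)=U_K^{\sw\chi},\\
\Res_{L/K}\bigl(U^{e\sw\chi-\delta_\tor(L/K)+1}\hat{K}_q(L)\bigr)=U_K^{\sw\chi+1}.
\end{gather*}

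Next I would feed this into class field theory. The key input is the existence of reciprocity maps $\rho_K\colon K^\times\to G_K^{\mathrm{ab}}$ and $\rho_L\colon \hat{K}_q(L)\to G_L^{\mathrm{ab}}$ for which the composite $\hat{K}_q(L)\xrightarrow{\rho_L}G_L^{\mathrm{ab}}\to G_K^{\mathrm{ab}}$ agrees with $\hat{K}_q(L)\xrightarrow{\Res_{L/K}}K^\times\xrightarrow{\rho_K}G_K^{\mathrm{ab}}$ (this is where $\Res_{L/K}$ plays the part usually played by a norm map). Since $\chi_L$ is, by definition, the pullback of $\chi$ along $G_L^{\mathrm{ab}}\to G_K^{\mathrm{ab}}$, it follows that $\chi_L(\rho_L(x))=\chi(\rho_K(\Res_{L/K}(x)))$ for all $x\in\hat{K}_q(L)$; hence for any subgroup $A\subseteq\hat{K}_q(L)$, the character $\chi_L$ kills $\rho_L(A)$ if and only if $\chi$ kills $\rho_K(\Res_{L/K}(A))$. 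Combining this with the characterization of the Swan conductor by the unit filtration --- $\sw\chi$ is the least $m$ with $\chi$ killing $\rho_K(U_K^{m+1})$, and $\sw\chi_L$ the least $m$ with $\chi_L$ killing $\rho_L(U^{m+1}\hat{K}_q(L))$ --- and taking $m=e\sw\chi-\delta_\tor(L/K)$, the two displayed identities give: $\chi_L$ kills $\rho_L(U^{m+1}\hat{K}_q(L))$ iff $\chi$ kills $\rho_K(U_K^{\sw\chi+1})$ (true), so $\sw\chi_L\leq m$; and $\chi_L$ kills $\rho_L(U^{m}\hat{K}_q(L))$ iff $\chi$ kills $\rho_K(U_K^{\sw\chi})$ (false), so $\sw\chi_L\geq m$. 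Therefore $\sw\chi_L=e\sw\chi-\delta_\tor(L/K)$.

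The main obstacle is the class field theory input of the third paragraph. One must produce reciprocity maps for the $q$-dimensional local field $L$ (whose intermediate residue fields need not be perfect, and where one works with the completion $\hat{K}_q(L)$) and for the base $K$, verify their compatibility with the $K$-theoretic residue map $\Res_{L/K}$, and --- most substantially --- prove that Kato's Swan conductor (defined cohomologically, and computed via Witt vectors in the equal-characteristic setting of Section \ref{sec:2}) is genuinely given by the filtration $U^{\bullet}\hat{K}_q(L)$ under $\rho_L$, so that the ``smallest $m$'' characterization used above is legitimate. Making this precise for general $q$, together with the continuity and surjectivity checks that the diagram chases tacitly require, is the technical heart; once it is in place, Theorem \ref{key2}, Lemma \ref{lemma:exp} and Proposition \ref{cor:2} carry out the rest formally, exactly as in Theorem \ref{the:maintwo}.
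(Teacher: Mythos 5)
Your first two paragraphs (the residue/exponential computation via Theorem \ref{key2}, Lemma \ref{lemma:exp} and Proposition \ref{cor:2}, with $n'=\lceil\delta_\tor(L/K)/e\rceil$, $n=\sw\chi-n'$) reproduce exactly the computational half of the paper's argument. The gap is in the third paragraph, which you defer as a "technical check": the class field theory input you postulate is precisely what is \emph{not} available here, and the paper's actual proof is organized around avoiding it. In this proposition $k$ is only assumed perfect, so the last residue field $L_0$ of the $q$-dimensional field $L$ need not be finite (it may even be algebraically closed); there is then no reciprocity map $\rho_L\colon \hat{K}_q(L)\to G_L^{\mathrm{ab}}$ (nor $\rho_K\colon K^\times\to G_K^{\mathrm{ab}}$) with the pointwise compatibility $\chi_L\circ\rho_L=\chi\circ\rho_K\circ\Res_{L/K}$ and the property that Kato's Swan conductor is read off from the unit filtration through it --- this is exactly why the introduction speaks of a \emph{modified} class field theory, and why Theorem \ref{the:maintwo} (finite last residue field) does not carry over verbatim. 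What the paper uses instead is Kato's pairing $H^1(L)\times\hat{K}_q(L)\to H^{q+1}(L)\{p\}$ together with \cite[Proposition 6.5]{kato1989swan} (after first reducing, as in \cite[(7.6)]{kato1989swan}, to the case $H^1_p(k)\neq 0$, and checking $H^1_p(L_0)\neq 0$ so that the criterion applies).

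With that substitute, however, your "iff" chain breaks in one direction. The compatibility between the $L$-level and $K$-level pairings only holds after composing with the corestriction $H^1_p(L_0)\to H^1_p(k)$, which is surjective but not injective; writing $\{\,,\,\}_k$ for the composite pairing, the computation gives $\{\chi_L,U^{e\sw\chi-\delta_\tor(L/K)}\hat{K}_q(L)\}_k\neq 0$ and $\{\chi_L,U^{e\sw\chi-\delta_\tor(L/K)+1}\hat{K}_q(L)\}_k=0$. The first statement does yield the lower bound $\sw\chi_L\geq e\sw\chi-\delta_\tor(L/K)$, but the second does \emph{not} yield the upper bound, since $\{\chi_L,x\}_L$ could be nonzero while dying in $H^1_p(k)$. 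The paper closes this gap with a separate argument that your proposal has no counterpart for: assuming $s=\sw\chi_L>e\sw\chi-\delta_\tor(L/K)$, it uses Kato's refined Swan conductor of $\chi_L$ to produce explicit symbols showing $\{\chi_L,U^{s}\hat{K}_q(L)\}_l\supset H^q_p(l)$, whence (via the isomorphisms $H^q_p(l)\simeq H^1_p(L_0)$ and the surjection onto $H^1_p(k)\neq 0$) $\{\chi_L,U^{s}\hat{K}_q(L)\}_k\neq 0$, contradicting the vanishing above. Without either a genuine reciprocity map valid for perfect (non-finite) last residue fields --- which you would have to construct --- or this refined-Swan-conductor step, the inequality $\sw\chi_L\leq e\sw\chi-\delta_\tor(L/K)$ is unproved, so the proposal as it stands does not establish the proposition.
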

\begin{proof}
	Using the same argument as in \cite[(7.6)]{kato1989swan}, we can assume  $H_p^1(k)\neq 0$. Let $L = L_q, l=L_{q-1}, \ldots, L_1, L_0$ be the chain of residue fields of the $q$-dimensional local field $L$. Since there are isomorphisms (\cite[Theorem 3]{kato1982galois})
	\[
	H^{q+1}(L)\{p\}\simeq H^{q}(L_{q-1})\{p\}\simeq H^{q-1}(L_{q-2})\{p\}\simeq\cdots \simeq H^1(L_0)\{p\}
	\] 
	and 
	\[
	H^2(K)\{p\}\simeq H^1(k)\{p\},
	\]
	we have a commutative diagram 
	\begin{center}
		\begin{tikzcd}[column sep=small]
			H^1(L) &\times & \hat{K}_q(L) \arrow[dd, "\Res_{L/K}"] \ar{rrr}{\{ \; , \; \}_{L}} & & & H^{q+1}(L)\{p\} \arrow[rrr, "\simeq"] & & & H^1(L_0) \{p\} \arrow[dd] \\
			\\ H^1(K) \arrow[uu] & \times &
			K^\times \ar{rrr}{\{ \; , \; \}_K} & & & H^{2}(K)\{p\} \arrow[rrr, "\simeq"] & & & H^1(k) \{p\}
		\end{tikzcd}
	\end{center}
	Here, the pairing $H^1(L)\times \hat{K}_q(L)\to H^{q+1}(L)\{p \}$ is the one constructed  in \cite{kato1989swan}. 
	Denote the composition  $H^1(L)\times \hat{K}_q(L)\to H^{q+1}(L)\{p \}\to H^{1}(k)\{p \}$ by $\{ \; , \; \}_k$. Similarly, denote the composition  $H^1(L)\times \hat{K}_q(L)\to H^{q+1}(L)\{p \}\to H^{q}(l)\{p \}$ by $\{ \; , \; \}_l$. Since the last arrow is an isomorphism,  $\{A, B\}_L=0$ if and only if $\{A, B\}_l=0$, where $A\in H^1(L)$ and $B\in \hat{K}_q(L)$. 
	
	Observe that $H^1_p(L_0)\neq 0$. Indeed,  $H^1_p(L_0)\simeq L_0/(x^p-x, x\in L_0)$ and
	$H^1_p(k)\simeq k/(x^p-x, x\in k)$, so $H^1_p(L_0) \twoheadrightarrow H^1_p(k)$ follows from the compatibility between the corestriction map and the trace map. Since $H^1_p(k)\neq 0$, we also have $H^1_p(L_0)\neq 0$. 
	
	 From \cite[Proposition 6.5]{kato1989swan}, we have that 
	\[
	\sw \chi_L =m\geq 1
	\]				
	if and only if 
	\[
	\{\chi_L, U^{m+1}\hat{K}_q(L)\}_L=0
	\]
	but
	\[
	\{\chi_L, U^{m}\hat{K}_q(L)\}_L\neq 0.
	\]

		To simplify notation, put $e=e(L/K)$, $n'=\left\lceil\frac{\delta_\tor(L/K)}{e}\right\rceil$ and $n=\sw \chi - n'$.  Pick $\eta \in \OO_K$ such that $v_K(\eta)=n$. From Lemma \ref{lemma:exp}, the commutative   diagram
	\begin{center}
		\begin{tikzcd}
			\m_L^{en'-\delta_\tor(L/K)}	\hat{\Omega}^{q-1}_{\OO_{L}}(\log) \arrow[r, "\exp_\eta"] \arrow[d, "\Res_{L/K}"] 
			& \hat{K}_q(L) \arrow[d, "\Res_{L/K}"]    \\
			\m_K^{n'} \arrow[r, "\exp_\eta"]
			& K^\times 
		\end{tikzcd}
	\end{center}
	given  by Theorem \ref{key2}, and the surjectivity of the left vertical arrow, we have that 
	\begin{align*}
	\{\chi_L, U^{e\sw\chi-\delta_\tor(L/K)+1}\hat{K}_q(L)\}_k&=\\ \{\chi_L, U^{en'-\delta_\tor(L/K)+en+1}\hat{K}_q(L)\}_k&=\{\chi, U^{\sw \chi+1}_K\}_k=0
	\end{align*}
	but
	\begin{align*}
	\{\chi_L, U^{e\sw \chi-\delta_\tor(L/K)}\hat{K}_q(L)\}_k&=\\
	\{\chi_L, U^{en'-\delta_\tor(L/K)+en}\hat{K}_q(L)\}_k&=	\{\chi, U^{\sw \chi}_K\}_k\neq 0.
	\end{align*}
	
	This clearly yields $\{\chi_L, U^{e\sw\chi-\delta_\tor(L/K)}\hat{K}_q(L)\}_L\neq 0$, so $\sw \chi_L \geq e \sw \chi - \delta_\tor(L/K)$. It remains to show that $\sw \chi_L \leq e \sw \chi - \delta_\tor(L/K)$. 

	Assume that $s=\sw \chi_L > e \sw \chi -\delta_\tor(L/K)$. 
	The key point is to show that
	 \[\{\chi_L, U^{s}\hat{K}_q(L)\}_l\supset H_p^{q}(l).\]
	 Indeed, if $\{\chi_L, U^{s}\hat{K}_q(L)\}_l\supset H_p^{q}(l)$, then,  from the isomorphisms
	 \[
	  H_p^{q}(l) \simeq \cdots \simeq H_p^1(L_0)
	 \]
	obtained in \cite{kato1982galois} and the surjectivity of $H_p^1(L_0)\twoheadrightarrow H_p^1(k)$, we get that 
	\[\{\chi_L, U^{s}\hat{K}_q(L)\}_k\neq0.\]
	This is a contradiction because
	\[
		\{\chi_L, U^{s}\hat{K}_q(L)\}_k \subset \{\chi_L, U^{ e \sw \chi -\delta_\tor(L/K) +1}\hat{K}_q(L)\}_k = \{\chi, U_K^{\sw \chi +1}\}_k = 0.
	\]
	
	We will now show that $\{\chi_L, U^{s}\hat{K}_q(L)\}_l\supset H_p^{q}(l)$. Since $l$ is of characteristic $p>0$, there is an isomorphism 
	\[
		H_p^q(l) \simeq \Coker\left(F-1:\Omega_l^{q-1}\longrightarrow
		 \Omega_l^{q-1}/d\Omega_l^{q-2}\right).
	\]
	Denote by $\delta_1(\omega)$ the class of $\omega \in \Omega_l^{q-1}$ in $H_p^q(l)$. Let  $[\pi_L^s]^{-1}\left(\alpha + \beta \frac{d[\pi_L]}{[\pi_L]}\right)$ be Kato's refined Swan conductor (\cite[Definition 5.3]{kato1989swan}) of $\chi_L$, where $\alpha \in \Omega^1_l$ and $\beta \in l$, and $(\alpha,\beta)\neq (0,0)$.
	
	If $\beta \neq 0$,  take $u_1, \ldots, u_{q-1} \in l^\times$ and $a\in l$ such that 
	\[\delta_1\left(a\beta \frac{du_1}{u_1} \wedge \cdots \wedge \frac{d u_{q-1}}{u_{q-1}}\right)\neq 0.\] 
	Let $\tilde{a}\in \OO_L$, $\tilde{u_i}$ be lifts of $a$, $u_i$ to $\OO_L$.  Then
	\[\{\chi_L, 1+\tilde{a} \pi_L^s, \tilde{u}_1, \ldots, \tilde{u}_{q-1}\}_l=\delta_1\left(a\beta \frac{du_1}{u_1} \wedge \cdots \wedge \frac{d u_{q-1}}{u_{q-1}}\right)\neq 0.\]
	Since  $\Omega_l^{q-1}$ is a one-dimensional vector space over $l$,  we know that the element ${\beta \frac{du_1}{u_1} \wedge \cdots \wedge \frac{d u_{q-1}}{u_{q-1}}}$ is a generator for    $\Omega_l^{q-1}$ over $l$. Then
	\begin{align*}
		H_p^{q}(l)&=\left\{ \delta_1\left(b\beta \frac{du_1}{u_1} \wedge \cdots \wedge \frac{d u_{q-1}}{u_{q-1}} \right): b\in l \right\}\\&= \left\{\{\chi_L, 1+\tilde{b} \pi_L^s, \tilde{u}_1, \ldots, \tilde{u}_{q-1}\}_l:\tilde{b}\in \OO_L \right\}\subset\{\chi_L, U^{s}\hat{K}_q(L)\}_l .
	\end{align*} 
	Similarly, if $\beta = 0$ and $\alpha \neq 0$, take  $u_1, \ldots, u_{q-2} \in l^\times$ and $a\in l$ such that 
	\[\delta_1\left(a\alpha \wedge\frac{du_1}{u_1} \wedge \cdots \wedge \frac{d u_{q-2}}{u_{q-2}}\right)\neq 0.\] 
	We have that 
	\[\{\chi_L, 1+\tilde{a} \pi_L^s, \tilde{u}_1, \ldots, \tilde{u}_{q-2}, \pi_L\}_l=\delta_1\left(a\alpha \wedge \frac{du_1}{u_1} \wedge \cdots \wedge \frac{d u_{q-1}}{u_{q-2}}\right)\neq 0,\]
	and $\alpha \wedge \frac{du_1}{u_1} \wedge \cdots \wedge \frac{d u_{q-1}}{u_{q-2}}$ is a generator for $\Omega_l^{q-1}$ over $l$. Then, using the same reasoning as before, we get
	$H_p^{q}(l)\subset \{\chi_L, U^{s}\hat{K}_q(L)\}_l$.
\end{proof}

\begin{theorem} \label{main mixed}
	Let $L/K$ be an 
	extension of complete discrete valuation fields of mixed characteristic. Assume that $K$ has perfect residue field of characteristic $p>0$. 
	
	Denote by $e(L/K)$ the ramification index of $L/K$.
	Assume that  $\chi\in H^1(K)$ is such that \[\sw \chi \geq
	\dfrac{2e_K}{p-1}+\dfrac{1}{e(L/K)}+\left\lceil\dfrac{\delta_\tor(L/K)}{e(L/K)}\right\rceil.\] Denote by $\chi_L$ its image in $H^1(L)$. Then 
	\[\sw \chi_L = e(L/K) \sw \chi - \delta_\tor(L/K).\]
\end{theorem}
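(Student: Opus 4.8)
The plan is to deduce Theorem~\ref{main mixed} from Proposition~\ref{most} by a base-change-and-enlargement argument, generalizing the reduction used to prove the corollary at the end of Section~\ref{sec:3}. I would produce from $L/K$ an extension $L'/K'$ for which $K'$ is again a local field, $L'$ is a $q$-dimensional local field that is a finite extension of $K'\{\{T_1\}\}\cdots\{\{T_{q-1}\}\}$, and all of the following are unchanged: $e(L'/K')=e(L/K)$, $\delta_\tor(L'/K')=\delta_\tor(L/K)$, $\sw\chi_{K'}=\sw\chi$ (where $\chi_{K'}$ is the image of $\chi$ in $H^1(K')$), and $\sw\chi_{L'}=\sw\chi_L$. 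Granting this, the hypothesis on $\sw\chi$ becomes literally the same inequality for $L'/K'$, so Proposition~\ref{most} applies and yields $\sw\chi_{L'}=e(L'/K')\sw\chi_{K'}-\delta_\tor(L'/K')$, which is exactly the asserted formula.

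\textbf{Construction of $L'/K'$.} First, reduce (if necessary) to the case in which the residue field $l$ of $L$ has a finite $p$-basis $\{b_1,\dots,b_{q-1}\}$: since $\delta_\tor(L/K)$ and $\sw\chi_L$ are finite, only finitely many of the generators $db_\lambda$ of $\hat{\Omega}^1_{\OO_L}(\log)$ are relevant, and one isolates them, keeping the prime and the quantities $e$, $\delta_\tor$, $\sw\chi_L$ fixed, with the help of \cite[Lemma 6.2]{kato1989swan}. Next, replace $K$ by an unramified extension $K'$ whose residue field contains the last residue field $k'$ of $l$; an unramified extension with perfect (hence separable) residue extension changes neither $e_K$ nor $\sw\chi$, and since $\OO_{K'}\otimes_{\OO_K}\hat{\Omega}^1_{\OO_K}(\log)\simeq \hat{\Omega}^1_{\OO_{K'}}(\log)$ it does not change $\delta_\tor(\,\cdot\,/K)$ either, while $\OO_{K'}$ embeds in $\OO_L$. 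Lifting the $p$-basis to units of $\OO_L$, the subfield $K'\{\{b_1\}\}\cdots\{\{b_{q-1}\}\}$ of $L$ makes sense (the relevant series converge, each $b_i$ being a unit whose residue is $p$-independent over the perfect field $k'$), and I would enlarge $L$ to a complete discrete valuation field $L'$ with $e(L'/L)=1$, the same prime $\pi_L$, and residue field a finite separable extension of $k'((b_1))\cdots((b_{q-1}))$ containing $l$ with the same $p$-basis $\{b_1,\dots,b_{q-1}\}$ — exactly as $L(E)$ is built in the proof of the corollary of Section~\ref{sec:3}. Then $L'$ is a $q$-dimensional local field and a finite extension of $M:=K'\{\{b_1\}\}\cdots\{\{b_{q-1}\}\}$ (the residue extension of $L'/M$ is finite and $e(L'/M)=e(L'/K')<\infty$); one has $e(L'/K')=e(L/K)$ because $e(L'/L)=1$; one has $\delta_\tor(L'/K')=\delta_\tor(L/K)$ because $L'/L$ keeps the prime and $p$-basis, so $\OO_{L'}\otimes_{\OO_L}\hat{\Omega}^1_{\OO_L}(\log)\simeq\hat{\Omega}^1_{\OO_{L'}}(\log)$ (and correspondingly on torsion parts and on the relative quotients defining $\delta_\tor$); and $\sw\chi_{L'}=\sw\chi_L$ by \cite[Lemma 6.2]{kato1989swan}. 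Applying Proposition~\ref{most} to $L'/K'$ then completes the proof.

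\textbf{Main obstacle.} The hard part is the field-theoretic input of the third step: embedding the residue field $l$ — which is in general neither complete nor finitely generated over $k$ — into a standard higher-dimensional local field, up to a finite separable extension and while preserving a prescribed $p$-basis, together with the companion reduction to a finite $p$-basis. Once this is available, the existence of the enlargement $L'/L$ with $e=1$ and the bookkeeping with the ramification indices, with $\delta_\tor$, and with the Swan conductor are routine, relying only on \cite[Lemma 6.2]{kato1989swan}, the behaviour of the modules $\hat{\Omega}^1_{\OO_L}(\log)$ under an extension that preserves the prime and the $p$-basis, and Proposition~\ref{most}.
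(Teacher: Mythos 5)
Your overall skeleton is the same as the paper's: enlarge $L$ to a complete discrete valuation field $L'$ with $e(L'/L)=1$, the same prime, and residue field a finite separable extension of a standard field $k'((T_1))\cdots((T_{q-1}))$ with the same $p$-basis, check that $e$, $\delta_\tor$ and $\sw$ are unchanged (via \cite[Lemma 6.2]{kato1989swan} and the isomorphism $\OO_{L'}\otimes_{\OO_L}\hat{\Omega}^1_{\OO_L}(\log)\simeq\hat{\Omega}^1_{\OO_{L'}}(\log)$), and invoke Proposition \ref{most}. But the step that makes this work is exactly the one you leave open. The paper first reduces, following the argument of \cite[\S 10]{kato1989swan}, to the case where $l$ is \emph{finitely generated over $k$}; then $l$ is finite separable over $k(T_1,\ldots,T_{q-1})$, so it visibly embeds into a finite separable extension $E$ of $k((T_1))\cdots((T_{q-1}))$ with the same $p$-basis, and the construction of $L(E)$ goes through. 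You instead reduce only to ``$l$ has a finite $p$-basis'' (itself justified just by the vague remark that finitely many $db_\lambda$ are ``relevant''), and you then need to embed an arbitrary such $l$, after choosing a perfect subfield $k'$ (your ``last residue field of $l$'', which is not a well-defined notion), into a finite separable extension of $k'((b_1))\cdots((b_{q-1}))$ preserving the prescribed $p$-basis. You explicitly flag this embedding, ``together with the companion reduction to a finite $p$-basis'', as the main obstacle and do not prove it; since this is precisely the content of the reduction to Proposition \ref{most}, the proposal does not yet constitute a proof. The fix is the paper's: do the finite-generation reduction first, after which the embedding is immediate.

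A secondary, repairable error: you assert that $K'\{\{b_1\}\}\cdots\{\{b_{q-1}\}\}$ ``makes sense as a subfield of $L$'' because ``the relevant series converge''. They do not: for a unit $b_1\in\OO_L^\times$ the positive-power tail $\sum_{i>0}a_ib_1^i$ of an element of $K'\{\{T_1\}\}$ has terms that need not tend to $0$, so evaluation at $b_1$ diverges in general (and $l$ typically does not contain $k'((b_1))$). Fortunately this claim is not needed: as in the paper, the standard field $M$ only has to embed into the enlarged field $L'$, whose residue field does contain $k'((b_1))\cdots((b_{q-1}))$, and it is $L'/M$ that must be finite.
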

\begin{proof}
	Following the same argument as \cite[\S 10]{kato1989swan}, we can assume that  the residue field $l$ of $L$ is finitely generated over the residue field $k$ of $K$.
	Since we have proven Proposition \ref{most}, it is enough to show that this  case can be reduced to that  of a $q$-dimensional local field that is a finite extension of  $K\{\{T_1\}\}\cdots\{\{T_{q-1}\}\}$.
	
	Since $l$ is finitely generated over $k$, there are $T_1, \ldots, T_{q-1} \in l$ such that $l$ is a finite, separable extension of $k(T_1,\ldots, T_{q-1})$. Since there is an embedding  $k(T_1,\ldots, T_{q-1})\hookrightarrow k((T_1))\cdots ((T_{q-1}))$,
	there is also an embedding $l\hookrightarrow E$ of $l$ into a finite, separable extension $E$ of $k((T_1))\cdots ((T_{q-1}))$. Since $\{T_1, \ldots, T_{q-1}\}$ is a $p$-basis for both $l$ and $E$, there is a complete, discrete valuation field $L(E)$ that is an extension of $L$ satisfying $\OO_L\subset \OO_{L(E)}$, $\m_L\subset \m_{L(E)}$,  $\pi_L$ is still prime in $L(E)$, and the residue field of $L(E)$ is isomorphic to $E$ over $l$.
	
	$L(E)$ is a finite extension of $K\{\{T_1\}\}\cdots\{\{T_{q-1}\}\}$. Since  $e(L(E)/L)=1$, we get $e(L(E)/K)=e(L/K)$. Further, since $E$ and $l$ have the same $p$-basis and $\pi_L$ is a prime for both $L$ and $L(E)$, the map $\OO_{L(E)}\otimes_{\OO_L} \hat{\Omega}^1_{\OO_{L}}(\log)\to \hat{\Omega}^1_{\OO_{L(E)}}(\log)$ sends generators to generators satisfying the same relations, so it is an isomorphism. In particular,  $\OO_{L(E)}\otimes_{\OO_L} \hat{\Omega}^1_{\OO_{L}}(\log)_\tor\simeq \hat{\Omega}^1_{\OO_{L(E)}}(\log)_\tor$. Therefore, by definition, $\delta_\tor(L(E)/K)=\delta_\tor(L/K)$. From \cite[Lemma 6.2]{kato1989swan}, since  $\OO_L\subset \OO_{L(E)}$, $\m_{L(E)}=\OO_{L(E)} \m_{L}$, and the extension of residue fields is separable, we have  $\sw \chi_{L(E)}=\sw \chi_{L}$. Thus it is sufficient to prove that
	\[\sw \chi_{L(E)} = e(L/K) \sw \chi - \delta_\tor(L(E)/K),\]
	which follows from Proposition \ref{most}. 
\end{proof}

\section{A generalized $\psi$-function} \label{sec5}

Through  this section, let $L/K$ be an 
extension of  complete discrete valuation fields such that the
residue field of $K$ is perfect and 
of characteristic $p>0$. We define   
generalizations of the classical $\psi$-function for this case. More precisely, we will define functions $\psi_{L/K}^{\mathrm{AS}}:\R_{\geq 0}\to \R_{\geq 0}$ and $\psi_{L/K}^{\mathrm{ab}}:\R_{\geq 0}\to\R_{\geq 0}$ and show that, in the classical case of $L/K$ finite, they both coincide with the classical $\psi_{L/K}:\R_{\geq 0}\to\R_{\geq 0}$ (see Theorem \ref{thm:psi}). The superscripts AS and ab refer, respectively, to  Abbes-Saito and abelian. In the definition of  $\psi_{L/K}^{\mathrm{AS}}$ we use the Abbes-Saito upper ramification filtrations of absolute Galois groups, while  in the definition of  $\psi_{L/K}^{\mathrm{ab}}$ we use Kato's ramification filtration of $H^1(L)$. 

We also define functions $\varphi_{L/K}^{\mathrm{AS}}:\R_{\geq 0}\to \R_{\geq 0}$  and $\varphi_{L/K}^{\mathrm{ab}}:\R_{\geq 0}\to \R_{\geq 0}$ and show that, when
$\varphi_{L/K}^{\mathrm{AS}}$ and $\varphi_{L/K}^{\mathrm{ab}}$  are injective, $\psi_{L/K}^{\mathrm{AS}}$ and $\psi_{L/K}^{\mathrm{ab}}$ are their respective left inverses (and vice-versa).
 
Assume first that the residue field $k$ of $K$ is algebraically closed. 
For $t \in \Z_{(p)}$, $t\geq0$, define $\psi^{\mathrm{ab}}_{L/K}(t)\in \R_{\geq 0}$ as 
\begin{align*}
	&\psi^{\mathrm{ab}}_{L/K}(t)= \\ &\inf\left\lbrace s\in \Z_{(p)} \, \middle| \, \begin{array}{l@{}l@{}}
		\operatorname{Im}(F_{e(K'/K)t}H^1(K') \to H^1(LK')) \subset F_{e(LK'/L)s}H^1(LK')   \\  \text{for all finite, tame extensions $K'/K$ of complete discrete} \\ \text{valuation fields such that }  e(LK'/L)s, e(K'/K)t\in \Z  \end{array}  \right\rbrace,
\end{align*}
and then extend $\psi^{\mathrm{ab}}_{L/K}$ to $\R_{\geq 0}$ by putting 
\[
\psi^{\mathrm{ab}}_{L/K}(t)=\sup\{\psi^{\mathrm{ab}}_{L/K}(s):s\leq t, s \in \Z_{(p)} \}.
\]

Similarly, for $t \in \Z_{(p)}$, $t\geq0$, define $\varphi^{\mathrm{ab}}_{L/K}(t)\in \R_{\geq 0}$ as 
\begin{align*}
	&\varphi^{\mathrm{ab}}_{L/K}(t)= \\& \sup\left\lbrace s\in \Z_{(p)} \, \middle| \, \begin{array}{l@{}l@{}}
		\operatorname{Im}(F_{e(K'/K)s}H^1(K') \to H^1(LK')) \subset F_{e(LK'/L)t}H^1(LK') \\ \text{for all finite, tame extensions $K'/K$ of complete discrete} \\ \text{valuation fields such that }  e(LK'/L)t, e(K'/K)s\in \Z   \end{array}  \right\rbrace,
\end{align*}
and then extend $\varphi^{\mathrm{ab}}_{L/K}$ to $\R_{\geq 0}$  by putting 
\[
\varphi^{\mathrm{ab}}_{L/K}(t)=\sup\{\varphi^{\mathrm{ab}}_{L/K}(s):s\leq t, s \in \Z_{(p)} \}.
\]

Let  $G_{K, \, \log}^{t+}$ denote the Abbes-Saito logarithmic upper ramification filtration defined in \cite{abbessaito}. We now define  $\psi_{L/K}^{\mathrm{AS}}$ and  $\varphi_{L/K}^{\mathrm{AS}}$ by putting,
for $t \in \R_{\geq 0}$,  
\[\psi^{\mathrm{AS}}_{L/K}(t)=  \inf\left\lbrace s\in \R :  
\operatorname{Im}(G_{L, \, \log}^{s+}\to G_K)\subset G_{K, \, \log}^{t+}     \right\rbrace
\]
and  
\[\varphi^{\mathrm{AS}}_{L/K}(t)=  \sup\left\lbrace s\in \R :
\operatorname{Im}(G_{L, \, \log}^{t+}\to G_K)\subset G_{K, \, \log}^{s+}     \right\rbrace.
\]

When $k$ is not necessarily algebraically closed, we define  $\psi^{\mathrm{ab}}_{L/K}$, 
$\varphi^{\mathrm{ab}}_{L/K}$, $\psi^{\mathrm{AS}}_{L/K}$ and $\varphi^{\mathrm{ab}}_{L/K}$ as follows. Let $\tilde{K}= \widehat{K_{\mathrm{ur}}}$ and $\tilde{L}=\widehat{L K_{ur}}$. 
Then define  $\psi^{\mathrm{ab}}_{L/K}= \psi^{\mathrm{ab}}_{\tilde{L}/\tilde{K}}$, $\varphi^{\mathrm{ab}}_{L/K}= \varphi^{\mathrm{ab}}_{\tilde{L}/\tilde{K}}$, $\psi^{\mathrm{AS}}_{L/K}= \psi^{\mathrm{AS}}_{\tilde{L}/\tilde{K}}$  and 
$\varphi^{\mathrm{AS}}_{L/K}= \varphi^{\mathrm{AS}}_{\tilde{L}/\tilde{K}}$.

The above defined functions have properties similar to those of their classical counterparts. We will now prove some of these properties. 

\begin{proposition}
	If $\varphi^{\mathrm{ab}}_{L/K}(t)$ is injective, then  $\psi^{\mathrm{ab}}_{L/K}(t)$  is its left inverse. Similarly, if $\psi^{\mathrm{ab}}_{L/K}(t)$ is injective, then  $\varphi^{\mathrm{ab}}_{L/K}(t)$ is its left inverse. 
\end{proposition}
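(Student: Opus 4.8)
The strategy is to isolate two formal properties of $\psi := \psi^{\mathrm{ab}}_{L/K}$ and $\varphi := \varphi^{\mathrm{ab}}_{L/K}$ and then deduce the proposition by a purely order-theoretic manipulation. Since by definition $\psi^{\mathrm{ab}}_{L/K} = \psi^{\mathrm{ab}}_{\tilde L/\tilde K}$ and $\varphi^{\mathrm{ab}}_{L/K} = \varphi^{\mathrm{ab}}_{\tilde L/\tilde K}$, and $\tilde K$ has algebraically closed residue field (as $k$ is perfect), I would first reduce to the case that $k$ is algebraically closed, so that the explicit definitions above apply directly.

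Next I would record that $\psi$ and $\varphi$ are the two Herbrand-type functions attached to a single monotone relation. Call a pair $(a,b)$ of non-negative elements of $\Z_{(p)}$ \emph{admissible} when $\operatorname{Im}(F_{e(K'/K)a}H^1(K')\to H^1(LK'))\subseteq F_{e(LK'/L)b}H^1(LK')$ for every finite tame extension $K'/K$ of complete discrete valuation fields with $e(K'/K)a,\,e(LK'/L)b\in\Z$; then $\psi(t)=\inf\{s:(t,s)\text{ admissible}\}$ and $\varphi(t)=\sup\{s:(s,t)\text{ admissible}\}$ on $\Z_{(p)}$, extended to $\R_{\ge 0}$ by the stated sup-interpolation. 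Shrinking the $K$-side level $a$ shrinks the image and enlarging the $L$-side level $b$ enlarges the target, so the set of admissible pairs is downward closed in the first coordinate and upward closed in the second. From this I would extract: (1) $\psi$ and $\varphi$ are non-decreasing; and (2) $\psi(\varphi(t))\le t$ and $\varphi(\psi(t))\ge t$ for all $t\in\R_{\ge 0}$ — the point being simply that $(\varphi(t),t)$ and $(t,\psi(t))$ are admissible, whence $\psi(\varphi(t))=\inf\{s:(\varphi(t),s)\text{ admissible}\}\le t$ and $\varphi(\psi(t))=\sup\{s:(s,\psi(t))\text{ admissible}\}\ge t$.

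Granting (1) and (2), the proposition is immediate. If $\varphi$ is injective then, being also non-decreasing, it is strictly increasing; applying the second inequality of (2) at the point $\varphi(t)$ gives $\varphi(\psi(\varphi(t)))\ge\varphi(t)$, while the first inequality of (2) gives $\psi(\varphi(t))\le t$ and hence, by monotonicity of $\varphi$, $\varphi(\psi(\varphi(t)))\le\varphi(t)$; therefore $\varphi(\psi(\varphi(t)))=\varphi(t)$, and injectivity forces $\psi(\varphi(t))=t$, i.e. $\psi\circ\varphi=\mathrm{id}$. Exchanging the roles of $\psi$ and $\varphi$ (and using the other inequality of (2)) shows likewise that $\varphi\circ\psi=\mathrm{id}$ when $\psi$ is injective.

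Two places require care, and this is where I expect the real work to lie. First, admissibility of $(a,b)$ quantifies only over tame $K'$ with $e(K'/K)a$ and $e(LK'/L)b$ integral, and this family varies with $a$ and $b$; so to know that admissibility is genuinely monotone in each variable — and, crucially, that the infimum and supremum defining $\psi$ and $\varphi$ on $\Z_{(p)}$ are attained in $\Z_{(p)}$, which is what legitimises writing ``$(\varphi(t),t)$ admissible'' — one must invoke the compatibility of Kato's filtration $F_\bullet H^1$ with tame base change (the Swan conductor multiplies by the ramification index), which lets one enlarge any given $K'$ to one satisfying the required divisibilities without affecting the relevant inclusions. Second, once (1) and (2) hold on $\Z_{(p)}$ one must transport them to all of $\R_{\ge 0}$ through the monotone interpolation, using density of $\Z_{(p)}$ in $\R$ together with monotonicity; this is the fiddliest part — in particular one must check (or argue around the possibility) that an interpolated value $\varphi(t)$ is again of the form $\varphi(s)$ for some $s\in\Z_{(p)}$. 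I expect this bookkeeping-heavy transition from $\Z_{(p)}$ to $\R$ to be the main obstacle; everything else is formal.
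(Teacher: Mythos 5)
Your proposal is correct and follows essentially the same route as the paper: reduce to the case of algebraically closed residue field, establish monotonicity of $\varphi^{\mathrm{ab}}_{L/K}$ and $\psi^{\mathrm{ab}}_{L/K}$ together with the two adjunction inequalities $\psi(\varphi(t))\le t$ and $\varphi(\psi(t))\ge t$, and conclude by injectivity. The only real difference is that the paper sidesteps the attainment issue you flag (whether $(\varphi(t),t)$ and $(t,\psi(t))$ are themselves admissible) by arguing by contradiction with a strictly intermediate $\tilde t\in\Z_{(p)}$ satisfying $\psi^{\mathrm{ab}}_{L/K}(\varphi^{\mathrm{ab}}_{L/K}(t))<\tilde t<t$, rather than evaluating at the infimum itself.
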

\begin{proof}
	From the definitions of  $\varphi^{\mathrm{ab}}_{L/K}(t)$ and $\psi^{\mathrm{ab}}_{L/K}(t)$, we can assume that $k$ is algebraically closed.
	We shall prove that if $\varphi^{\mathrm{ab}}_{L/K}(t)$ is injective, then  $\psi^{\mathrm{ab}}_{L/K}(t)$  is its left inverse. 	The other statement is proved in an analogous way. 
	
	It is enough to show that, for $t\in \Z_{(p)}$, $t\geq 0$, we have $t=\psi^{\mathrm{ab}}_{L/K}(\varphi^{\mathrm{ab}}_{L/K}(t))$.
	If $s\in \Z_{(p)}$ is smaller or equal to  $\varphi^{\mathrm{ab}}_{L/K}(t)$, then 
	\[
	\operatorname{Im}(F_{e(K'/K)s}H^1(K') \to H^1(LK')) \subset F_{e(LK'/L)t}H^1(LK')
	\]
	for all finite Galois  extensions $K'/K$ of complete discrete valuation fields such that  $K'/K$ is tame and  $e(LK'/L)t, e(K'/K)s\in \Z$. Then
	$t\geq\psi^{\mathrm{ab}}_{L/K}(s)$, so  $t\geq\psi^{\mathrm{ab}}_{L/K}(\varphi^{\mathrm{ab}}_{L/K}(t))$.
	
	Assume that we have $t>\psi^{\mathrm{ab}}_{L/K}(\varphi^{\mathrm{ab}}_{L/K}(t))$. Take  $\tilde{t}\in \Z_{(p)}$ that satisfies 
	$\psi^{\mathrm{ab}}_{L/K}(\varphi^{\mathrm{ab}}_{L/K}(t))<\tilde{t}<t$. Let $K'/K$ be any finite Galois extension of complete discrete valuation fields that is tame and such that $e(LK'/L)\tilde{t}\in \Z$. 
	Since $\tilde{t}>\psi^{\mathrm{ab}}_{L/K}(\varphi_{L/K}^{\mathrm{ab}}(t))$, 
	\[
	\operatorname{Im}(F_{e(K'/K)s}H^1(K') \to H^1(LK')) \subset F_{e(LK'/L)\tilde{t}}H^1(LK')
	\]
	for every $s\leq\varphi_{L/K}^{\mathrm{ab}}(t)$ in $\Z_{(p)}$ such that $e(K'/K)s\in \Z$. Then
	\[
	\varphi_{L/K}^{\mathrm{ab}}(\tilde{t})\geq \varphi_{L/K}^{\mathrm{ab}}(t).
	\]
	Since $\varphi_{L/K}^{\mathrm{ab}}$ is clearly increasing and $t>\tilde{t}$, we get 
	\[
	\varphi_{L/K}^{\mathrm{ab}}(\tilde{t})= \varphi_{L/K}^{\mathrm{ab}}(t),
	\]
	which contradicts the injectivity assumption. Therefore 
	\[
	t=\psi^{\mathrm{ab}}_{L/K}(\varphi^{\mathrm{ab}}_{L/K}(t))
	\]
	for every $t\geq 0$ and we conclude that  $\psi^{\mathrm{ab}}_{L/K}(t)$  is the left inverse of 
	$\varphi^{\mathrm{ab}}_{L/K}(t)$.	
\end{proof}

The analogous result for $\psi_{L/K}^{\mathrm{AS}}$ and $\varphi_{L/K}^{\mathrm{AS}}$ is also true:
\begin{proposition}
	If $\varphi^{\mathrm{AS}}_{L/K}(t)$ is injective, then  $\psi^{\mathrm{AS}}_{L/K}(t)$  is its left inverse. Similarly, if $\psi^{\mathrm{AS}}_{L/K}(t)$ is injective, then  $\varphi^{\mathrm{AS}}_{L/K}(t)$ is its left inverse. 
\end{proposition}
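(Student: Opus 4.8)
The plan is to transport, essentially verbatim, the argument just given for the preceding proposition, replacing Kato's filtration $F_\bullet H^1$ by the Abbes--Saito logarithmic upper ramification filtrations $G_{L,\log}^{\bullet+}$ and $G_{K,\log}^{\bullet+}$. First I would reduce to the case in which the residue field $k$ of $K$ is algebraically closed, which is immediate from the definitions $\psi^{\mathrm{AS}}_{L/K}=\psi^{\mathrm{AS}}_{\tilde L/\tilde K}$ and $\varphi^{\mathrm{AS}}_{L/K}=\varphi^{\mathrm{AS}}_{\tilde L/\tilde K}$. Since the two assertions are symmetric, it suffices to show that injectivity of $\varphi^{\mathrm{AS}}_{L/K}$ forces $\psi^{\mathrm{AS}}_{L/K}\bigl(\varphi^{\mathrm{AS}}_{L/K}(t)\bigr)=t$ for every $t\in\R_{\geq 0}$; the companion statement then follows by exchanging the roles of $\psi^{\mathrm{AS}}_{L/K}$ and $\varphi^{\mathrm{AS}}_{L/K}$.

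The first thing I would record is that, because $\{G_{K,\log}^{s+}\}_s$ and $\{G_{L,\log}^{s+}\}_s$ are decreasing filtrations, both $\varphi^{\mathrm{AS}}_{L/K}$ and $\psi^{\mathrm{AS}}_{L/K}$ are non-decreasing (a smaller index gives a larger group, which only makes the containment defining each function easier to satisfy). For the inequality $\psi^{\mathrm{AS}}_{L/K}\bigl(\varphi^{\mathrm{AS}}_{L/K}(t)\bigr)\leq t$: writing $u=\varphi^{\mathrm{AS}}_{L/K}(t)$, the definition of $u$ as a supremum gives $\operatorname{Im}(G_{L,\log}^{t+}\to G_K)\subset G_{K,\log}^{s+}$ for all $s<u$, so $t$ lies in the set whose infimum defines $\psi^{\mathrm{AS}}_{L/K}(s)$, whence $\psi^{\mathrm{AS}}_{L/K}(s)\leq t$ for all $s<u$, and I would then pass to the limit $s\to u^-$. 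For the reverse inequality I would argue by contradiction, exactly as in the previous proof: if $\psi^{\mathrm{AS}}_{L/K}(u)<t$, pick $\tilde t$ with $\psi^{\mathrm{AS}}_{L/K}(u)<\tilde t<t$; by definition of $\psi^{\mathrm{AS}}_{L/K}(u)$ there is $s<\tilde t$ with $\operatorname{Im}(G_{L,\log}^{s+}\to G_K)\subset G_{K,\log}^{u+}$, and since $G_{L,\log}^{\tilde t+}\subset G_{L,\log}^{s+}$ this gives $\operatorname{Im}(G_{L,\log}^{\tilde t+}\to G_K)\subset G_{K,\log}^{u+}$, so $\varphi^{\mathrm{AS}}_{L/K}(\tilde t)\geq u=\varphi^{\mathrm{AS}}_{L/K}(t)$; as $\varphi^{\mathrm{AS}}_{L/K}$ is non-decreasing and $\tilde t<t$, this forces $\varphi^{\mathrm{AS}}_{L/K}(\tilde t)=\varphi^{\mathrm{AS}}_{L/K}(t)$, contradicting injectivity.

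The step I expect to be the main obstacle is the limit passage in the $\leq$ direction: one needs $\bigcap_{s<u}G_{K,\log}^{s+}$ to coincide with $G_{K,\log}^{u+}$, i.e. an appropriate left-continuity of the Abbes--Saito filtration in its real parameter, or equivalently that the supremum defining $\varphi^{\mathrm{AS}}_{L/K}$ (and the infimum defining $\psi^{\mathrm{AS}}_{L/K}$) is attained. This is the one place where the real-valued setting here is genuinely harder than the $\varphi^{\mathrm{ab}}/\psi^{\mathrm{ab}}$ case, which dodges any endpoint issue by exploiting the density of $\Z_{(p)}$ and working only with strict inequalities. I would settle it using that $\operatorname{Im}(G_{L,\log}^{t+}\to G_K)$ is a closed subgroup of the profinite group $G_K$ (being the continuous image of a closed subgroup) together with the behaviour of $G^{\bullet+}_{\log}$ from \cite{abbessaito}; if this turns out to be delicate, an alternative is to first establish that $\psi^{\mathrm{AS}}_{L/K}$ and $\varphi^{\mathrm{AS}}_{L/K}$ are left-continuous and then run the argument above, after which no endpoint subtlety remains. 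Everything else is the same formal manipulation of suprema and infima as in the preceding proposition, and in fact slightly cleaner, since there are no integrality constraints to track.
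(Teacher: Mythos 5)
Your proposal follows essentially the same route as the paper: the same reduction to algebraically closed residue field via $\tilde L/\tilde K$, the same supremum/infimum manipulation to get $\psi^{\mathrm{AS}}_{L/K}\bigl(\varphi^{\mathrm{AS}}_{L/K}(t)\bigr)\leq t$, and an identical contradiction argument (choose $\tilde t$ strictly between, deduce $\varphi^{\mathrm{AS}}_{L/K}(\tilde t)=\varphi^{\mathrm{AS}}_{L/K}(t)$, contradict injectivity) for the reverse inequality. The endpoint subtlety you single out is in fact passed over silently by the paper, whose proof simply asserts the containment $\operatorname{Im}(G_{L,\log}^{t+}\to G_K)\subset G_{K,\log}^{s+}$ for all $s\leq\varphi^{\mathrm{AS}}_{L/K}(t)$, including $s=\varphi^{\mathrm{AS}}_{L/K}(t)$ itself, so your treatment is if anything more careful than the paper's at that point.
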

\begin{proof}
	From the definitions of  $\varphi^{\mathrm{AS}}_{L/K}(t)$ and $\psi^{\mathrm{AS}}_{L/K}(t)$, we can assume that $k$ is algebraically closed.
	We shall prove that if $\varphi^{\mathrm{AS}}_{L/K}(t)$ is injective, then  $\psi^{\mathrm{AS}}_{L/K}(t)$  is its left inverse. 	The other statement is proved in an analogous way.
	
	If $s\in \R$ is less than or equal to  $\varphi^{\mathrm{AS}}_{L/K}(t)$, then 
	\[ 
	\operatorname{Im}(G_{L, \, \log}^{t+}\to G_K)\subset G_{K, \, \log}^{s+}.     
	\]
	Hence
	$t\geq\psi^{\mathrm{AS}}_{L/K}(s)\geq\psi^{\mathrm{AS}}_{L/K}(\varphi^{\mathrm{AS}}_{L/K}(t))$.
	
	Assume that we have $t>\psi^{\mathrm{AS}}_{L/K}(\varphi^{\mathrm{AS}}_{L/K}(t))$. Take  $\tilde{t}\in \R$ such that 
	\[\psi^{\mathrm{AS}}_{L/K}(\varphi^{\mathrm{AS}}_{L/K}(t))<\tilde{t}<t.\] Then 
	\[\operatorname{Im}(G_{L, \, \log}^{\tilde{t}+}\to G_K)
	\subset G_{K, \, \log}^{s+} 
	\]
	for every $s\leq\varphi_{L/K}^{\mathrm{AS}}(t)$. Thus 
	\[
	\varphi_{L/K}^{\mathrm{AS}}(\tilde{t})\geq \varphi_{L/K}^{\mathrm{AS}}(t).
	\]
	Since $\varphi_{L/K}^{\mathrm{AS}}$ is clearly increasing and $t>\tilde{t}$, we get 
	\[
	\varphi_{L/K}^{\mathrm{AS}}(\tilde{t})= \varphi_{L/K}^{\mathrm{AS}}(t),
	\]
	which contradicts the injectivity assumption. Therefore 
	\[
	t=\psi^{\mathrm{AS}}_{L/K}(\varphi^{\mathrm{AS}}_{L/K}(t))
	\]
	for every $t\geq 0$ and we conclude that  $\psi^{\mathrm{AS}}_{L/K}(t)$  is the left inverse of 
	$\varphi^{\mathrm{AS}}_{L/K}(t)$. 
\end{proof}

These functions satisfy  formulas similar to those satisfied by the classical $\varphi$ and $\psi$-functions, as we can see from the following lemma. 
\begin{lemma} \label{lemma:psitame}
	Let $K'$ be a finite Galois extension of $K$ that is tamely ramified and $L'=LK'$. Then 
	\begin{eqnarray*}
		\varphi^{\mathrm{ab}}_{L'/K'}(e(L'/L)t)= e(K'/K) \varphi^{\mathrm{ab}}_{L/K}(t), \\
		\psi^{\mathrm{ab}}_{L'/K'}(e(K'/K)t)= e(L'/L) \psi^{\mathrm{ab}}_{L/K}(t), \\
		\varphi^{\mathrm{AS}}_{L'/K'}(e(L'/L)t)= e(K'/K) \varphi^{\mathrm{AS}}_{L/K}(t), \\
		\psi^{\mathrm{AS}}_{L'/K'}(e(K'/K)t)= e(L'/L) \psi^{\mathrm{AS}}_{L/K}(t).
	\end{eqnarray*}
\end{lemma}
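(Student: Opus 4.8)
The plan is to reduce to the case in which the residue field $k$ is algebraically closed and then compare the two sides of each equality term by term, using only the multiplicativity of ramification indices in towers together with standard tame base‑change properties. For the reduction: since $k$ is perfect, $\tilde K=\widehat{K_{\mathrm{ur}}}$ has algebraically closed residue field, $\widetilde{K'}=K'\tilde K$ is again a finite tame Galois extension of $\tilde K$ with $e(\widetilde{K'}/\tilde K)=e(K'/K)$, and $\widetilde{L'}=\tilde L\,\widetilde{K'}$ with $e(\widetilde{L'}/\tilde L)=e(L'/L)$; hence the assertion for $(L/K,K'/K)$ is literally the assertion for $(\tilde L/\tilde K,\widetilde{K'}/\tilde K)$, whose bottom residue field is algebraically closed. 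So assume from now on $k$ algebraically closed, so that every finite tame extension is totally ramified of degree prime to $p$, and the finite tame extensions of $K'$ are exactly the fields $K''$ with $K''\supseteq K'$ and $K''/K$ tame; write $\mathcal C'$ for this family and $\mathcal C$ for the family of all finite tame $K''/K$.

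For the abelian functions I would argue by rewriting the defining infima and suprema. For $K''\in\mathcal C'$ one has $L'K''=LK''$, $e(K''/K')=e(K''/K)/e(K'/K)$ and $e(L'K''/L')=e(LK''/L)/e(L'/L)$. Substituting these identities into the definition of $\varphi^{\mathrm{ab}}_{L'/K'}(e(L'/L)t)$ (resp.\ $\psi^{\mathrm{ab}}_{L'/K'}(e(K'/K)t)$) and replacing the running variable $s$ by $s/e(K'/K)$ (resp.\ $s/e(L'/L)$) — a bijection of $\Z_{(p)}$, since $e(K'/K)$ and $e(L'/L)$ are prime to $p$ — turns the defining inclusion, integrality conditions included, into exactly the one in the definition of $\varphi^{\mathrm{ab}}_{L/K}(t)$ (resp.\ $\psi^{\mathrm{ab}}_{L/K}(t)$), the only difference being that the universal quantifier now ranges over $\mathcal C'$ instead of over all of $\mathcal C$. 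It then remains to see this restriction is harmless, and for that I would prove that the inclusion condition is anti‑monotone in the auxiliary field: if $K'''\supseteq K''$ are finite tame over $K$ and the condition holds for $K'''$, then it holds for $K''$. This uses the well‑known fact that for a tamely ramified extension $F/E$ of complete discrete valuation fields one has $\sw\chi_F=e(F/E)\sw\chi$ for every $\chi\in H^1(E)$ — equivalently, $F_aH^1(E)$ is the exact preimage of $F_{e(F/E)a}H^1(F)$ under restriction — applied both to $K'''/K''$, to push a class of $F_{e(K''/K)s}H^1(K'')$ up into $F_{e(K'''/K)s}H^1(K''')$, and to $LK'''/LK''$, to pull the resulting membership in $F_{e(LK'''/L)t}H^1(LK''')$ back to $F_{e(LK''/L)t}H^1(LK'')$, together with commutativity of the restriction square relating $H^1(K'')$, $H^1(K''')$, $H^1(LK'')$ and $H^1(LK''')$. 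Since every $K''_0\in\mathcal C$ is contained in $K''_0K'\in\mathcal C'$, anti‑monotonicity shows that validity over all of $\mathcal C'$ forces validity over all of $\mathcal C$, which finishes the first two formulas.

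For the Abbes–Saito functions the argument is shorter, since there is no quantifier over auxiliary extensions. I would use tame base change for the logarithmic Abbes–Saito filtration, $G^{u+}_{F,\log}=G_F\cap G^{u/e(F/E)+}_{E,\log}$ for $F/E$ finite tame, together with the fact that for $t\geq 0$ the group $G^{t+}_{E,\log}$ is pro‑$p$ — it lies in the wild inertia — and hence is contained in $G_F$ for any finite tame Galois extension $F/E$, because then $G_F$ is normal in $G_E$ of index prime to $p$. Applying this to $F/E=L'/L$ gives $G^{e(L'/L)t+}_{L',\log}=G_{L'}\cap G^{t+}_{L,\log}=G^{t+}_{L,\log}$, and to $F/E=K'/K$ gives $G^{s+}_{K',\log}=G_{K'}\cap G^{s/e(K'/K)+}_{K,\log}$; moreover the image of $G^{t+}_{L,\log}$ under $G_L\to G_K$ automatically lands in $G_{K'}$, since $G^{t+}_{L,\log}\subseteq G_{L'}$ and $G_{L'}$ maps into $G_{K'}$. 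Feeding these identities into the definition of $\varphi^{\mathrm{AS}}_{L'/K'}(e(L'/L)t)$ and substituting $s\mapsto s/e(K'/K)$ rewrites the defining supremum as $e(K'/K)$ times the one defining $\varphi^{\mathrm{AS}}_{L/K}(t)$; the same rewriting with the roles of $\psi^{\mathrm{AS}}$ and $\varphi^{\mathrm{AS}}$ exchanged and $s\mapsto s/e(L'/L)$ gives the $\psi^{\mathrm{AS}}$ formula.

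I expect the main obstacle to be the anti‑monotonicity step for the abelian functions: one must run the base‑change square and the tame Swan‑conductor formula simultaneously along the two towers $K'''/K''$ and $LK'''/LK''$, taking care about the direction of each inequality. Everything else reduces to substituting the multiplicative relations among ramification indices and invoking standard tame base‑change statements; the integrality bookkeeping, though tedious, is routine.
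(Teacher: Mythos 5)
Your proposal is correct and follows essentially the same route as the paper, whose entire proof is ``Follows from the definitions'' plus the same four-line rewriting of the supremum that you give for $\varphi^{\mathrm{AS}}$. The one substantive point you add --- that after substitution the quantifier in the abelian case ranges only over tame extensions of $K'$, a proper subfamily of the tame extensions of $K$, so one must check via the anti-monotonicity/tame base-change argument that this does not change the infimum or supremum --- is a real gap in the paper's one-line proof, and your treatment of it (using $\sw\chi_F=e(F/E)\sw\chi$ for tame $F/E$ along both towers) is the right way to close it.
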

\begin{proof}
	Follows from the definitions. For example, 
	\begin{align*}
	\varphi^{\mathrm{AS}}_{L'/K'}(e(L'/L)t) &=  \sup \left\lbrace s\in \R :
	\operatorname{Im}(G_{L, \, \log}^{e(L'/L)t+}\to G_K')\subset G_{K', \, \log}^{s+}     \right\rbrace\\
	&=  \sup \left\lbrace s\in \R :
	\operatorname{Im}(G_{L, \, \log}^{t+}\to G_K)\subset G_{K, \, \log}^{\frac{s}{e(K'/K)}+}     \right\rbrace\\
	&= e(K'/K) \sup \left\lbrace s\in \R :
	\operatorname{Im}(G_{L, \, \log}^{t+}\to G_K)\subset G_{K, \, \log}^{s+}     \right\rbrace\\
	&= e(K'/K) \varphi^{\mathrm{AS}}_{L/K}(t).\qedhere
	\end{align*}
\end{proof}

We relate this section with the rest of our paper. The main results that we proved in the previous sections are, in reality, results about $\psi^{\mathrm{ab}}_{L/K}$. 
More precisely, we have the following theorem: 
\begin{theorem} \label{main thm psi}
	Let $L/K$ be an 
	extension of complete discrete valuation fields. Assume that $K$ has perfect residue field of characteristic $p>0$. Let $t\in\R_{\geq 0}$ be such that
	\[\begin{cases}
	t\geq \dfrac{2e_K}{p-1}+\dfrac{1}{e(L/K)}+\left\lceil\dfrac{\delta_\tor(L/K)}{e(L/K)}\right\rceil & \text{if $K$ is of characteristic $0$,} \\[.5cm] t> \dfrac{p}{p-1} \dfrac{\delta_\tor(L/K)}{e(L/K)} & \text{if $K$ is of characteristic $p$.}
	\end{cases}\]
	Then 
	\[\psi^{\mathrm{ab}}_{L/K}(t)= e(L/K)t-\delta_{\tor}(L/K). \]
\end{theorem}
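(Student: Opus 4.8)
The plan is to deduce the theorem directly from Theorems \ref{mainpositive} and \ref{main mixed} by unwinding the definition of $\psi^{\mathrm{ab}}_{L/K}$. Two preliminary reductions are in order. First, since $\psi^{\mathrm{ab}}_{L/K}=\psi^{\mathrm{ab}}_{\tilde L/\tilde K}$ by definition, with $\tilde K=\widehat{K_{\mathrm{ur}}}$ and $\tilde L=\widehat{LK_{\mathrm{ur}}}$, and since $\tilde K/K$ and $\tilde L/L$ are unramified, one has $e(\tilde L/\tilde K)=e(L/K)$, $e_{\tilde K}=e_K$, and $\hat\Omega^1_{\OO_{\tilde L}/\OO_{\tilde K}}(\log)\cong\OO_{\tilde L}\otimes_{\OO_L}\hat\Omega^1_{\OO_L/\OO_K}(\log)$, so $\delta_\tor(\tilde L/\tilde K)=\delta_\tor(L/K)$; thus the hypothesis on $t$ is unchanged and we may assume $k$ algebraically closed. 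Second, since $\psi^{\mathrm{ab}}_{L/K}$ is increasing and is defined on $\R_{\geq 0}$ as the supremum of its values on $\Z_{(p)}$, it suffices to prove the formula for $t=s\in\Z_{(p)}$ in the stated range; the passage to real $t$ then follows from the density of $\Z_{(p)}$ in $\R$, the monotonicity of $\psi^{\mathrm{ab}}_{L/K}$, and the continuity of $s\mapsto e(L/K)s-\delta_\tor(L/K)$.

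Now fix $s\in\Z_{(p)}$ in the range. Unwinding the definition, I must show that for every finite tame extension $K'/K$ with $e(K'/K)s\in\Z$ and $e(LK'/L)\bigl(e(L/K)s-\delta_\tor(L/K)\bigr)\in\Z$ one has
\[
\operatorname{Im}\bigl(F_{e(K'/K)s}H^1(K')\to H^1(LK')\bigr)\subset F_{e(LK'/L)(e(L/K)s-\delta_\tor(L/K))}H^1(LK'),
\]
and that this fails for every strictly smaller index. The key computation is that tame base change transforms the relevant invariants predictably: since $k$ is algebraically closed we may write $K'=K(\pi_K^{1/m})$ with $p\nmid m$, the extension $LK'/L$ is tamely ramified with the same $p$-basis as $L$, hence $\hat\Omega^1_{\OO_{LK'}}(\log)\cong\OO_{LK'}\otimes_{\OO_L}\hat\Omega^1_{\OO_L}(\log)$, and therefore
\[
e(LK'/K')=\frac{e(LK'/L)\,e(L/K)}{e(K'/K)},\qquad \delta_\tor(LK'/K')=e(LK'/L)\,\delta_\tor(L/K).
\]
Substituting these into the bound on $s$ shows that $e(K'/K)s$ exceeds (resp.\ is $\geq$) the threshold of Theorem \ref{mainpositive} (resp.\ Theorem \ref{main mixed}) for the extension $LK'/K'$, and that $e(LK'/L)\bigl(e(L/K)s-\delta_\tor(L/K)\bigr)=e(LK'/K')\cdot e(K'/K)s-\delta_\tor(LK'/K')$.

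For the inclusion, take $\chi\in F_{e(K'/K)s}H^1(K')$. If $\sw\chi$ exceeds the threshold for $LK'/K'$, then Theorem \ref{mainpositive} or \ref{main mixed} applied to $LK'/K'$ gives $\sw\chi_{LK'}=e(LK'/K')\sw\chi-\delta_\tor(LK'/K')$, which by monotonicity in $\sw\chi$ is at most $e(LK'/L)\bigl(e(L/K)s-\delta_\tor(L/K)\bigr)$. If $\sw\chi$ lies below the threshold, I would pick $\chi''\in H^1(K')$ with $\sw\chi''=e(K'/K)s$ (possible since the residue field of $K'$ is algebraically closed, using an Artin--Schreier--Witt character of the appropriate length, and enlarging the coefficient group if necessary), so that $\sw(\chi+\chi'')=e(K'/K)s$; applying the main result to both $\chi+\chi''$ and $\chi''$ gives $\sw(\chi+\chi'')_{LK'}=\sw\chi''_{LK'}=e(LK'/L)\bigl(e(L/K)s-\delta_\tor(L/K)\bigr)$, and since the Swan conductor of a sum equals the larger of the two when they differ, necessarily $\sw\chi_{LK'}\leq e(LK'/L)\bigl(e(L/K)s-\delta_\tor(L/K)\bigr)$. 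This yields $\psi^{\mathrm{ab}}_{L/K}(s)\leq e(L/K)s-\delta_\tor(L/K)$. For the reverse inequality, choose $K'$ with $e(K'/K)$ divisible by the denominator of $s$ and with the second integrality condition satisfied, and pick $\chi\in H^1(K')$ with $\sw\chi=e(K'/K)s$; Theorem \ref{mainpositive}/\ref{main mixed} for $LK'/K'$ then gives $\sw\chi_{LK'}=e(LK'/L)\bigl(e(L/K)s-\delta_\tor(L/K)\bigr)$, so $\chi_{LK'}\notin F_{e(LK'/L)S}H^1(LK')$ for every $S<e(L/K)s-\delta_\tor(L/K)$, whence $\psi^{\mathrm{ab}}_{L/K}(s)\geq e(L/K)s-\delta_\tor(L/K)$. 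Combining gives the formula on $\Z_{(p)}$, and the first paragraph extends it to all admissible $t$.

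I expect the main obstacle to be the bookkeeping of the second paragraph: one must establish the transformation law $\delta_\tor(LK'/K')=e(LK'/L)\,\delta_\tor(L/K)$ — which rests on analyzing $\hat\Omega^1_{\OO_{LK'}}(\log)$ under tame base change — precisely enough that the bound on $s$ is transported exactly onto the thresholds of Theorems \ref{mainpositive} and \ref{main mixed} for $LK'/K'$, together with the handling of characters $\chi$ of small Swan conductor, for which those theorems do not apply directly and the auxiliary character $\chi''$ must be introduced.
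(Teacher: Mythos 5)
Your proposal is correct and follows essentially the same route as the paper: reduction to $t\in\Z_{(p)}$ and then to integer Swan conductors via a tame base change $K'/K$, the transformation laws for $e$, $e_K$ and $\delta_\tor$ under such base change, and Theorems \ref{mainpositive} and \ref{main mixed} applied to $LK'/K'$ (the paper packages the descent step as Lemma \ref{lemma:psitame} rather than unwinding the definition for each $K'$, but that lemma is itself immediate from the definitions). You are in fact more explicit than the paper on two points it leaves implicit, namely the treatment of characters whose Swan conductor falls below the threshold, via the auxiliary character $\chi''$, and the existence of characters of prescribed Swan conductor needed for the lower bound.
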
 
\begin{proof}
	Write 
	\[T(L/K)= \dfrac{2e_K}{p-1}+\dfrac{1}{e(L/K)}+\left\lceil\dfrac{\delta_\tor(L/K)}{e(L/K)}\right\rceil \]
	when $K$ is of characteristic $0$, and 
	\[T(L/K)=\dfrac{p}{p-1} \dfrac{\delta_\tor(L/K)}{e(L/K)} \]
	when $K$ is of characteristic $p$. 
	Let $t\in\R_{\geq 0}$ be such that $t\geq T(L/K)$ if $K$ is of characteristic $0$ and $t>T(L/K)$ if $K$ is of characteristic $p$.
	
	If $t\in \Z$, it follows from Theorems \ref{mainpositive} and \ref{main mixed} that
	\[\psi^{\mathrm{ab}}_{L/K}(t)= e(L/K)t-\delta_{\tor}(L/K). \] 
	If $t\in\Z_{(p)}$, take a finite Galois extension $K'/K$ that is tamely ramified and such that  $e(K'/K)t\in \Z$. Observe that, if $K$ is of characteristic $0$,
	\begin{align*}
	e(K'/K)T(L/K) &= \dfrac{2e_{K'}}{p-1}+\dfrac{e(L'/L)}{e(L'/K')}+ e(K'/K)\left\lceil\dfrac{\delta_\tor(L/K)}{e(L/K)}\right\rceil \\
	& \geq  \dfrac{2e_{K'}}{p-1}+\dfrac{e(L'/L)}{e(L'/K')}+ \left\lceil\dfrac{e(L'/L)\delta_\tor(L/K)}{e(L'/K')}\right\rceil \\
	& \geq  \dfrac{2e_{K'}}{p-1}+\dfrac{1}{e(L'/K')}+ \left\lceil\dfrac{\delta_\tor(L'/K')}{e(L'/K')}\right\rceil = T(L'/K').
	\end{align*} 
	Similarly, if $K$ is of characteristic $p$, 
	\begin{align*}
	e(K'/K)T(L/K) &= \dfrac{p}{p-1} \dfrac{e(K'/K)\delta_\tor(L/K)}{e(L/K)}  \\
	& = \dfrac{p}{p-1} \dfrac{e(L'/L)\delta_\tor(L/K)}{e(L'/K')}  \\
	& = \dfrac{p}{p-1} \dfrac{\delta_\tor(L'/K')}{e(L'/K')} = T(L'/K').
	\end{align*} 
	Then we have $e(K'/K)t\geq T(L'/K')$ if $K$ is of characteristic $0$ and $e(K'/K)t > T(L'/K')$ if $K$ is of characteristic $p$. It follows that 
	\begin{align*}
	\psi^{\mathrm{ab}}_{L'/K'}(e(K'/K)t) &= e(L'/K')e(K'/K)t-\delta_{\tor}(L'/K') \\
	&= e(L'/L)e(L/K) t - e(L'/L)\delta_\tor(L/K). 
	\end{align*} 
	From Lemma \ref{lemma:psitame}, we conclude that 
	\[\psi^{\mathrm{ab}}_{L/K}(t)= \dfrac{\psi^{\mathrm{ab}}_{L'/K'}(e(K'/K)t)}{e(L'/L)}= e(L/K)t-\delta_{\tor}(L/K). \]	
	The result then follows from the definition of $\psi^{\mathrm{ab}}_{L/K}$. 
\end{proof}

In the classical case, the functions we defined in fact coincide with the classical $\varphi$ and $\psi$-functions, as is shown in the following theorem.
\begin{theorem} \label{thm:psi}
	If $L/K$ is a finite Galois extension and $k$ is perfect, we have\[\psi_{L/K}=\psi^{\mathrm{ab}}_{L/K}=\psi^{\mathrm{AS}}_{L/K}. \]
\end{theorem}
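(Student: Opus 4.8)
The plan is to reduce to the case where the residue field $k$ is algebraically closed and then to show separately that $\psi^{\mathrm{ab}}_{L/K}=\psi_{L/K}$ and $\psi^{\mathrm{AS}}_{L/K}=\psi_{L/K}$. For the reduction, recall that by definition $\psi^{\mathrm{ab}}_{L/K}=\psi^{\mathrm{ab}}_{\tilde L/\tilde K}$ and $\psi^{\mathrm{AS}}_{L/K}=\psi^{\mathrm{AS}}_{\tilde L/\tilde K}$, where $\tilde K=\widehat{K_{\mathrm{ur}}}$ and $\tilde L=\widehat{LK_{\mathrm{ur}}}$; since the classical Herbrand function depends only on the inertia subgroup of $\mathrm{Gal}(L/K)$, it too is unchanged by completed unramified base change, so $\psi_{L/K}=\psi_{\tilde L/\tilde K}$. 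Hence we may and do assume $k$ algebraically closed.

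\textbf{The abelian function.} Since $k$ is perfect, Kato's Swan conductor of an abelian character coincides with the classical one, so $F_\bullet H^1$ is the classical ramification filtration. Fix $t\in\Z_{(p)}$. For a finite tame $K'/K$ and $L'=LK'$, the classical Herbrand theorem (the relation $H_w=H\cap G_w$ in the lower numbering, applied to $G_{L'}\subset G_{K'}$, together with the transitivity $\varphi_{M/K'}=\varphi_{L'/K'}\circ\varphi_{M/L'}$) yields $G^{v}_{L'}=G_{L'}\cap G^{\varphi_{L'/K'}(v)}_{K'}$, and therefore for every $\chi\in H^1(K')$,
\[
\sw_{L'}(\chi_{L'})\le\psi_{L'/K'}(\sw_{K'}\chi).
\]
Combined with the behaviour of the classical $\psi$-function under tame base change, $\psi_{L'/K'}(e(K'/K)t)=e(L'/L)\,\psi_{L/K}(t)$, this shows that $s=\psi_{L/K}(t)$ lies in the set defining $\psi^{\mathrm{ab}}_{L/K}(t)$, so $\psi^{\mathrm{ab}}_{L/K}(t)\le\psi_{L/K}(t)$. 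Conversely, given $s<\psi_{L/K}(t)$, choose a finite tame $K'/K$ with $e(K'/K)$ large (and divisible enough that $e(K'/K)t$ is an attainable Swan conductor and $e(L'/L)s\in\Z$) and pick $\chi\in H^1(K')$ with $\sw_{K'}\chi\le e(K'/K)t$, $\sw_{K'}\chi\gg0$, and $\sw_{K'}\chi$ close to $e(K'/K)t$; then by \eqref{eq:psi} applied over $K'$, $\sw_{L'}(\chi_{L'})=\psi_{L'/K'}(\sw_{K'}\chi)$, which is $>e(L'/L)s$ by monotonicity of $\psi_{L'/K'}$ and $\psi_{L/K}(t)>s$. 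So $s$ is not in the defining set, and $\psi^{\mathrm{ab}}_{L/K}(t)=\psi_{L/K}(t)$ for $t\in\Z_{(p)}$; the $\sup$-extension and the continuity of $\psi_{L/K}$ give equality for all $t\ge0$.

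\textbf{The Abbes--Saito function.} By the comparison theorem of Abbes--Saito, for perfect residue field the logarithmic upper ramification filtration $G^{\bullet+}_{K,\log}$ coincides with Serre's classical upper numbering filtration on $G_K$, and it is compatible with tame base change. The logarithmic form of Herbrand's identity reads $G^{s+}_{L,\log}=G_L\cap G^{\varphi_{L/K}(s)+}_{K,\log}$; taking $s=\psi_{L/K}(t)$, so that $\varphi_{L/K}(s)=t$, gives $G^{\psi_{L/K}(t)+}_{L,\log}=G_L\cap G^{t+}_{K,\log}\subset G^{t+}_{K,\log}$, whence $\psi^{\mathrm{AS}}_{L/K}(t)\le\psi_{L/K}(t)$. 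For the reverse inequality, transport the character produced above: $\chi_{L'}$ is nontrivial on $G^{e(L'/L)s+}_{L',\log}$ while $\ker\chi_{L'}\supseteq G_{L'}\cap G^{e(K'/K)t+}_{K',\log}$, so $G^{e(L'/L)s+}_{L',\log}\not\subset G^{e(K'/K)t+}_{K',\log}$; by compatibility of $G^{\bullet+}_{\log}$ with the tame extensions $K'/K$ and $L'/L$ this forces $G^{s+}_{L,\log}\not\subset G^{t+}_{K,\log}$, so $\psi^{\mathrm{AS}}_{L/K}(t)\ge s$ for every $s<\psi_{L/K}(t)$. Hence $\psi^{\mathrm{AS}}_{L/K}=\psi_{L/K}$, and the proof is complete.

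The main obstacle is the minimality assertion in each case — that no value below $\psi_{L/K}(t)$ lies in the defining set — which is precisely where one must exhibit, below the expected level, a character (respectively a Galois element) detecting genuine ramification; this is what forces the passage to a highly ramified tame base change together with the classical formula \eqref{eq:psi}. A secondary, more bookkeeping-type, point is to align the ``$+$'' and shift conventions of the Abbes--Saito logarithmic filtration with Serre's classical numbering so that Herbrand's identity applies verbatim.
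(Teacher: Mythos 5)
Your overall architecture matches the paper's: reduce to algebraically closed residue field, prove the upper bound $\psi^{\bullet}_{L/K}\le\psi_{L/K}$ via the Herbrand identity and tame base change, and then establish minimality by exhibiting a witness below the expected level. The upper-bound halves of your argument are fine. The problem is in the minimality step, which you yourself identify as the main obstacle but do not actually resolve.

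Your witness is a character $\chi$ over a highly ramified tame base change $K'$ with $\sw_{K'}\chi\le e(K'/K)t$ and $\sw_{K'}\chi\gg 0$, to which you apply the classical formula $\sw\chi_{L'}=\psi_{L'/K'}(\sw\chi)$ (equation (1.1) of the paper). But that formula holds only when $\sw_{K'}\chi$ exceeds (roughly) the largest upper ramification break of $L'/K'$ — otherwise $\chi$ could, for instance, cut out a subextension of $L'/K'$ and die entirely upon restriction. Under tame base change the largest break of $L'/K'$ is $e(K'/K)$ times that of $L/K$, i.e.\ it scales at exactly the same rate as your ceiling $e(K'/K)t$. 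Hence for any $t$ below the largest break $b$ of $L/K$, the two requirements ``$\sw_{K'}\chi\le e(K'/K)t$'' and ``$\sw_{K'}\chi$ large enough for (1.1)'' are incompatible for every choice of $K'$, and your construction produces no witness. The same gap propagates to your Abbes--Saito half, since you transport the same character. The theorem is claimed for all $t\ge 0$, so this is a genuine hole, not a boundary case. The paper closes it with a different input: Serre's geometric local class field theory for algebraically closed residue fields, which says that the maps
\[
\bigl(G_{L'}^{\mathrm{ab}}\bigr)^{\psi_{L'/K'}(s)}\big/\bigl(G_{L'}^{\mathrm{ab}}\bigr)^{\psi_{L'/K'}(s)+1}\longrightarrow\bigl(G_{K'}^{\mathrm{ab}}\bigr)^{s}\big/\bigl(G_{K'}^{\mathrm{ab}}\bigr)^{s+1}
\]
have nontrivial image for \emph{every} integer $s\ge 0$; dualizing produces, at every level $s$, a character of conductor $\le s$ whose restriction still ramifies to depth $\psi_{L'/K'}(s)$. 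You need this (or an equivalent non-degeneracy statement for the norm map on graded unit groups) to handle $t<b$; the asymptotic formula (1.1) alone does not suffice.
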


\begin{proof} 
	From the definitions of the functions, we can assume that $k$ is algebraically closed.	We shall first show that  $\psi^{\mathrm{AS}}_{L/K}=\psi_{L/K}$. 
	To show  $\varphi_{L/K}(t)\leq \varphi_{L/K}^{\mathrm{AS}}(t)$, just observe that, if $L'/L$ is a finite Galois extension over $K$, then
	\[
	G(L'/L)^t\!=\!G(L'/L)_{\psi_{L'/K}\circ \varphi_{L/K}(t)} \!\subset\!  G(L'/K)_{\psi_{L'/K}\circ \varphi_{L/K}(t)}\!=\!G(L'/K)^{\varphi_{L/K}(t)}.
	\]
	Since the Abbes-Saito filtration is left continuous with rational jumps, it remains to show that  $\varphi_{L/K}(t)\geq \varphi_{L/K}^{\mathrm{AS}}(t)$ for $t\in\Q_{\geq 0}$. Let $K'$ be a finite Galois extension of $K$ that is tame and write $L'=LK'$. Since $L'/L$ and $K'/K$ are tame extensions, we have
	\begin{align*}
	\varphi_{L'/K'}(e(L'/L)t)&= e(K'/K)\varphi_{K'/K}\circ\varphi_{L'/K'}\circ\psi_{L'/L}(t)\\&=   e(K'/K)\varphi_{L'/K}\circ\psi_{L'/L}(t)\\&=e(K'/K)\varphi_{L/K}(t).
	\end{align*}
	
	From Serre's local class field theory for fields with algebraically closed residue field  (\cite{Serre1961}), for every $s\in\Z_{\geq 0}$, the maps
	\[
	\dfrac{\left(G_{L'}^{\mathrm{ab}}\right)^{\psi_{L'/K'}(s)}}{\left(G_{L'}^{\mathrm{ab}}\right)^{\psi_{L'/K'}(s)+1}}\to\dfrac{\left(G_{K'}^{\mathrm{ab}}\right)^{s}}{\left(G_{K'}^{\mathrm{ab}}\right)^{s+1}} 
	\]
	have images that are of finite index and nontrivial. Taking $K'$ such that $e(K'/K)\varphi_{L/K}(t)$ is an integer and setting $s=\varphi_{L'/K'}(e(L'/L)t)$, we see that
	the image of $(G_{L'}^{\mathrm{ab}})^{e(L'/L)t}=(G_{L}^{\mathrm{ab}})^{t}$ is not contained in $(G_{K'}^{\mathrm{ab}})^{\varphi_{L'/K'}(e(L'/L)t)+1}$. 
	
	Since $(G_{K'}^{\mathrm{ab}})^{e(K'/K)\varphi_{L/K}(t)+1}=(G_{K}^{\mathrm{ab}})^{\varphi_{L/K}(t)+\frac{1}{e(K'/K)}}$, we have that the image of $(G_{L}^{\mathrm{ab}})^{t}$ is not contained in
	$(G_{K}^{\mathrm{ab}})^{\varphi_{L/K}(t)+\frac{1}{e(K'/K)}}$. We can choose tame extensions with $e(K'/K)$ arbitrarily large, so we have that $\varphi_{L/K}(t)\geq \varphi_{L/K}^{\mathrm{AS}}(t)$. Hence $\psi_{L/K}^{\mathrm{AS}}=\psi_{L/K}$.

	Now we shall prove that $\psi_{L/K}^{\mathrm{ab}}=\psi_{L/K}$. 
	Let $K'/K$ be a finite, separable extension of complete discrete valuation fields that is tamely ramified and such that $e(LK'/L)t$ and $e(K'/K)\varphi_{L/K}(t)$ are integers. Write $L'=LK'$.   
	Observe that, taking into account that $K'/K$ and $L'/L$ are tamely ramified, we have
	\begin{align*}
	\psi_{L'/K'}(e(K'/K)\varphi_{L/K}(t))&=\psi_{L'/K'}\circ\psi_{K'/K}\circ\varphi_{L/K}(t)=\psi_{L'/K}\circ \varphi_{L/K}(t)\\&= \psi_{L'/L}\circ \psi_{L/K} \circ \varphi_{L/K}(t)= \psi_{L'/L}(t) = e(L'/L)t.
	\end{align*}
	Let  \[\chi \in F_{e(K'/K)\varphi_{L/K}(t)}H^1(K').\]
	Denote by $\chi_{L'}$ its image in $H^1(L')$. Using the same argument as before we see that  $\chi_{L'}\in F_{e(L'/L)t}H^1(L')$,
	so $\varphi_{L/K}(t)\leq \varphi_{L/K}^{\mathrm{ab}}(t)$. Now, if $s= \varphi_{L/K}(t)+ \frac{1}{e(K'/K)}$, then
	\[
	F_{e(K'/K)s}H^1(K')= F_{e(K'/K)\varphi_{L/K}(t)+1}H^1(K').
	\] 
	Since $F_{e(L'/L)t}H^1(L')$ does not contain  the image of $F_{e(K'/K)\varphi_{L/K}(t)+1}H^1(K')$, we have that $s>\varphi_{L/K}^{\mathrm{ab}}(t)$. Since we can take extensions $K'/K$ with arbitrarily large $e(K'/K)$, we get that $\varphi_{L/K}=\varphi_{L/K}^{\mathrm{ab}}$. Thus $\psi_{L/K}^{\mathrm{ab}}=\psi_{L/K}$. 
\end{proof}

The properties we proved and Theorem \ref{thm:psi} give  evidence that the above defined functions $\psi^{\mathrm{ab}}_{L/K}$ and  $\psi^{\mathrm{AS}}_{L/K}$  are good generalizations of the classical $\psi$-function. We can conjecture:
\begin{conjecture} \label{conj:psi}
	Let $L/K$ be an extension of complete discrete valuation fields. Assume that $k$ is perfect of characteristic $p>0$. Then 
	\[ \psi^{\mathrm{ab}}_{L/K} = \psi^{\mathrm{AS}}_{L/K}.\]
\end{conjecture}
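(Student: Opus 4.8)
The plan is to reduce the conjecture to a single comparison statement: that over a complete discrete valuation field $F$ with residue field of characteristic $p$, Kato's ramification filtration on $H^{1}(F)$ is the one cut out by the Abbes--Saito logarithmic filtration on $G_{F}$, i.e.
\[
F_{n}H^{1}(F)=\{\chi\in H^{1}(F):\chi|_{G_{F,\log}^{n+}}=0\}\qquad\text{for all }n.
\]
Since both $\psi_{L/K}^{\mathrm{ab}}$ and $\psi_{L/K}^{\mathrm{AS}}$ are defined to be insensitive to the unramified base change $\widehat{K_{\mathrm{ur}}}/K$, one first reduces to the case in which $k$ is algebraically closed. For the tame extensions $K'/K$ occurring in the definitions the residue field is still perfect, so over each such $K'$ the displayed comparison is classical; the real input is its validity over the fields $LK'$, whose residue fields need not be perfect.

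Granting this, both functions can be rewritten in the same terms. On the one hand, for $\chi\in H^{1}(K')$ the condition $\chi_{LK'}\in F_{e(LK'/L)s}H^{1}(LK')$ becomes ``$\chi_{LK'}$ kills $G_{LK',\log}^{e(LK'/L)s\,+}$'', which by compatibility of the Abbes--Saito log filtration with the tame base change $LK'/L$ equals ``$\chi_{LK'}$ kills $G_{L,\log}^{s+}$''; similarly the hypothesis $\chi\in F_{e(K'/K)t}H^{1}(K')$ records that $\chi$ kills the restriction of $G_{K,\log}^{t+}$. Combining this with Lemma~\ref{lemma:psitame} and its Abbes--Saito analogue, one sees that the tame twists $K'$ in the definition of $\psi_{L/K}^{\mathrm{ab}}$ only serve to let the indices range over $\Z_{(p)}$ and then, by left continuity, over $\R$; one is left with the formula that $\psi_{L/K}^{\mathrm{ab}}(t)$ is the supremum of the Abbes--Saito (equivalently, Kato) Swan conductor of $\chi_{L}$ over all $\chi\in H^{1}(K)$ of Swan conductor $\leq t$. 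On the other hand, unwinding $\psi_{L/K}^{\mathrm{AS}}$ via Pontryagin duality applied to the closed normal subgroup $G_{K,\log}^{t+}\trianglelefteq G_{K}$ expresses $\psi_{L/K}^{\mathrm{AS}}(t)$ as the supremum of the same conductors over the \emph{same} characters --- but only once one knows that the inclusion $G_{L,\log}^{s+}\subset G_{K,\log}^{t+}$ is detected by characters of $G_{K}$ killing $G_{K,\log}^{t+}$. With that, the two suprema agree and the conjecture follows.

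Two points have to be overcome, and I expect the second to be the genuinely hard one. The \emph{first} is the comparison $F_{\bullet}H^{1}(F)=\{\chi:\chi|_{G_{F,\log}^{\bullet+}}=0\}$ over $F$ with imperfect residue field: in equal characteristic this should be assembled from the refined Swan conductor of Proposition~\ref{prop:besta} together with the comparison of the Matsuda and Abbes--Saito filtrations of Matsuda (\cite{matsuda1997swan}) and Yatagawa (\cite{yatagawa2016equality}), while in mixed characteristic one must invoke, and perhaps extend, the comparison results of Abbes--Saito (\cite{abbessaito}); doing this uniformly is nontrivial but seems within reach. The \emph{second}, and the crux for transcendental $L/K$, is that $\psi_{L/K}^{\mathrm{AS}}$ is defined through the full non-abelian groups $G_{L},G_{K}$, so one must show that $G_{L,\log}^{s+}\subset G_{K,\log}^{t+}$ is governed by abelian data. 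For finite $L/K$ this follows from Hasse--Arf-type integrality (and is implicit in the proof of Theorem~\ref{thm:psi}), but in the transcendental case no such argument is available. A plausible workaround is to prove directly, by carrying the differential-form computation of Lemma~\ref{differential} through the Abbes--Saito characteristic form, that $\psi_{L/K}^{\mathrm{AS}}(t)=e(L/K)t-\delta_{\tor}(L/K)$ in the range of Theorem~\ref{main thm psi}, and then to propagate this to all $t$ using the functoriality, monotonicity, and left continuity of both $\psi$-functions together with their identical behaviour under tame base change; matching them on a cofinal ray and along all tame twists should force equality everywhere.
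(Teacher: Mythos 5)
The statement you address is posed in the paper only as Conjecture \ref{conj:psi}: the paper gives no proof of it, the supporting evidence being Theorem \ref{thm:psi} (the finite Galois case with perfect residue fields) and the large-$t$ formula for $\psi^{\mathrm{ab}}_{L/K}$ in Theorem \ref{main thm psi}. Judged on its own, your proposal is a programme rather than a proof: it reduces the conjecture to two statements that you yourself flag as open, and each is of essentially the same order of difficulty as the conjecture itself. The first is the identification $F_nH^1(F)=\{\chi\in H^1(F):\chi|_{G_{F,\log}^{n+}}=0\}$ over complete discrete valuation fields $F$ with \emph{imperfect} residue field: in equal characteristic this might be assembled from Matsuda and Yatagawa, but in mixed characteristic the comparison of Kato's filtration with the Abbes--Saito logarithmic filtration is not available in the generality needed, and nothing in the paper supplies it (the refined Swan conductor of Proposition \ref{prop:besta} is a purely equal-characteristic tool). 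The second, which you rightly call the crux, is that $\psi^{\mathrm{AS}}_{L/K}$ is defined through containments $\operatorname{Im}(G_{L,\log}^{s+})\subset G_{K,\log}^{t+}$ of subgroups of the full Galois group, while rank-one characters of $G_K$ only detect the image of $\operatorname{Im}(G_{L,\log}^{s+})$ in the abelianization of $G_K/G_{K,\log}^{t+}$, which is not abelian; so the dual description you invoke is a priori strictly weaker, and no Hasse--Arf-type statement bridging this is known for transcendental $L/K$. Your text offers no argument for either point, so there is a genuine gap at exactly the places where the conjecture is hard.

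The proposed workaround does not close it either. Even granting that one could prove $\psi^{\mathrm{AS}}_{L/K}(t)=e(L/K)t-\delta_{\tor}(L/K)$ for $t$ above the threshold of Theorem \ref{main thm psi} (itself a substantial project, amounting to an Abbes--Saito analogue of Sections \ref{sec:2}--\ref{sec gen mixed}), the propagation step fails: under a tame twist $K'/K$ the threshold $T(L/K)$ scales by essentially the same factor $e(K'/K)$ that rescales the variable --- this is precisely the computation in the proof of Theorem \ref{main thm psi} --- so tame base change never gives access to values of either $\psi$-function below the threshold, and monotonicity together with left continuity cannot recover a function on $[0,T)$ from its values on $[T,\infty)$. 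Hence agreement of $\psi^{\mathrm{ab}}_{L/K}$ and $\psi^{\mathrm{AS}}_{L/K}$ on a cofinal ray, plus identical tame equivariance (Lemma \ref{lemma:psitame}), does not force equality everywhere; the conjecture is exactly about the remaining range, where neither your reduction nor the paper provides an argument.
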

\begin{conjecture}
		Let $L/K$ be an extension of complete discrete valuation fields. Assume that $k$ is perfect of characteristic $p>0$. Then $\psi^{\mathrm{ab}}_{L/K}$ and $\psi^{\mathrm{AS}}_{L/K}$ are continuous, piecewise linear, increasing, and convex.
\end{conjecture}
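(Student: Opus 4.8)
The plan is to prove the two equalities $\psi^{\mathrm{AS}}_{L/K}=\psi_{L/K}$ and $\psi^{\mathrm{ab}}_{L/K}=\psi_{L/K}$ by instead identifying the inverse functions, i.e. by showing $\varphi^{\mathrm{AS}}_{L/K}=\varphi_{L/K}$ and $\varphi^{\mathrm{ab}}_{L/K}=\varphi_{L/K}$. Since classically $\varphi_{L/K}$ is a continuous increasing bijection of $\R_{\geq 0}$ with inverse $\psi_{L/K}$, and $\varphi^{\mathrm{AS}}_{L/K}$, $\varphi^{\mathrm{ab}}_{L/K}$ are then injective, the two preceding propositions identify $\psi^{\mathrm{AS}}_{L/K}$ and $\psi^{\mathrm{ab}}_{L/K}$ with $\psi_{L/K}$. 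By the very definition of the four functions in the non-perfect-residue-field case (passage to $\tilde{K}=\widehat{K_{\mathrm{ur}}}$), we may assume throughout that $k$ is algebraically closed.

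For $\varphi^{\mathrm{AS}}_{L/K}=\varphi_{L/K}$ I would first prove $\varphi_{L/K}(t)\leq\varphi^{\mathrm{AS}}_{L/K}(t)$: for a finite extension $L'/L$ that is also Galois over $K$, the compatibility of the lower-numbering filtration with subgroups together with Herbrand's theorem gives $G(L'/L)^{t}=G(L'/L)_{\psi_{L'/K}\circ\varphi_{L/K}(t)}\subset G(L'/K)_{\psi_{L'/K}\circ\varphi_{L/K}(t)}=G(L'/K)^{\varphi_{L/K}(t)}$, and taking the limit over such $L'$ yields $\operatorname{Im}(G_{L,\,\log}^{t+}\to G_K)\subset G_{K,\,\log}^{\varphi_{L/K}(t)+}$. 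For the reverse inequality it suffices (by left continuity of the Abbes--Saito filtration with rational jumps) to treat $t\in\Q_{\geq 0}$; there I would pass to a tame finite Galois $K'/K$ and set $L'=LK'$, so that the classical tame-extension formulas give $\varphi_{L'/K'}(e(L'/L)t)=e(K'/K)\varphi_{L/K}(t)$. Then Serre's local class field theory over an algebraically closed residue field --- the statement that the transfer map on graded pieces $(G^{\mathrm{ab}}_{L'})^{\psi_{L'/K'}(s)}/(G^{\mathrm{ab}}_{L'})^{\psi_{L'/K'}(s)+1}\to(G^{\mathrm{ab}}_{K'})^{s}/(G^{\mathrm{ab}}_{K'})^{s+1}$ has nontrivial image of finite index --- applied with $s=\varphi_{L'/K'}(e(L'/L)t)$ shows that the image of $(G_L^{\mathrm{ab}})^{t}=(G_{L'}^{\mathrm{ab}})^{e(L'/L)t}$ is not contained in $(G_{K'}^{\mathrm{ab}})^{s+1}=(G_K^{\mathrm{ab}})^{\varphi_{L/K}(t)+1/e(K'/K)}$; choosing $e(K'/K)$ arbitrarily large forces $\varphi_{L/K}(t)\geq\varphi^{\mathrm{AS}}_{L/K}(t)$.

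For $\varphi^{\mathrm{ab}}_{L/K}=\varphi_{L/K}$ the argument is parallel, now with the filtration $F_\bullet H^1$ replacing the Abbes--Saito filtration. Picking a tame Galois $K'/K$ with $e(L'/L)t$ and $e(K'/K)\varphi_{L/K}(t)$ both integral and $L'=LK'$, the chain of classical tame-extension identities $\psi_{L'/K'}(e(K'/K)\varphi_{L/K}(t))=\psi_{L'/L}\circ\psi_{L/K}\circ\varphi_{L/K}(t)=e(L'/L)t$ shows that every $\chi\in F_{e(K'/K)\varphi_{L/K}(t)}H^1(K')$ has image in $F_{e(L'/L)t}H^1(L')$, whence $\varphi_{L/K}(t)\leq\varphi^{\mathrm{ab}}_{L/K}(t)$; conversely, using that $F_{e(L'/L)t}H^1(L')$ does not contain the image of $F_{e(K'/K)\varphi_{L/K}(t)+1}H^1(K')$ --- which is again classical local class field theory, equivalently the relation $\sw\chi_{L'}=\psi_{L'/K'}(\sw\chi)$ for $\sw\chi\gg 0$ --- we get $\varphi_{L/K}(t)+1/e(K'/K)>\varphi^{\mathrm{ab}}_{L/K}(t)$, and letting $e(K'/K)\to\infty$ completes the proof.

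The main obstacle I expect is the two reverse inequalities: extracting from classical local class field theory over an algebraically closed residue field the precise non-containment statements above --- namely that the relevant transfer (respectively restriction) maps on graded pieces are genuinely nonzero of finite index, so that the classical jumps are still ``seen'' after a tame base change --- while keeping the tame-extension bookkeeping for $\varphi$ and $\psi$ (and the divisibility conditions on $e(K'/K)$) consistent. Once these are in hand, the passage to the limit over tame $K'/K$ is routine.
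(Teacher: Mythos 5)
There is a fundamental mismatch between what you prove and what the statement asserts. What you have written out is, almost step for step, the paper's own proof of Theorem \ref{thm:psi}, namely that $\psi_{L/K}=\psi^{\mathrm{ab}}_{L/K}=\psi^{\mathrm{AS}}_{L/K}$ when $L/K$ is a \emph{finite Galois} extension with perfect residue field $k$ (and hence perfect residue field of $L$ as well). That argument is correct in that setting, but it is not a proof of the statement in question, which is a \emph{conjecture} about an arbitrary extension $L/K$ of complete discrete valuation fields with $k$ perfect: here $L/K$ may be transcendental and the residue field of $L$ may be imperfect. In that generality there is no classical $\varphi_{L/K}$ or $\psi_{L/K}$ to identify your functions with, no lower-numbering filtration, and no Herbrand theorem; the inclusion $G(L'/L)^{t}=G(L'/L)_{\psi_{L'/K}\circ\varphi_{L/K}(t)}\subset G(L'/K)_{\psi_{L'/K}\circ\varphi_{L/K}(t)}$ that launches your argument cannot even be written down. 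So the entire strategy of ``reduce to the classical function and read off its known properties'' is unavailable precisely in the cases the conjecture is about.

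The paper deliberately leaves this as an open conjecture: the only information it establishes about $\psi^{\mathrm{ab}}_{L/K}$ in the general case is Theorem \ref{main thm psi}, which computes $\psi^{\mathrm{ab}}_{L/K}(t)=e(L/K)t-\delta_{\tor}(L/K)$ only for $t$ above an explicit threshold (so the function is affine, hence continuous, increasing and convex, on that ray), and Theorem \ref{thm:psi}, which settles the finite Galois case. Nothing is proved about the behavior of $\psi^{\mathrm{ab}}_{L/K}$ or $\psi^{\mathrm{AS}}_{L/K}$ for small $t$ in the transcendental or imperfect-residue-field setting, and that is exactly where the conjectured continuity, piecewise linearity and convexity would have to be established. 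A genuine proof would need new input there --- for instance a finiteness statement for the jumps of Kato's filtration $F_\bullet H^1(LK')$ or of the Abbes--Saito filtration $G_{L,\log}^{s+}$ under the tame base changes appearing in the definitions --- none of which your proposal supplies.
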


\begin{ack}
	I would like to express my sincere gratitude to my advisor, Professor Kazuya Kato, for his kind advice and feedback, and for our many discussions which always enlighten me.
	
	I would also like to thank the anonymous referee for their careful reading, and for providing comments and suggestions that helped improve this paper.
\end{ack}

\bibliographystyle{amsplain}
\bibliography{bibfile}

\end{document}